\newtheorem{theorem}{Theorem}[section]
\newtheorem{proposition}[theorem]{Proposition}
\newtheorem{lemma}[theorem]{Lemma}
\newtheorem{definition}[theorem]{Definition}
\newtheorem{corollary}[theorem]{Corollary}
\newtheorem{remark}[theorem]{Remark}
\def\R{\mathbb{R}}
\def\d{\partial}
\def\bp{\begin{proof}}
\def\ep{\end{proof}}
\def\R{{\cal R}}
\newcommand{\tr}{{\rm tr \, }}
\def\R{\mathbb{R}}
\def\d{\partial}
\newcommand{\II}{\mathrm{II}}
\newcommand{\disp}{\displaystyle}
\newcommand{\loc}{\mathrm{loc}}
\newcommand{\metric}{\langle \, , \, \rangle}
\begin{document}

\title[Spacetime positive mass theorems]{Spacetime positive mass theorems for initial data sets with noncompact boundary}

\author{S\'{e}rgio Almaraz}
\address{Universidade Federal Fluminense (UFF), Instituto de Matem\'{a}tica, Campus do Gragoat\'a\\
Rua Prof. Marcos Waldemar de Freitas, s/n, bloco H, 24210-201, Niter\'{o}i, RJ, Brazil.}
              \email{sergio.m.almaraz@gmail.com}
\author{Levi Lopes de Lima}
\address{Universidade Federal do Cear\'a (UFC),
Departamento de Matem\'{a}tica, Campus do Pici, Av. Humberto Monte, s/n, Bloco 914, 60455-760,
Fortaleza, CE, Brazil.}
\email{levi@mat.ufc.br}
\author{Luciano Mari}
\address{Universit\'a degli Studi di Torino, Dipartimento di Matematica ``G. Peano", Via Carlo Alberto 10, 10123 Torino, Italy.}
\email{luciano.mari@unito.it}
\thanks{S. Almaraz has been partially suported by  CNPq/Brazil grant 
	309007/2016-0 and CAPES/Brazil grant 88881.169802/2018-01, and L. de Lima has been partially supported by CNPq/Brazil grant
	311258/2014-0. Both authors have been partially suported by FUNCAP/CNPq/PRONEX grant 00068.01.00/15. L. Mari is supported by the project SNS17\_B\_MARI by the Scuola Normale Superiore.}

\begin{abstract}
In this paper, we define an energy-momentum vector at the spatial infinity of either asymptotically flat or asymptotically hyperbolic initial data sets carrying a noncompact boundary. Under suitable dominant energy conditions imposed both on the interior and along the boundary, we prove the corresponding positive mass inequalities under the assumption that the underlying manifold is spin. 
\end{abstract}

\maketitle
\tableofcontents


\begin{abstract}
In this paper, we define an energy-momentum vector at the spatial infinity of either asymptotically flat or asymptotically hyperbolic initial data sets carrying a non-compact boundary. Under suitable dominant energy conditions (DECs) imposed both on the interior and along the boundary, we prove the corresponding positive mass inequalities under the assumption that the underlying manifold is spin. In the asymptotically flat case, we also prove a rigidity statement when the energy-momentum vector is light-like. 
Our treatment aims to underline both the common features and the differences between the asymptotically Euclidean and hyperbolic settings, especially regarding the boundary DECs.
\end{abstract}

\maketitle
\tableofcontents


\section{Introduction}

In General Relativity, positive mass theorems comprise the statement that, under suitable physically motivated energy conditions,  the total mass of an isolated gravitational system, as measured at its spatial infinity, is non-negative and vanishes only in case the corresponding initial data set propagates in time to generate the Minkowski space.  After the seminal contributions by Schoen-Yau \cite{SY1, SY2, SY3} and Witten \cite{Wi}, who covered various important cases, the subject has blossomed in a fascinating area of research; see  \cite{Bar,PT, BC, CM, XD, Ei, EHLS, SY4, Lo, HL} for a sample of relevant contributions in the asymptotically flat setting. 
More recently,  inspired by potential applications to the Yamabe problem on manifolds with boundary, a variant of the classical positive mass theorem for time-symmetric initial data sets carrying a non-compact boundary has been established in \cite{ABdL},  
under the assumption that the double of the underlying
manifold satisfies the standard (i.e. boundaryless) mass inequality. Hence, in view of the
recent progress due to Schoen-Yau \cite{SY4} and Lohkamp \cite{Lo}, the positive
mass theorem in [ABdL] actually holds in full generality. We also note that an alternative approach to the main result in \cite{ABdL},  based on the theory of free boundary minimal hypersurfaces and hence only suited for low dimensions, is presented in \cite{Ch}.   

Partly motivated by the so-called AdS/CFT correspondence in Quantum Gravity, 
there has been much interest in proving similar results in case the Minkowskian background is 
replaced by the anti-de Sitter spacetime. After preliminary contributions in \cite{M-O,AD}, the time-symmetric version has been established in \cite{Wa,CH} under the spin assumption. 
We also refer to \cite{Ma,CMT} for a treatment of the non-time-symmetric case, again in the spin context. Regarding
the not necessarily spin case, we should mention the results in \cite{ACG, CD, HJM}.
Notice that in this asymptotically hyperbolic setting, the time-symmetric spin case in the presence of a non-compact boundary appears in \cite{AdL}. As a consequence, a  rigidity result in the conformally compact Einstein (c.c.E.) setting was proved there, thus extending a celebrated achievement in \cite{AD}, which has interesting applications to the classical AdS/CFT correspondence. By its turn, the rigidity result in \cite[Theorem 1.3]{AdL}, which holds by merely assuming that the inner boundary is {\em mean convex},  	
may be of some interest in connection with recent developments involving the construction of a holograph dual of a conformal field theory defined on a manifold with boundary, the so-called AdS/BCFT correspondence \cite{T, CMG, AS}, where the problem of determining the structure of the moduli space of c.c.E. manifolds with a given conformal infinity  and having a {\em minimal} inner boundary  plays a key role.

The interesting applications of the purely Riemannian positive mass  theorems in  \cite{ABdL,AdL} mentioned above motivate their extension to the spacetime, non-time-symmetric case. The purpose of this paper is precisely to carry out this formulation in general and to establish the corresponding mass inequalities under the assumption that the manifold underlying the given initial data set is spin; see  Theorems \ref{main} and \ref{maintheohyp} below. 
For this, we adapt the well-known Witten's spinorial method which, in each case, provides a formula for the energy-momentum vector in terms of a spinor suitably determined by means of boundary conditions imposed both at infinity and along the non-compact boundary. In particular, since the computations in \cite{ABdL,AdL} are used in the present setting, our choice of boundary conditions along the non-compact boundary in each case matches those already considered there, namely, we employ MIT bag (respectively, chirality) conditions in the asymptotically flat (respectively, asymptotically hyperbolic) case; see also Remark \ref{disc:bound:cond} in this regard. 
We remark that  extensive discussions of these boundary conditions for Dirac-type operators and their Dirac Laplacians may be found in \cite{AE, Es, Ra, Gi, dL1,dL2}.
We also emphasize that a key step in our approach is the selection of  suitable dominant energy conditions along the non-compact boundary which constitute natural extensions of the mean convexity assumption adopted in \cite{ABdL,AdL}. In fact, the search for this kind of energy condition  was one of the motivations we had to pursue the investigations reported here. In any case, it is quite fortunate that these energy conditions and the boundary conditions on spinors mentioned above 
fit together so as to allow for a somewhat unified approach to our main results.

Although we have been able to establish positive mass inequalities in full generality for initial data sets whose underlying manifolds are spin, a natural question that arises is whether this topological assumption may be removed.
In the asymptotically flat case, one possible approach to this goal is to adapt, in the presence of the non-compact boundary, the classical technique based on MOTS (marginally outer trapped surfaces). Another promising  strategy is to proceed in the spirit of the time-symmetric case treated in \cite{ABdL} and  improve the asymptotics in order to apply the standard positive mass inequality to the ``double'' of the given initial data set. We hope to address those questions elsewhere.

Now we briefly describe the content of this paper. Our main results are Theorems \ref{main} and \ref{maintheohyp} which are proved in Sections \ref{proofmain} and \ref{secthyp}, respectively. 
These are rather straightforward consequences of the Witten-type formulae in Theorems \ref{genwit} and \ref{witt-type}, whose proofs make use of the material on spinors and the Dirac-Witten operator presented in Section \ref{spinors}. 
We point out that, although the spinor argument only provides rigidity assuming the vanishing of the energy-momentum vector, using a doubling argument we are able to reduce the general case to that one by applying the results in \cite{HL}.
Sections \ref{state} and \ref{enermom} are of an introductory nature, as they contain the asymptotic definition of  the  energy-momentum vectors and a proof that these objects are indeed geometric invariants of the given initial data set. We also include in Section \ref{state} a motivation for the adopted dominant energy conditions along the non-compact boundary which makes contact with the Hamiltonian formulation of General Relativity.

\section{Statement of results}\label{state}

Let $n\geq 3$ be an integer and consider  $(\overline M^{n+1},\overline g)$, an oriented and time-oriented $(n+1)$-dimensional Lorentzian manifold carrying a non-compact, timelike boundary $\overline \Sigma$. We assume that $\overline M$ carries a spacelike hypersurface $M$ with non-compact boundary $\Sigma=\overline \Sigma\cap M$. Also, we suppose that $M$ meets $\overline\Sigma$ {\em orthogonally} along $\Sigma$; see Remark \ref{orthoassrem} below.
Let $g=\overline g|_M$ be the induced metric and $h$ be the second fundamental form of the embedding $M\hookrightarrow\overline M$ with respect to the time-like, future directed unit normal vector field {$\bf n$} along $M$. 
As costumary, we assume that $\overline g$ is determined by extremizing the standard  Gibbons-Hawking action
\begin{equation}\label{gibbons}
\overline g\mapsto \int_{\overline M}\left(R_{\overline g}-2\Lambda+ \mathscr T\right)d\overline M+2\int_{\overline \Sigma}\left(H_{\overline g}+\mathscr S\right)d\overline \Sigma.
\end{equation}
Here, $R_{\overline g}$ is the scalar curvature of $\overline g$, $\Lambda\leq 0$ is the cosmological constant,  ${\II}_{\overline g}$ is the second fundamental form of $\overline{\Sigma}$ in the direction pointing towards $\overline{M}$, and ${H}_{\overline g} = \tr_{\overline{g}} {\II}_{\overline g}$ is its mean curvature. As usual, we have added to the purely gravitational action 
the stress-energy densities describing the non-gravitational contributions which are {\em independently} prescribed both in the interior of $\overline M$ ($\mathscr T$) and along the boundary $\overline \Sigma$ ($\mathscr S$).
In the following, we often consider an orthornormal frame $\{e_\alpha\}_{\alpha=0}^n$ along $M$ which is adapted to the embedding $M\hookrightarrow\overline M$ in the sense that $e_0={\bf  n}$. We work with the index ranges 
$$
0\leq \alpha,\beta,\cdots\leq n, \qquad 1\leq i,j,\cdots\leq n, \qquad 1 \leq A, B, \cdots \leq n-1, \quad 0\leq a,b,\cdots\leq n-1,
$$
and the components of the second fundamental form $h$ of $M$ in the frame $\{e_i\}$ are defined by
\[
h_{ij}   =\overline g ( \overline{\nabla}_{e_i} e_0, e_j),
\]
where $\overline \nabla$ is the Levi-Civita connection of $\overline g$. Along $\Sigma$, we also assume that the frame is adapted in the sense that $e_n=\varrho$, where $\varrho$ is the inward unit normal to $\Sigma$, so that $\{e_A\} \subset T\Sigma$.

In order to establish positive mass theorems, physical reasoning demands that the initial data set $(M,g,h,\Sigma)$ should satisfy suitable dominant energy conditions (DECs). In the interior of $M$, this is achieved in the usual manner, namely, we consider the {\em interior constraint map}
$$
\Psi_{\Lambda}(g,h)=2\left(\rho_{\Lambda}(g,h),J(g,h)\right),
$$ 
where 
\[
\rho_{\Lambda}(g,h)=\frac{1}{2}\left(R_g-2\Lambda-|h|_g^2+({\rm tr}_gh)^2\right), \quad J(g,h)={\rm div}_gh-d{\rm tr}_gh
\]
and $R_g$ is the scalar curvature of $g$. 

\begin{definition}\label{intdec}
	We say that $(M,g,h)$ satisfies the {\em interior DEC} if 
	\begin{equation}\label{intdec2}
	\rho_\Lambda\geq|J|
	\end{equation} 
	everywhere along $M$.
	\end{definition}

As we shall see below, prescribing DECs along $\Sigma$ is a subtler matter. 
In the time-symmetric case, which by definition means that $h=0$, the mass inequalities obtained in \cite{ABdL,AdL} confirm that mean convexity of $\Sigma$ (that is, $H_g\geq 0$, where $H_g$ is the mean curvature of $\Sigma\hookrightarrow M$ with respect to the inward pointing unit normal vector field $\varrho$) qualifies as the right boundary DEC. In analogy with (\ref{intdec2}), this clearly suggests that, in the non-time-symmetric case considered here, the appropriate boundary DEC should be expressed by a pointwise lower bound for $H_g$ in terms of the norm of a vector quantity constructed out of the geometry along $\Sigma$, which should vanish whenever $h=0$. However, a possible source of confusion in devising this condition is that the momentum component of the energy-momentum vector, appearing in the positive mass theorems presented below, possesses a manifestly distinct nature depending on whether it comes from asymptotically translational isometries tangent to the boundary if $\Lambda=0$, or asymptotically rotational isometries normal to the boundary if $\Lambda <0$; see Remark \ref{rem11} below.  Despite this difficulty, a reasonably unified approach may be achieved if, for the sake of motivation, we  appeal to the so-called Hamiltonian formulation of General Relativity. Recall that, in this setting, the spacetime $(\overline M,\overline g)$ is constructed by  infinitesimally deforming the initial data set $(M,g, h,\Sigma)$  in a transversal, time-like direction with speed $\partial_t=V{\bf n}+W^i{e_i}$, where  $V$ is the {lapse function} and $W$ is the {shift vector}. 
In terms of these quantities, and since $M$ is supposed to meet $\overline \Sigma$ orthogonally along 
$\Sigma$, the purely gravitational contribution $\mathscr{H}_{\rm grav}$ to the total Hamiltonian at each time slice is given by 
\begin{equation}\label{hamdens}
\frac{1}{2}\mathscr{H}_{\rm grav}(V,W)=\int_M\left(V\rho_\Lambda+W^iJ_i\right)dM
+\int_\Sigma \left(V H_g+W^i\left( \varrho\righthalfcup \pi\right)_i\right)d\Sigma, 
\end{equation} 
where $\pi:=h-({\rm tr}_gh)g$ is the {\rm conjugate momentum} (also known as the Newton tensor of $M\hookrightarrow \overline M$) and we assume for simplicity that $M$ is compact in order to avoid the appearance   of asymptotic terms in (\ref{hamdens}), which are not relevant for the present discussion. We refer to \cite{HH} for a direct derivation of this formula starting from the action (\ref{gibbons}); the original argument, which relies on the Hamilton-Jacobi method applied to (\ref{gibbons}), appears in \cite{BY}. We also mention that (\ref{hamdens}) may be derived in the framework of the formalism recently described in \cite{HW}, a sharpening of the celebrated Iyer-Wald  covariant phase space method \cite{IW} which properly takes into account the contributions coming from boundary terms.

Comparison of the interior and boundary integrands in (\ref{hamdens}) suggests the consideration of  the {\em boundary constraint map} 
\[
\Phi(g,h)=2(H_g,\varrho\righthalfcup \pi).
\]
The key observation now is that if we view $(V,W)$ as the infinitesimal generator of a symmetry yielding an energy-momentum charge, then the boundary integrand in (\ref{hamdens}) suggests that the corresponding DEC should somehow select the component of  
$\varrho\righthalfcup\pi$
aligned with $W$. 
In this regard, we note that $\varrho\righthalfcup\pi$ admits a tangential-normal decomposition with respect to the embedding $\Sigma\hookrightarrow M$, namely, 
\[
\varrho\righthalfcup\pi=\left((\varrho\righthalfcup\pi)^\downvdash,(\varrho\righthalfcup\pi)^\upvdash
\right)=\left(\pi_{nA},\pi_{nn}\right).
\]
It turns out that the boundary DECs employed here   
explore this natural decomposition. More precisely, as the lower bound for $H_g$ mentioned above we take the norm $|(\varrho\righthalfcup\pi)^\downvdash|$ of the tangential component if $\Lambda=0$ and the norm $|(\varrho\righthalfcup\pi)^\upvdash|$ of the normal component if $\Lambda<0$; see Definitions \ref{bddectan} and \ref{bddecnor} below.

\begin{remark}\label{orthoassrem}{\rm The orthogonality condition along $\Sigma=M\cap\overline\Sigma$  is rather natural from the viewpoint of the Hamilton-Jacobi analysis put forward in \cite{BY}. In fact, as argued there, it takes place for instance when we require that the corresponding Hamiltonian flow  evolves the initial data set $(M,g,h,\Sigma)$ in such a way that the canonical variables are not allowed to propagate accross  $\Sigma$.  We also remark 
that this assumption is automatically satisfied in case the spacetime enjoys the time-symmetry $t\mapsto -t$, where $t=0$ stands for $M$.
}
\end{remark}

For this first part of the discussion, which covers the asymptotically flat case, we  assume that $\Lambda=0$ in (\ref{gibbons}). To describe the corresponding reference spacetime,
let  $(\mathbb L^{n,1},\overline \delta)$ be the Minkowski space with coordinates $X=(x_0,x)$, $x=(x_1,\cdots,x_n)$,  endowed with the standard flat metric
\[
\langle X,X'\rangle_{\overline\delta}=-x_0x_0'+x_1x'_1+\cdots +x_nx_n'. 
\]
We denote by  $\mathbb L^{n,1}_+=\{X\in\mathbb L^{n,1};x_n\geq 0\}$ the {\em Minkowski half-space}, whose boundary $\partial \mathbb L^{n,1}_+$ is a time-like hypersurface. Notice that $\mathbb L^{n,1}_+$ carries the totally geodesic spacelike hypersurface $\mathbb R^n_+=\{x\in\mathbb L^{n,1}_+;x_0=0\}$ which is endowed with the standard Euclidean metric 
$\delta=\overline \delta|_{\mathbb R^{n}_+}$. Notice  that $\mathbb R^n_+$ also carries a totally geodesic boundary $\partial \mathbb R^n_+$.  
One aim of this paper is to formulate and prove, under suitable dominant energy conditions and in the spin setting, a positive mass theorem for spacetimes whose spatial infinity is modelled on the embedding $\mathbb R^n_+\hookrightarrow\mathbb L^{n,1}_+$.

We now make precise the requirement that the spatial infinity of $\overline M$, as observed along the initial data set $(M,g,h,\Sigma)$, is modelled on the inclusion $\mathbb R^n_+\hookrightarrow \mathbb L^{n,1}_+$.  
For large $r_0>0$ set $\mathbb R^n_{+,r_0}=\{x\in\mathbb R_+^n;|x|>r_0\}$, where  $|x|=\sqrt{x_1^2+...+x_n^2}$.

\begin{definition}\label{def:as:hyp}
	We say that $(M,g,h,\Sigma)$ is {\em{asymptotically flat}} (with a non-compact  boundary $\Sigma$) if there exist $r_0>0$, a region $M_{{\rm ext}}\subset M$, with $M\backslash M_{\rm{ext}}$ compact, and a diffeomorphism
	$$
	F:\mathbb R_{+,r_0}^n\to  M_{{\rm ext}}
	$$
	satisfying the following: 
	\begin{enumerate}
		\item as $|x|\to+\infty$, 
		\begin{equation}\label{asympthyp}
		|f|_\delta+|x||\partial f|_\delta+|x|^2|\partial^2f|_\delta=O(|x|^{-\tau}), 
		\end{equation}
		and 
		\[
		|h|_\delta+|x||\partial h|_{\delta}=O(|x|^{-\tau-1}),
		\]
		where $\tau>(n-2)/2$, $f:=g-\delta$, and we have identified $g$ and $h$ with their pull-backs under $F$ for simplicity of notation;
		\item there holds 
		\begin{equation}\label{intcond}
		\int_M|\Psi_0(g,h)|dM+\int_\Sigma|\Phi^\downvdash(g,h)|d\Sigma<+\infty,
		\end{equation}
		where 
		\begin{equation}\label{consts}
		\Phi^\downvdash(g,h)=2\left(
		\begin{array}{c}
		H_g\\
		{{(\varrho \righthalfcup \pi)^\downvdash}}
		\end{array}
		\right).
		\end{equation}
	\end{enumerate}
\end{definition}

\begin{figure}[t]
	\begin{center}
		\begin{tikzpicture}
		\begin{scope}[shift={(-6,0)}, scale=0.7]
		\draw (3,0) arc (180:0:3cm);
		\draw (0,0) -- (3,0); 
		\draw (3,0) -- (9,0);
		\draw (9,0) -- (12,0); 
		\draw [fill=black] (3,0) circle (2pt);
		\draw [fill=black] (9,0) circle (2pt);
		\draw [fill=black] (6-3*1.41/2,3*1.41/2) circle (1pt);
		\draw [->, very thick] (6-3*1.41/2,3*1.41/2) to (6-3,3);
		\draw [->, very thick] (9,0) to (10.3,0);
		\node at(9.8,-0.5) {$\vartheta$};
		\node at(3,2.2) {$\mu$};
		\node at(3,-0.5) {$S^{n-2}_r$};
		\node at(7,1.5) {$\Omega_r$};
		\node at(8,3) {$S^{n-1}_{r,+}$};
		\node at(1,3) {$M$};
		\node at(11,0.4) {$\Sigma$};
		\node at(6,-0.5) {$\Sigma_r$};
		\end{scope}
		\end{tikzpicture}
	\end{center}
	\caption{An initial data set with non-compact boundary.}
	\label{fig1}
\end{figure}
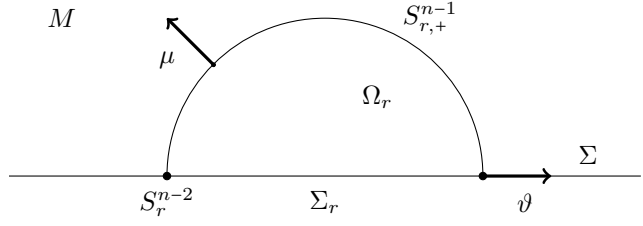


Under these conditions, we may assign to $(M,g,h,\Sigma)$ an energy-momentum-type asymptotic invariant as follows. Denote by $S^{n-1}_{r,+}$ the upper hemisphere of radius $r$ in the asymptotic region, $\mu$ its outward unit normal vector field (computed with respect to $\delta$), $S^{n-2}_r=\partial S^{n-1}_{r,+}$ and $\vartheta=\mu|_{S^{n-2}_r}$ its outward co-normal unit vector field (also computed with respect to $\delta$); see Figure \ref{fig1}.  

\begin{definition}\label{massdef}
	Under the conditions of Definition \ref{def:as:hyp}, the {\em energy-momentum} vector of the initial data set $(M,g,h,\Sigma)$ is the $n$-vector $(E,P)$ given by 
	\begin{equation}\label{enerdef}
	E=\lim_{r\to +\infty}\left[\int_{S^{n-1}_{r,+}}\left({\rm div}_\delta f-d{\rm tr}_\delta f\right)\left(\mu\right) dS^{n-1}_{r,+} + \int_{S^{n-2}_r}f\left(\partial_{x_n},\vartheta\right) dS^{n-2}_r\right],
	\end{equation} 
	and
	\begin{equation}\label{momdef}
	P_A=\lim_{r\to +\infty}2\int_{S^{n-1}_{r,+}}\pi\left(\partial_{x_A}, \mu\right) dS^{n-1}_{r,+},\:\:\:\:\:A=1,...,n-1.
	\end{equation}
	\end{definition}

If a chart at infinity $F$ as above is fixed, the 
energy-momentum vector $(E,P)$ may be viewed as a linear functional on the vector space 
$\mathbb R\oplus \mathfrak K^+_{\delta}$, where  elements in the first factor are identified with time-like translations normal to $\mathbb R^n_+\hookrightarrow \mathbb L_+^{n,1}$ and 
\begin{equation}\label{destkill}
\mathfrak K^+_{\delta}=\{W=\sum_{A=1}^{n-1}a_A\partial_{x_A};a_A\in\mathbb R \}
\end{equation} 
corresponds to 
translational Killing vector fields on $\mathbb R^n_+$ which are tangent to $\partial\mathbb R^n_+$.
Under a change of chart, it will be proved that $(E,P)$ is well defined (up to composition with an element of ${\rm SO}_{n-1,1}$); see Corollary \ref{welldef} below.
Thus, we may view $(E,P)$ as an element of the Minkowski space $\mathbb L^{n-1,1}$ at spatial infinity. Theorem \ref{main} below determines the causal character of this vector under suitable dominant energy conditions, showing that it is future-directed and causal in case the  manifold underlying the initial data set is spin. 

\begin{definition}\label{bddectan}
	We say that $(M,g,h,\Sigma)$ satisfies the {\em tangential boundary DEC} if there holds
	\begin{equation}\label{bddectan2}
	H_g\geq |(\varrho\righthalfcup \pi)^\downvdash|
	\end{equation}
	everywhere along $\Sigma$. 
	\end{definition}

We may now state our main result in the asymptotically flat setting (for the definition of the weighted H\"older spaces $C^{k,\alpha}_{-\tau}$, we refer the reader to \cite{HL}).

\begin{theorem}\label{main}
	Let $(M,g,h,\Sigma)$ be an asymptotically flat initial data set satisfying the DECs (\ref{intdec2}) and (\ref{bddectan2}). Assume that $M$ is spin. Then 
	\[
	E\geq |P|.
	\] 
	Furthermore, if $E = |P|$ and 
	\begin{equation}\label{decay_rhoJ_forrigid}
	\begin{array}{c}
	g-\delta \in C^{2,\alpha}_{-\tau}(M), \ \  h \in C^{1,\alpha}_{-\tau-1}(M) \qquad \text{for some $\alpha \in (0,1)$,} \qquad 
	\alpha + \tau > n-2, \\[0.2cm]
	\rho_0, J \in C^{0,\alpha}_{-n-\varepsilon}(M), \qquad \text{for some $\varepsilon>0$,} 
	\end{array}
	\end{equation}
then $E = |P| = 0$ and $(M,g)$ can be isometrically embedded in $\mathbb L^{n,1}_+$ with $h$ as the induced second fundamental form. Moreover, $\Sigma$ is totally geodesic (as a hypersurface in $M$), lies on $\partial\mathbb L^{n,1}_+$ and $M$ is orthogonal to $\partial\mathbb L^{n,1}_+$ along $\Sigma$. In particular, $h_{nA}$ vanishes on $\Sigma$.
	\end{theorem}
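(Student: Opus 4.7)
The plan is to adapt Witten's spinorial method to the initial data set with noncompact boundary, via the Witten-type formula from Theorem \ref{genwit}. First, I would set up the Dirac-Witten operator $\mathcal{D}$ on the spin bundle of $M$, built from the Levi-Civita connection of $g$ twisted by $h$ in the standard way. For each admissible asymptotic symmetry $\phi_0$ (a constant spinor on $\mathbb{R}^n_+$ parametrizing $\mathbb{R}\oplus\mathfrak{K}^+_\delta$), I would solve the boundary value problem of finding $\phi$ with $\mathcal{D}\phi = 0$ on $M$, $\phi - \phi_0$ decaying in a weighted Sobolev sense near infinity, and the MIT bag condition on $\Sigma$. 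Self-adjointness and Fredholmness of $\mathcal{D}$ under the MIT bag condition, combined with a Lax--Milgram argument in weighted spaces, furnish the solution; these ingredients are standard in view of the references cited in the introduction.

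Given such $\phi$, the Lichnerowicz--Weitzenb\"ock identity, integrated on the truncation of $M$ by a large coordinate hemisphere $S^{n-1}_{r,+}$ and the boundary piece $\Sigma_r$ as in Figure \ref{fig1}, expresses the pairing $\langle(E,P),\phi_0\rangle$ as the sum of three nonnegative contributions: the Dirac energy $\int_M |\nabla\phi|^2$, an interior term controlled from below by $\rho_0 - |J|\geq 0$ (the interior DEC \eqref{intdec2}), and a boundary integral along $\Sigma$ controlled by $H_g - |(\varrho\righthalfcup\pi)^\downvdash|\geq 0$ through the key algebraic compatibility between the MIT bag projector and the tangential boundary DEC \eqref{bddectan2}. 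Maximizing the resulting inequality $\langle(E,P),\phi_0\rangle \geq 0$ over the unit sphere of admissible constant spinors yields $E\geq |P|$.

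For the rigidity statement, the light-like equality $E = |P|$ does not by itself imply vanishing within the purely spinorial framework, so I would follow the doubling strategy flagged in the introduction: reflect $(M,g,h,\Sigma)$ across $\Sigma$ to produce a boundaryless asymptotically flat initial data set $(\widehat M,\widehat g,\widehat h)$. The orthogonality of $M$ along $\Sigma$ and the refined decay assumptions \eqref{decay_rhoJ_forrigid} guarantee enough regularity of the doubled data, while the tangential boundary DEC controls the distributional contributions to $(\widehat\rho,\widehat J)$ supported on $\Sigma$ so that the doubled interior DEC holds in the sense required by Huang--Lee \cite{HL}. Their rigidity theorem for the asymptotically flat spin case then forces $E = |P| = 0$. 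Once the energy-momentum vanishes, the Lichnerowicz identity with all three nonnegative terms vanishing forces $\nabla\phi = 0$ for every $\phi$ arising from the BVP, producing a maximal space of Dirac--Witten parallel spinors; the classical Beig--Chru\'sciel argument then produces an isometric embedding of $(M,g)$ into $\mathbb{L}^{n,1}_+$ with $h$ as induced second fundamental form. Finally, the MIT bag condition on $\Sigma$, combined with the parallel spinor equation, forces $\Sigma$ to be totally geodesic in $M$, to lie on $\partial\mathbb{L}^{n,1}_+$ with $M$ orthogonal to $\partial\mathbb{L}^{n,1}_+$ along it, and yields $h_{nA}=0$ on $\Sigma$. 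The hardest step is the doubling argument: verifying that $(\widehat M,\widehat g,\widehat h)$ has the regularity and DEC needed to invoke \cite{HL}, particularly since $\widehat h$ may have jumps across $\Sigma$ in components transverse to the reflection, requires careful inspection of the distributional constraint map and a density/approximation scheme compatible with the weighted spaces used there.
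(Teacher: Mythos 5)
Your overall strategy coincides with the paper's: the inequality $E\geq|P|$ is obtained exactly as you describe, by solving $\overline{\mathcal D}\psi=0$ with MIT bag condition and asymptotics prescribed by a constant spinor chosen as an eigenvector of $P_A\partial_{x_0}\cdot\partial_{x_A}\cdot$ with eigenvalue $|P|$, and then reading off the sign of the right-hand side of the Witten-type formula from the two DECs. The rigidity is likewise obtained by doubling across $\Sigma$ and invoking Huang--Lee, followed by the parallel-spinor/Killing-development argument.

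The one place where your plan diverges from the paper, and where as written it has a gap, is the step you yourself flag as hardest: making the doubled data admissible for \cite{HL}. You propose to absorb possible jumps of $\widehat h$ across $\Sigma$ into a distributional constraint map plus an approximation scheme; but the rigidity theorem of Huang--Lee is stated for classically regular data ($g-\delta\in C^{2,\alpha}_{-\tau}$, $h\in C^{1,\alpha}_{-\tau-1}$), and it is not clear that a density argument compatible with the equality case can be run. The paper avoids this entirely: from $E=|P|$ one first extracts \emph{pointwise} boundary rigidity before doubling. Concretely, differentiating the MIT bag identity $i e_n\cdot\psi=\pm\psi$ along $\Sigma$ using $\overline\nabla\psi=0$ gives $(-b_{AB}e_B+\pi_{An}e_0)\cdot\psi=0$, and the vanishing of $\langle(H_g+\mathcal U)\psi,\psi\rangle$ (differentiated in the direction of test spinors) gives two more relations; combined with a strong maximum principle showing $V=\langle\psi,\psi\rangle>0$, these force $H_g=0$, $b_{AB}=0$ and $\pi_{An}=0$ on $\Sigma$. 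Hence $\Sigma$ is totally geodesic and the doubled metric is $C^{2,\alpha}_{-\tau}$ with no singular part, while a Riemannian Killing development $M\times\mathbb R$ is used to see that $h$ extends in $C^{1,\alpha}_{-\tau-1}$ across the double. Only then is \cite{HL} applied (after checking $P'_n=0$ by symmetry and $E'=2E$, $P'_A=2P_A$). So the missing idea in your write-up is that the equality case itself supplies the boundary regularity needed for the doubling; without that intermediate step the application of Huang--Lee is not justified.
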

	
In the physically relevant case $n=3$, the spin assumption poses no restriction whatsoever since any oriented $3$-manifold is spin. 
Theorem \ref{main} is the natural extension of Witten's celebrated result \cite{Wi, GHHP, PT, He1, D, XD} to our setting, and its time-symmetric version appears in \cite[Section 5.2]{ABdL}. The mean convexity condition $H_g\geq 0$, which plays a prominent role in \cite{ABdL}, is deduced here as an immediate consequence of the boundary DEC (\ref{bddectan2}), thus acquiring a justification on purely {\em physical} grounds;  see also the next remark.

\begin{remark}\label{declag}{\rm  The DECs (\ref{intdec2}) and (\ref{bddectan2}) admit a neat interpretation derived from the Lagrangian formulation. Indeed, after extremizing (\ref{gibbons}) we get the field equations 
		\begin{equation}\label{efe}
		\begin{cases}
		\reversemarginpar
		{\rm Ric}_{\overline g}-\frac{R_{\overline g}}{2}\overline g+\Lambda \overline g=T,&\text{in}\:\overline M,
		\\
		{\II}_{\overline g} -H_{\overline g} \overline g|_{\overline \Sigma}=S,&\text{on}\:\overline \Sigma.
		\end{cases}
		\end{equation}
		Here, ${{\rm Ric}_{\overline g}}$  is the Ricci tensor of $\overline g$ and $T$ is the stress-energy tensor in $\overline M$, which is locally given by  
		\[
		T_{\alpha\beta}=\frac{1}{\sqrt{-\det \overline g}}\frac{\partial}{\partial\overline g^{\alpha\beta}}\left(\sqrt{-\det \overline g}\mathscr T\right).
		\]
		Also, the boundary stress-energy tensor $S$ on $\overline \Sigma$ is similarly expressed in terms of $\mathscr S$ and $\overline g|_{\overline \Sigma}$.  
		It is well-known that restriction of the first system of equations in (\ref{efe}) to $M$ yields the {\em interior constraint equations}
		\begin{equation}\label{constraint1n}
		\left\{
		\begin{array}{rcl}
		\rho_\Lambda & = & T_{00},\\
		J_i & = & T_{0i},
		\end{array}
		\right.
		\end{equation}
		so that (\ref{intdec2}) is equivalent to saying that the vector $T_{0\alpha}$ is causal and future directed. On the other hand, if $\varrho=e_n$ and $\overline\varrho$ are the inward unit normal vectors to $\Sigma$ and $\overline\Sigma$, respectively, then the assumption that $M$ meets $\overline \Sigma$
		orthogonally along $\Sigma$ means that $\overline\varrho|_\Sigma=\varrho$ and $e_0$ is tangent to $\overline\Sigma$. We then have
		\begin{eqnarray*}
			S_{00} & = & (\Pi_{\overline g})_{00}+H_{\overline g} \: = \: (\Pi_{\overline g})_{AA}\\
			& = & \langle \overline\nabla_{e_A}e_A,\overline\varrho\rangle \: = \: \langle \nabla_{e_A}e_A,\varrho\rangle 
			\:=  H_g
		\end{eqnarray*}	
and
		\begin{eqnarray*}
			S_{0A} & = & (\Pi_{\overline g})_{0A}
			 =  \langle \overline\nabla_{e_A}e_0,\overline\varrho\rangle\\
			& = & \langle \overline\nabla_{e_A}e_0,\varrho\rangle
			 =  h(e_A,\varrho)\\
			& = &  (\varrho\righthalfcup h)_A
			 =  (\varrho\righthalfcup \pi)_A,
		\end{eqnarray*}
		where in the last step we used that $(\varrho\righthalfcup g)_A=0$. Thus, we conclude that the restriction of the second system of equations in (\ref{efe}) to $\Sigma$ gives the {\em boundary constraint equations}
		\begin{equation}\label{constraint2}
		\left\{
		\begin{array}{rcl}
		H_g & = & S_{00},\\
		(\varrho \righthalfcup {{\pi}})_A & = & S_{0A}.
		\end{array}
		\right.
		\end{equation}
		As a consequence, (\ref{bddectan2}) is equivalent to requiring that the vector $S_{0a}$ is causal and future directed. We note however that the boundary DEC in the asymptotically hyperbolic case discussed below does not seem to admit a similar interpretation coming from the Lagrangian formalism; see Remark \ref{rem22}. 
		}
	\end{remark}

\begin{remark}\label{rem-cauchy}\rm Similarly to what happpens in (\ref{constraint1n}), the boundary constraint equations in (\ref{constraint2}) seem to relate initial values of fields along $\Sigma$ instead of determining how fields evolve in time. It remains to investigate the question of whether these boundary constraints play a role in the corresponding initial-boundary value Cauchy problem, so as to provide an alternative to the approaches in \cite{RS,KW}.
	\end{remark}

\begin{remark}
{\rm
One could try a connection with the positive mass theorem with corners in \cite[Section VI]{Sh} as follows.
Since the causal character of our energy-momentum vector of Definition 2.4 is invariant by the choice of asymptotically flat coordinates, we can choose one that satisfies $g_{An}=0$ on $\Sigma$ for $A=1,...,n-1$. In those coordinates, the integral on $S^{n-2}_r$ vanishes in (2.7). By a doubling argument, we obtain an asymptotically flat manifold 
$(\widetilde M,\widetilde g)$ in the usual sense, i.e., without boundary and modeled on $\mathbb R^n$ at infinity. Here, the metric $\widetilde g$, which is continuous but nonsmooth along $\Sigma$, is asymptotically flat in the sense of \cite{LL, Sh}. 
However, this construction does not lead to a causal corner in the sense of \cite[Definition 17]{Sh}. This is because our boundary DEC is weaker than the one in \cite{Sh} as it does not include the $n$-th direction (across $\Sigma$).
The point is that the norm of our linear-momentum vector already gives a geometric invariant without including this direction. Although the orthogonal direction may lead to some other geometric invariant, we believe that the first $n-1$ components are the minimum required to obtain such geometric invariance.
}
\end{remark}

Now we discuss the case of negative cosmological constant.
As already mentioned, a positive mass inequality for time-symmetric asymptotically hyperbolic initial data sets endowed with a non-compact boundary has been proved in \cite[Theorem 5.4]{AdL}. Here, we pursue this line of research one step further and present a spacetime version of this result. In particular, we recover the mean convexity assumption along the boundary as an immediate consequence of the suitable boundary DEC.

To proceed, we assume that the initial data set $(M,g,h,\Sigma)$ is induced by the embedding $(M,g)\hookrightarrow (\overline M,\overline g)$, where $\overline g$ extremizes (\ref{gibbons}) with 
$\Lambda=\Lambda_n:=-{n(n-1)}/{2}$.
Recall that,
using coordinates $Y=(y_0, y)$, $y=(y_1,\cdots,y_n)$, the {\em anti-de Sitter space}
is the spacetime $({\rm AdS}^{n,1}, \overline b)$, where 
\[
\overline b=-(1+|y|^2)dy_0^2+b, \quad b=(1+|y|^2)^{-1}d|y|^2+|y|^2h_0,
\]
$|y|=\sqrt{y_1^2+\cdots +y_n^2}$ and, as usual, $h_0$ is the standard metric on the unit sphere $\mathbb S^{n-1}$. Our reference  spacetime now  is the {\em AdS half-space} ${{\rm AdS}_+^{n,1}}$ defined by the requirement $y_n\geq 0$. Notice that this space carries a boundary $\partial {\rm AdS}^{n,1}_+=\{Y\in {\rm AdS}^{n,1}_+;y_n=0\}$ which is timelike and totally geodesic. Our aim is to formulate a positive mass inequality for spacetimes whose spatial infinity is  modelled  on the inclusion $\mathbb H_n^+\hookrightarrow {\rm AdS}^{n,1}_+$, where  $\mathbb H_+^n=\{Y\in {\rm AdS}^{n,1}_+;y_0=0\}$ is the totally geodesic spacelike slice which, as the notation suggests, can be identified with the hyperbolic half-space $(\mathbb H^n_+,b)$ appearing in \cite{AdL}.

We now make precise the requirement that the spatial infinity of $\overline M$, as observed along the initial data set $(M,g,h,\Sigma)$, is modelled on the inclusion $\mathbb H^n_+\hookrightarrow {\rm AdS}^{n,1}_+$.  
For all $r_0>0$ large enough let us set $\mathbb H^n_{+,r_0}=\{y\in\mathbb H_+^n; |y|> r_0\}$.   

\begin{definition}\label{def:as:hyp2}
	We say that the initial data set $(M,g,h,\Sigma)$ is {\em{asymptotically hyperbolic}} (with a non-compact  boundary $\Sigma$) if there exist $r_0>0$, a region $M_{{\rm ext}}\subset M$, with $M\backslash M_{\rm{ext}}$ compact, and a diffeomorphism
	$$
	F:\mathbb H_{+,r_0}^n\to  M_{{\rm ext}}, 
	$$
	satisfying the following: 
	\begin{enumerate}
		\item as $|y|\to+\infty$, 
		\begin{equation}\label{asympthyp2}
		|f|_b+|\nabla_b f|_b+|\nabla_b^2f|_b=O(|y|^{-\kappa}), 
		\end{equation}
		and 
		\[
		|h|_b+|\nabla_b h|_b=O(|y|^{-\kappa}),
		\]
		where $\kappa>n/2$, $f:=g-b$, and  we have identified $g$ and $h$ with their pull-backs under $F$ for simplicity of notation;
		\item there holds 
		\begin{equation}\label{intcond2}
		\int_M |y| |\Psi_{\Lambda_n}(g,h)|dM+\int_\Sigma |y| |\Phi^\upvdash(g,h)|d\Sigma<+\infty,
		\end{equation}
		where 
		\begin{equation}\label{consts2}
		\Phi^\upvdash(g,h)=2\left(
		\begin{array}{c}
		H_g\\
		{{(\varrho \righthalfcup \pi)^\upvdash}}
		\end{array}
		\right)
		\end{equation}
		and $|y|$ has been smoothly extended to $M$.
	\end{enumerate}
\end{definition}  

Under these conditions, we may assign to $(M,g,h,\Sigma)$ an energy-momentum-type asymptotic invariant as follows. We essentially keep the previous notation and denote by $S^{n-1}_{r,+}$ the upper hemisphere of radius $r$ in the asymptotic region, $\mu$ its outward unit vector field (computed with respect to $b$), $S^{n-2}_r=\partial S^{n-1}_{r,+}$ and $\vartheta=\mu|_{S^{n-2}_r}$ its outward co-normal unit vector field (also computed with respect to $b$). As in  \cite{AdL}, we consider the space of static potentials
\[
\mathcal N_b^+=\left\{
V:\mathbb H^n_+\to \mathbb R;\nabla_b^2V=Vb\,\,\,{\rm in}\,\,\,\mathbb H^n_+, \frac{\partial V}{\partial y_n}=0
\,\,\,{\rm on}\,\,\,\partial\mathbb H^n_+\right\}.
\]
Thus, $\mathcal N_b^+$ is generated by $\{V_{(a)}\}_{a=0}^{n-1}$, where $V_{(a)}=x_a|_{\mathbb H^n_+}$ and here we view $\mathbb H^n_+$ embedded as the upper half hyperboloid in $\mathbb L^{n,1}_+$ endowed with coordinates $\{x_\alpha\}$.  Notice that $V=O(|y|)$ as $|y|\to +\infty$ for any $V\in\mathcal N_b^+$. Finally, we denote by  $\mathfrak{Kill}(\mathbb H^n)$ ($\mathfrak{Kill}({\rm AdS}^{n,1})$, respectively) the Lie algebra of Killing vector fields of $\mathbb H^n$ (${\rm AdS}^{n,1}$, respectively) and set 
$\mathcal N_b=\mathcal N_b^+\oplus[x_n|_{\mathbb H^n}]$.
Note the isomorphism $\mathfrak{Kill}({\rm Ads}^{n,1})\cong \mathcal N_b\oplus \mathfrak{Kill}(\mathbb H^n)$, where
each $V\in\mathcal N_b$ is identified with the Killing vector field in ${\rm AdS}^{n,1}_+$ whose restriction to the spacelike slice $\mathbb H^n_+$ is $V(1+|y|^2)^{-1/2}\partial_{{y_0}}$.

\begin{definition}\label{massdef2}
	The {\em energy-momentum} of the asymptotically hyperbolic initial data set $(M,g,h,\Sigma)$ is the linear functional 
$$
\mathfrak m_{(g,h,F)}: \mathcal N_b^+\oplus\mathfrak{K}_b^+\to \mathbb R
$$ 
given by 
	\begin{eqnarray}\label{enerdef2}
	\mathfrak m_{(g,h,F)}(V,W) & = & \lim_{r\to +\infty}\left[\int_{S^{n-1}_{r,+}}\widetilde{\mathbb U}(V,f)(\mu) dS^{n-1}_{r,+}+
	\int_{S^{n-2}_{r}}Vf(\varrho_b,\vartheta)dS^{n-2}_{r}\right]\nonumber\\
	& & \quad +\lim_{r\to +\infty}2\int_{S^{n-1}_{r,+}}\pi(W,\mu)dS^{n-1}_{r,+}, 
	\end{eqnarray} 
	where $\varrho_b$ is the inward unit normal vector to $\partial\mathbb H^n_+$, 
	\begin{equation}\label{momdef2}
	\widetilde{\mathbb U}(V,f)=V({\rm div}_bf-d{\rm tr}_bf)-{\nabla_bV}\righthalfcup f+{\rm tr}_bf\, dV,
	\end{equation}
	and  $\mathfrak K^+_b$ is the subspace of elements of $\mathfrak{Kill}(\mathbb H^n)$ which are orthogonal to $\partial\mathbb H^n_+$.
\end{definition} 

\begin{remark}\label{rem11}
\rm {As already pointed out, the energy-momentum invariant in Definition \ref{massdef} may be viewed as a linear functional on the space of  {\em translational} Killing vector fields $\mathbb R\oplus \mathfrak K^+_\delta$; see the discussion surrounding  (\ref{destkill}). This should be contrasted to the Killing vector fields in the space $\mathcal N_b^+\oplus\mathfrak{K}_b^+$ appearing in Definition \ref{massdef2}, which are {\em rotational} in nature. Besides, the elements of $\mathfrak K^+_\delta$ are tangent to $\partial\mathbb R^n_+$ whereas those of $\mathfrak K^+_b$ are normal to $\partial\mathbb H^n_+$. Despite these notable distinctions between the associated asymptotic invariants, it is a remarkable feature of the spinorial approach that the corresponding mass inequalities can be established by quite similar methods.}
\end{remark}
\begin{definition}\label{bddecnor}
	We say that $(M,g,h,\Sigma)$ satisfies the {\em normal boundary DEC} if there holds
	\begin{equation}\label{bddecnor2}
	H_g\geq |(\varrho\righthalfcup \pi)^\upvdash|
	\end{equation}
	everywhere along $\Sigma$. 
\end{definition}
\begin{remark}\label{rem22}
{\rm Differently from what happens to the tangential boundary DEC in Definition \ref{bddectan}, the requirement in (\ref{bddecnor2}), which involves the normal component of $\varrho\righthalfcup \pi$, does not seem to admit an interpretation in terms of the Lagrangian formulation underlying the field equations (\ref{efe}), the reason being that only the variation of the tangential component of $\overline g$ shows up in the boundary contribution to the variational formula for the action (\ref{gibbons}). This distinctive aspect of the Lagrangian approach explains why the second system of equations in (\ref{efe}) is explicited solely in terms of tensorial quantities acting on $T\overline\Sigma$, which  leads to the argument in Remark \ref{declag} and eventually justifies the inclusion of the Hamiltonian motivation for the boundary DECs based on (\ref{hamdens}).}
\end{remark}

We now state our main result in the asymptotically hyperbolic  case.
This extends to our setting a previous result by Maerten \cite{Ma}.

\begin{theorem}\label{maintheohyp}
	Let $(M,g,h,\Sigma)$ be an asymptotically hyperbolic initial data set as above and assume that the DECs (\ref{intdec2}) and (\ref{bddecnor2}) hold. Assume further that $M$ is spin. Then there exists a quadratic map 
	\[
	\mathbb C^d\stackrel{\mathcal K}{\longrightarrow} \mathcal N_b^+\oplus \mathfrak{K}_b^+,
	\] 
	where $d=\big[\frac{n}{2}\big]$, such that the composition
	\[
	\mathbb C^d\stackrel{\mathcal K}{\longrightarrow} \mathcal N_b^+\oplus \mathfrak{K}_b^+\stackrel{\mathfrak{m}_{(g,h,F)}}{\longrightarrow}\mathbb R
	\]
	is a hermitean quadratic form $\widetilde{\mathcal K}$ satisfying $\widetilde{\mathcal K}\geq 0$. Also, if $\widetilde{\mathcal K}=0$ then $(M,g)$ is isometrically embedded in ${\rm AdS}^{n,1}_+$ with $h$ as the induced second fundamental form and, 
	besides, $\Sigma$ is totally geodesic (as a hypersurface in $M$), lies on $\partial{\rm AdS}^{n,1}_+$, $\overline\Sigma$ is geodesic (as a hypersurface of $\overline M$) in directions tangent to  $\Sigma$, and $h_{nA}$  vanishes on $\Sigma$.
\end{theorem}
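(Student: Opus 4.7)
The plan is to adapt Witten's spinorial method, taking as starting point the Witten-type formula of Theorem \ref{witt-type}. Given a spinor $\phi$ on $M$ satisfying the modified (``imaginary Killing'') Dirac--Witten equation $\widehat{\mathcal{D}}\phi=0$ and an appropriate boundary condition along $\Sigma$, that formula should express an asymptotic integral at infinity as the sum of a non-negative bulk integrand (involving $|\widehat\nabla\phi|^2$ and the interior constraint quantities $\rho_{\Lambda_n},J$) and a boundary integrand on $\Sigma$ built out of $H_g$ and $\varrho\righthalfcup\pi$. The asymptotic integral, in turn, should reduce to $\mathfrak m_{(g,h,F)}(\mathcal K(\phi_0))$, where $\phi_0$ is the asymptotic value of $\phi$ on $\mathbb H^n_+$ and $\mathcal K:\mathbb C^d\to\mathcal N_b^+\oplus\mathfrak K_b^+$ is the quadratic map obtained by squaring an imaginary Killing spinor on $(\mathbb H^n_+,b)$ into the pair (static potential, Killing vector field). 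The dimension $d=[n/2]$ is exactly the complex dimension of the space of imaginary Killing spinors on $\mathbb H^n_+$ compatible with the chirality boundary condition on $\partial\mathbb H^n_+$.

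First I would solve the boundary value problem: given $\phi_0$ arising from an imaginary Killing spinor on $\mathbb H^n_+$ satisfying the chirality condition on $\partial\mathbb H^n_+$, show existence of a spinor $\phi$ on $M$ with $\widehat{\mathcal D}\phi=0$, $\phi-\phi_0\to 0$ at infinity at the appropriate rate, and the chirality boundary condition $\mathcal Q\phi=\phi$ along $\Sigma$. This uses standard elliptic theory for the Dirac--Witten operator in weighted spaces (as in \cite{Wa,CH,Ma,CMT}), coupled with the ellipticity of the chirality condition (already discussed, in the time-symmetric case, in \cite{AdL}). The asymptotic hyperbolic decay (\ref{asympthyp2}) together with $\kappa>n/2$ ensures the relevant Fredholm and coercivity properties.

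Next, apply the Lichnerowicz--Weitzenb\"ock type identity underlying Theorem \ref{witt-type} to $\phi$ and integrate by parts on a large region bounded by $S^{n-1}_{r,+}$ on the outside, by $\Sigma$ on the side, and meeting $\Sigma$ along $S^{n-2}_r$. The bulk integrand becomes manifestly non-negative using the interior DEC (\ref{intdec2}), via the standard computation involving the symmetrized energy tensor $\rho_{\Lambda_n}\,\mathrm{id}+J_i e_0\cdot e_i\cdot$. The crucial point is the boundary term on $\Sigma$: invoking the chirality condition together with the tangential/normal decomposition $\varrho\righthalfcup\pi=((\varrho\righthalfcup\pi)^\downvdash,(\varrho\righthalfcup\pi)^\upvdash)$, the tangential component should drop out (its Clifford action anti-commutes with the chirality involution), leaving precisely $H_g-\langle(\varrho\righthalfcup\pi)^\upvdash\sigma,\phi\rangle$ against $|\phi|^2$, which is $\ge 0$ by the normal boundary DEC (\ref{bddecnor2}). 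Passing to the limit $r\to\infty$, the right-hand side converges to $\mathfrak m_{(g,h,F)}(\mathcal K(\phi_0))$, showing that $\widetilde{\mathcal K}(\phi_0):=\mathfrak m_{(g,h,F)}(\mathcal K(\phi_0))\ge 0$ and that it is a Hermitian quadratic form in $\phi_0\in\mathbb C^d$.

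For the rigidity statement, if $\widetilde{\mathcal K}=0$ then for every admissible $\phi_0$ the solution $\phi$ makes both the bulk and boundary integrands vanish. The vanishing of the bulk forces $\widehat\nabla\phi\equiv 0$, i.e.\ $\phi$ is a true imaginary Killing spinor on $(M,g,h)$; by varying $\phi_0$ over $\mathbb C^d$ and using that such spinors span a space of maximal dimension, one obtains enough parallel sections to reconstruct the full set of spacetime Killing spinors of ${\rm AdS}^{n,1}_+$, which forces $(M,g)$ to embed isometrically in ${\rm AdS}^{n,1}_+$ with $h$ as its second fundamental form (as in \cite{Ma,CMT}). The vanishing of the boundary integrand along $\Sigma$, combined with the chirality condition and the identities worked out in \cite{AdL}, forces $H_g\equiv 0$ and $(\varrho\righthalfcup\pi)^\upvdash\equiv 0$, hence $h_{nn}={\rm tr}_gh$ and, together with $h_{nA}=0$ (which likewise falls out of the boundary Weitzenb\"ock computation in the chirality setting), yields that $\Sigma$ is totally geodesic in $M$, lies on $\partial{\rm AdS}^{n,1}_+$, and that $\overline\Sigma$ is geodesic along directions tangent to $\Sigma$.

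The main obstacle I anticipate is ensuring that the chirality boundary condition fits together with the normal boundary DEC to produce the correct sign in the boundary integrand on $\Sigma$; concretely, one must verify that, under chirality, the contribution of $(\varrho\righthalfcup\pi)^\downvdash$ cancels identically while that of $(\varrho\righthalfcup\pi)^\upvdash$ survives with the sign needed so that (\ref{bddecnor2}) yields non-negativity. A secondary technical point is the precise identification of the asymptotic boundary integral with $\mathfrak m_{(g,h,F)}(\mathcal K(\phi_0))$, which requires careful bookkeeping of cross terms between the $\widetilde{\mathbb U}$-term, the extra boundary integral on $S^{n-2}_r$, and the momentum piece $2\int_{S^{n-1}_{r,+}}\pi(W,\mu)$ appearing in (\ref{enerdef2}), all expressed in terms of the squaring map $\mathcal K$.
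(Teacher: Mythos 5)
Your proposal follows essentially the same route as the paper: the Killing--Dirac--Witten operator with chirality boundary conditions, the Weitzenb\"ock identity with non-negative bulk term under the interior DEC, the cancellation of the tangential part $\pi_{An}\langle e_0\cdot e_A\cdot\psi,\psi\rangle$ under chirality leaving $H_g\pm\pi_{nn}$ controlled by the normal boundary DEC, the squaring map $\mathcal K$ identifying the asymptotic term with $\mathfrak m_{(g,h,F)}(V_\phi,W_\phi)$, and the rigidity via a full set of Killing spinors as in \cite{Ma}. The only minor imprecision is that $\mathbb C^d$ is the full space of Killing spinors on $\mathbb H^n$, covered by using \emph{both} signs of the chirality condition, rather than the subspace compatible with a single one; this does not affect the argument.
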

\begin{remark}
	The space $\mathbb C^d$ should be viewed as the space of Killing spinors on $\mathbb H^n$; see the paragraph above Proposition \ref{killingder}.
\end{remark}

Differently from its counterpart in \cite{Ma}, the mass inequality $\widetilde{\mathcal K}\geq 0$ admits a nice geometric interpretation in any dimensions $n\geq 3$ as follows. 
Either $\mathcal N^+_b$ and $\mathfrak K^+_b$ can be canonically identified with $\mathbb L^{1,n-1}$ with its inner product 
\begin{equation}\label{lorinner}
\langle\langle z,w\rangle\rangle=z_0w_0-z_1w_1-\cdots- z_{n-1}w_{n-1}.
\end{equation}
The identification $\mathcal N^+_b\cong\mathbb L^{1,n-1}$ is done as in \cite{CH}  by regarding $\{V_{(a)}\}_{a=0}^{n-1}$ as an orthonormal basis and endowing $\mathcal N_b^+$   
with a time orientation by declaring $V_{(0)}$ as future directed. 
Then the isometry group of the totally geodesic spacelike slice $(\mathbb H^n_{+},b,\partial \mathbb H^n_{+})$, which is formed by those isometries of $\mathbb H^n$ preserving $\partial\mathbb H^n_+$, acts naturally on $\mathcal N_b^+$  in such a way  that the Lorentzian metric (\ref{lorinner}) is preserved (see \cite{AdL}).
On the other hand, as we shall see in Proposition \ref{propo:K+}, the identification $\mathfrak K^+_b\cong \mathbb L^{1,n-1}$ is obtained in a similar way.
Thus, in the presence of a chart $F$, the mass functional $\mathfrak m_{(g,h,F)}$ may be regarded as a {\em pair} of Lorentzian vectors $(\mathcal E, \mathcal P)\in \mathbb L^{1,n-1}\oplus \mathbb L^{1,n-1}$ (see (\ref{pair:vect})). 
In terms of this geometric interpretation of the mass functional, Theorem \ref{maintheohyp} may be rephrased as the next result, whose proof also appears in Section \ref{secthyp}. 

\begin{theorem}\label{maintheoohyp2}	
Under the conditions of Theorem \ref{maintheohyp},
the vectors $\mathcal E$ and $\mathcal P$, viewed as elements of $\mathbb L^{1,n-1}$, are both causal and  future directed. Moreover, if both of these vectors vanish then the rigidity statements in Theorem \ref{maintheohyp}
hold true. 
\end{theorem}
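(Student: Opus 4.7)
The plan is to derive Theorem \ref{maintheoohyp2} by translating the abstract positivity statement $\widetilde{\mathcal K}\geq 0$ of Theorem \ref{maintheohyp} into the separate causality statements for the two Lorentzian vectors $\mathcal E$ and $\mathcal P$. First, using the identifications $\mathcal N_b^+\cong \mathbb L^{1,n-1}$ (with orthonormal basis $\{V_{(a)}\}_{a=0}^{n-1}$, $V_{(0)}$ future timelike) and $\mathfrak K_b^+\cong \mathbb L^{1,n-1}$ (supplied by Proposition \ref{propo:K+}), I would represent the mass functional as
\begin{equation*}
\mathfrak m_{(g,h,F)}(V,W) \;=\; \langle\langle \mathcal E, V\rangle\rangle + \langle\langle \mathcal P, W\rangle\rangle,
\end{equation*}
so that the pair $(\mathcal E,\mathcal P)\in\mathbb L^{1,n-1}\oplus\mathbb L^{1,n-1}$ is intrinsically attached to the triple $(g,h,F)$.

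I would then analyze the quadratic map $\mathcal K:\mathbb C^d\to \mathcal N_b^+\oplus\mathfrak K_b^+$. Here $\mathbb C^d$, with $d=[n/2]$, is the space of Killing spinors on $\mathbb H^n$ obeying the boundary chirality condition along $\partial\mathbb H^n_+$ (as indicated in the remark following the theorem), and $\mathcal K$ is the squaring map $\phi\mapsto (V_\phi,W_\phi)$ whose two components are appropriate Dirac currents of $\phi$. Standard pointwise identities for Killing spinors on space forms then force each of $V_\phi$ and $W_\phi$ to be future-directed causal (in fact null) in $\mathbb L^{1,n-1}$, and writing $\widetilde{\mathcal K}(\phi)=\langle\langle \mathcal E,V_\phi\rangle\rangle+\langle\langle \mathcal P,W_\phi\rangle\rangle\geq 0$ yields a family of linear inequalities on $(\mathcal E,\mathcal P)$ parameterised by $\phi$.

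The main step is to exhibit, inside $\mathbb C^d$, a complex-linear subspace on which $W_\phi$ vanishes identically while $V_\phi$ ranges over a dense subset of the future null cone of $\mathbb L^{1,n-1}$, and, symmetrically, a subspace with $V_\phi\equiv 0$ and $W_\phi$ filling the future null cone. The inequality then specialises to $\langle\langle \mathcal E,V\rangle\rangle\geq 0$ for every $V$ in the future null cone, which by the reverse Cauchy-Schwarz inequality for $\mathbb L^{1,n-1}$ is equivalent to $\mathcal E$ being future-directed causal; the same argument delivers the corresponding assertion for $\mathcal P$.

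For the rigidity statement, if both $\mathcal E$ and $\mathcal P$ vanish as Lorentzian vectors then $\mathfrak m_{(g,h,F)}\equiv 0$, hence $\widetilde{\mathcal K}\equiv 0$, and Theorem \ref{maintheohyp} yields verbatim the isometric embedding into $\mathrm{AdS}^{n,1}_+$ together with the prescribed boundary conclusions. The principal obstacle is the decoupling step just described: showing that the image of $\mathcal K$ is rich enough to test the causal character of $\mathcal E$ and of $\mathcal P$ independently. This is a representation-theoretic computation concerning the action of the chirality projector along $\partial\mathbb H^n_+$ on the squaring maps, and it is precisely here that the dimension count $d=[n/2]$ must be matched against the $2n$ real degrees of freedom encoded in $(\mathcal E,\mathcal P)$.
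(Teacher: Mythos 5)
Your overall framework is the paper's: interpret $\widetilde{\mathcal K}(\phi)\geq 0$ as the family of linear inequalities $\langle\langle \mathcal E,V_\phi\rangle\rangle+\langle\langle \mathcal P,W_\phi\rangle\rangle\geq 0$ and test them against the image of $\mathcal K$, which by Proposition \ref{killingder} fills the future null cone; the rigidity half of your argument ($\mathcal E=\mathcal P=0\Rightarrow\mathfrak m_{(g,h,F)}\equiv 0\Rightarrow\widetilde{\mathcal K}=0$, then quote Theorem \ref{maintheohyp}) is exactly what the paper does. The problem is your self-declared ``main step''. The decoupling subspaces you want do not exist. Since $V_\phi=\langle\phi,\phi\rangle=|\phi|^2$ with respect to the positive definite product, $V_\phi\equiv 0$ forces $\phi=0$, so the subspace with $V_\phi\equiv 0$ and $W_\phi$ sweeping the null cone is trivial. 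Symmetrically, the chirality condition $\overline e_0\cdot\overline e_n\cdot\phi=\pm\phi$ together with Remark \ref{rem_chiral} gives $W_\phi|_{\partial\mathbb H^n_+}=\pm V_\phi\,\overline e_n$, and since restriction to $\partial\mathbb H^n_+$ is injective on $\mathfrak K^+_b$ (its basis $\{L_{an}\}$ restricts to $\{V_{(a)}e_n\}$ with the $V_{(a)}$ linearly independent), $W_\phi=0$ again forces $\phi=0$. So no representation-theoretic computation can rescue this step: the two components of $\mathcal K(\phi)$ can never be separated.

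The actual mechanism is the opposite of decoupling: the two components are rigidly locked. Under the isomorphism $\mathfrak K^+_b\cong\mathcal N^+_b\cong\mathbb L^{1,n-1}$ of Proposition \ref{propo:K+} (induced by restriction to the boundary), the identity $W_\phi|_{\partial\mathbb H^n_+}=\pm V_\phi\,\overline e_n$ says precisely that $W_\phi$ corresponds to $\pm V_\phi$, the sign being the chirality sign of $\phi$. Hence on each chirality eigenspace $\widetilde{\mathcal K}(\phi)=\langle\langle \mathcal E\pm\mathcal P,V_\phi\rangle\rangle$, and the surjectivity statement of Proposition \ref{killingder} (valid for each sign) yields that $\mathcal E+\mathcal P$ and $\mathcal E-\mathcal P$ are both future-directed causal. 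This immediately gives $\mathcal E=\tfrac12\big((\mathcal E+\mathcal P)+(\mathcal E-\mathcal P)\big)$ future-directed causal; note that it does \emph{not} by itself give the causal character of $\mathcal P$ (a difference of future causal vectors need not be causal), which is why the paper also invokes the companion argument with $\overline{\mathcal D}^-$ in place of $\overline{\mathcal D}^+$. Any correct write-up must therefore replace your decoupling step by this sign-locking argument, and must address the extraction of $\mathcal P$ separately rather than by symmetry with $\mathcal E$.
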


\begin{remark}\label{hyptimesym}{\rm 
    If the initial data set in Theorem \ref{maintheohyp} is {time-symmetric} then the mass functional reduces to a map $\mathfrak m_{(g,0,F)}:\mathcal N_b^+\to\mathbb R$. This is precisely the situation studied in \cite{AdL}. 
Under the corresponding DECs, it follows from \cite[Theorem 5.4]{AdL} that $\mathfrak m_{(g,0,F)}$, viewed as an element of $\mathcal N_b^+$,  is causal  and future directed. In other words, there holds the mass inequality $\langle\langle \mathfrak m_{(g,0,F)},\mathfrak m_{(g,0,F)}\rangle\rangle\geq 0$, which clearly is the time-symmetric version of the conclusion $\widetilde{\mathcal K}\geq 0$ in the broader setting of
	Theorem \ref{maintheohyp}.
	Moreover, if $\widetilde{\mathcal K}=0$ then $\mathfrak m_{(g,0,F)}$ vanishes and the argument in \cite[Theorem 5.4]{AdL} implies that $(M,g,\Sigma)$ is isometric to $(\mathbb H^n_+,b,\partial \mathbb H^n_+)$. We note however that in \cite{AdL} this same 
	 isometry is achieved just by assuming that  $\mathfrak m_{(g,0,F)}$ is null (that is, lies on the null cone associated to (\ref{lorinner})).
	Anyway, the rigidity statement in Theorem \ref{maintheohyp} implies that $(M,g,0,\Sigma)$ is isometrically embedded in ${\rm AdS}^{n,1}_+$. We then conclude that in the time-symmetric case  the assumption $\widetilde{\mathcal K}=0$ actually implies that the embedding $M\hookrightarrow\overline M$ reduces to the totally geodesic embedding $\mathbb H^n_+\hookrightarrow {\rm AdS}^{n,1}_+$. }  
\end{remark}


\section{The energy-momentum vectors}\label{enermom}

In this section we indicate how the asymptotic invariants considered in the previous section are well defined in the appropriate sense. 

Let $(M,g,h,\Sigma)$ be an initial data set either asymptotically flat, as in Definition \ref{def:as:hyp}, or asymptotically hyperbolic, as in Definition \ref{def:as:hyp2}. In the former case the model will be $(\mathbb R^n_+,\delta, 0, \partial\mathbb R^n_+)$, and in the latter it will be  $(\mathbb H^n_+,b, 0, \partial\mathbb H^n_+)$. 
We will denote these models by $(\mathbb E^n_+, g_0, 0,\partial\mathbb E^n_+)$, so that $\mathbb E^n_+$ stands for either $\mathbb R^n_+$ or  $\mathbb H^n_+$, and $g_0$ by either $\delta$ or $b$.
In particular, Definitions \ref{def:as:hyp} and \ref{def:as:hyp2} ensure that $f=g-g_0$ has appropriate decay order. Finally, we denote by $\varrho_{g_0}$ the inward unit normal vector to $\partial\mathbb E^n_+$.

We define $\mathcal N^+_{g_0}$ as a subspace of the vector space of solutions $V\in C^{\infty}(\mathbb E^n_+)$ to
\begin{equation}\label{eq:static:gen}
\begin{cases}
\nabla_{g_0}^2V-(\Delta_{g_0}V)g_0-V\text{Ric}_{g_0}=0 &\text{in}\:\mathbb E^n_+,
\\[0.2cm]
\displaystyle\frac{\partial V}{\partial\varrho_{g_0}}\gamma_0+V\Pi_{g_0}=0 &\text{on}\:\partial\mathbb E^n_+,
\end{cases}
\end{equation}
chosen as follows. Set $\mathcal N^+_b$ to be the full space itself and set $\mathcal N^+_{\delta}$ to be the space of constant functions. So, $\mathcal N^+_b\cong \mathbb L^{1,n-1}$ and  $\mathcal N^+_{\delta}\cong \mathbb R$.
\begin{remark}
\rm{
Although the second fundamental form $\Pi_{g_0}$ of $\partial\mathbb E^n_+$ vanishes for either $g_0=\delta$ or $g_0=b$, we will keep this term in this section in order to preserve the generality in our calculations.  
}
\end{remark}

We define $\mathfrak K^+_{g_0}$ as a subspace of the space of $g_0$-Killing vector fields as follows. Choose $\mathfrak K^+_b$ as the subset of all such vector fields of $\mathbb H^n$ which are orthogonal to $\partial\mathbb H^n_+$. The Killing fields $\{L_{an}\}_{a=0}^{n-1}$ displayed in the proof of Proposition \ref{propo:K+} below constitute a basis for  $\mathfrak K^+_b$; see also Remark \ref{lapse-shift}. Choose $\mathfrak K^+_{\delta}$ as the translations of $\mathbb R^n$ which are tangent to $\partial \mathbb R^n_+$, so that $\mathfrak K^+_{\delta}$ is generated by 
$\{\partial_{x_A}\}_{A=1}^{n-1}$.

Let $F$ be a chart at infinity for $(M,g,h,\Sigma)$. As before, we identify $g$ and $h$ with their pull-backs by $F$, and set $f=g-g_0$.

\begin{definition}\label{def:charge}
For $(V, W)\in \mathcal N^+_{g_0}\oplus\mathfrak{K}_{g_0}^+$ we define the {\rm{charge density}} 
\begin{align}\label{defU}
\mathbb U_{(f,h)}(V,W)=\,&V\Big( {\rm div}_{g_0}f-d{\rm tr}_{g_0}f \Big) - \nabla_{g_0} V \righthalfcup f + (\tr_{g_0} f) d V 
\\
&+ 2\big( {W}\righthalfcup h-({\rm tr}_{g_0}h)W_\flat\big),\notag
\end{align}
where $W_\flat=g_0(\cdot, W)$,
and the {\rm{energy-momentum functional}}
\begin{align}\label{limit}
\mathfrak m_{(g,h,F)}(V,W)=\lim_{r\to\infty}
\left\{
\int_{S_{r,+}^{n-1}}\mathbb U_{(f,h)}(V,W)(\mu)+\int_{S_{r}^{n-2}}Vf(\varrho_{g_0},\vartheta)
\right\}.
\end{align}
\end{definition}
\noindent
{\bf{Agreement.}} In this section we are omitting the volume elements in the integrals, which are all taken with respect to $g_0$.
\begin{proposition}\label{propo:mass:int}
The limit in (\ref{limit}) exists. In particular, it defines a linear functional
$$
\mathfrak m_{(g,h,F)}:\mathcal N_{g_0}^+\oplus \mathfrak{K}_{g_0}^+\to \mathbb R.
$$
If $\widetilde F$ is another assymptotic coordinate system for $(M,g,h,\Sigma)$ then 
\begin{equation}\label{propo:mass:int:1}
\mathfrak m_{(g,h,\widetilde F)}(V,W)=m_{(g,h,F)}(V\circ A, A^*W)
\end{equation}
for some isometry $A:\mathbb E^n_+\to \mathbb E^n_+$ of $g_0$.
\end{proposition}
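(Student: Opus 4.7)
The plan is to realize the energy-momentum as a boundary flux by means of a divergence identity on a suitable exhaustion of $M_{\mathrm{ext}}$. I would set $\Omega_r = F(\{x \in \mathbb E^n_{+,r_0} : r_0 < |x| < r\})$ and apply Stokes' theorem to $\operatorname{div}_{g_0}\mathbb U_{(f,h)}(V,W)$ on $\Omega_r$. The boundary of $\Omega_r$ decomposes into the outer hemisphere $S^{n-1}_{r,+}$ (carrying the flux with outward unit normal $\mu$), a fixed inner hemisphere $S^{n-1}_{r_0,+}$ (contributing an $r$-independent finite constant), and the flat portion $\Sigma_r \subset \partial\mathbb E^n_+$ (carrying a flux with outward unit co-normal $-\varrho_{g_0}$). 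The edge correction $\int_{S^{n-2}_r} V f(\varrho_{g_0},\vartheta)$ in \eqref{limit} will arise from integration by parts along $\Sigma_r$, with $S^{n-2}_r = S^{n-1}_{r,+} \cap \Sigma_r$ playing the role of the corner.

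The central step is the divergence identity
\begin{equation*}
\operatorname{div}_{g_0}\mathbb U_{(f,h)}(V,W) = 2\bigl(V\rho_\Lambda + \langle W, J\rangle_{g_0}\bigr) + \mathcal E_{\mathrm{int}}(V,W;f,h),
\end{equation*}
where $\mathcal E_{\mathrm{int}}$ gathers terms at least quadratic in $(f,\nabla f, h)$. For the $V$-part this is the classical Regge--Teitelboim linearisation of the scalar curvature, available because $V\in\mathcal N^+_{g_0}$ solves \eqref{eq:static:gen}; for the $W$-part it is the corresponding linearisation of the momentum constraint, available because $W\in\mathfrak K^+_{g_0}$ is Killing for $g_0$, so that $\nabla^{g_0}W$ is skew and cancels against the Codazzi piece. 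The decay hypotheses \eqref{asympthyp}, respectively \eqref{asympthyp2}, are tuned precisely so that $\mathcal E_{\mathrm{int}} \in L^1(M_{\mathrm{ext}})$, respectively is integrable against the weight $|y|$ (which appears in \eqref{intcond2} because $V,W$ grow linearly at infinity, forcing the threshold $\kappa > n/2$); the main term is then integrable by \eqref{intcond}/\eqref{intcond2}.

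An analogous identity along $\Sigma_r$ takes the form
\begin{equation*}
\mathbb U_{(f,h)}(V,W)(-\varrho_{g_0}) = 2\bigl(V H_g + \langle W, \varrho\righthalfcup\pi\rangle^{\bullet}\bigr) + \operatorname{div}_{\partial\mathbb E^n_+}\!\bigl(Vf(\varrho_{g_0},\,\cdot\,)\bigr) + \mathcal E_{\mathrm{bdy}},
\end{equation*}
with $\bullet = {\downvdash}$ when $\Lambda=0$ and $W\in\mathfrak K^+_\delta$ is tangent to $\partial\mathbb R^n_+$, and $\bullet = {\upvdash}$ when $\Lambda<0$ and $W\in\mathfrak K^+_b$ is normal to $\partial\mathbb H^n_+$; the Neumann-type condition on $V$ in \eqref{eq:static:gen} is precisely what makes the $\Lambda<0$ rearrangement collapse to the normal component. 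Tangential Stokes on $\Sigma_r$ then converts the $\operatorname{div}_{\partial\mathbb E^n_+}$ term into the edge integral over $S^{n-2}_r$, while the remainders $\mathcal E_{\mathrm{bdy}}$ are controlled by \eqref{asympthyp}/\eqref{asympthyp2} and the contribution of $VH_g + \langle W,\varrho\righthalfcup\pi\rangle^{\bullet}$ is finite by \eqref{intcond}/\eqref{intcond2}. Summing the interior and boundary pieces and letting $r\to\infty$ proves the existence of the limit, and linearity in $(V,W)$ is manifest from \eqref{defU}.

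For the change-of-chart formula, write $\Phi = \widetilde F \circ F^{-1}$, a diffeomorphism of a neighbourhood of infinity in $\mathbb E^n_+$ preserving $\partial\mathbb E^n_+$. A Bartnik-type argument (for $g_0=\delta$) or a Chru\'sciel--Herzlich-type argument (for $g_0=b$), adapted to keep track of the boundary, shows that $\Phi$ is asymptotic to an isometry $A$ of $(\mathbb E^n_+,g_0)$ preserving $\partial\mathbb E^n_+$, with $\Phi - A$ decaying one order faster than $f$. Pulling back \eqref{defU} by $\Phi$ and using the covariance of $\mathbb U$ under $g_0$-isometries, the lower-order error in $\Phi - A$ produces a boundary integrand whose flux through $S^{n-1}_{r,+}\cup S^{n-2}_r$ vanishes in the limit, yielding \eqref{propo:mass:int:1}. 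I expect the principal obstacle to be the edge bookkeeping of the preceding paragraph: one must track signs and orientations so that the tangential integration by parts along $\Sigma_r$ returns exactly $Vf(\varrho_{g_0},\vartheta)$ and no spurious piece, and do so uniformly in the tangential ($\Lambda=0$) and normal ($\Lambda<0$) regimes, where $W$ and $V$ play structurally different roles in selecting $(\varrho\righthalfcup\pi)^{\downvdash}$ versus $(\varrho\righthalfcup\pi)^{\upvdash}$.
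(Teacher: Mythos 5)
Your treatment of the existence of the limit is essentially the paper's argument: linearize the constraint maps around the model, use the integration-by-parts identity $\langle D\Psi_{(g_0,0)}(f,h),(V,W)\rangle = {\rm div}_{g_0}\mathbb U_{(f,h)}(V,W) + \langle (f,h),\mathscr F(V,W)\rangle$ (with the adjoint term killed because $V$ is static and $W$ is Killing), integrate over an annular region, and observe that the flux through $\Sigma_{r,r'}$ rearranges into $\langle -dV(\varrho_{g_0})\gamma_0 - V\Pi_{g_0}, f\rangle$ (which vanishes by the boundary condition in \eqref{eq:static:gen}) plus a tangential divergence ${\rm div}_{\gamma_0}(V(\varrho_{g_0}\righthalfcup f)^{\downvdash})$ whose Stokes contribution is exactly the edge correction over $S^{n-2}_r$. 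Your bookkeeping of which component of $\varrho\righthalfcup\pi$ survives the pairing with $W$ in the two regimes, and of why the weight $|y|$ appears in \eqref{intcond2}, is also consistent with the paper.

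The gap is in the change-of-chart argument. You assert that $\Phi-A$ decays \emph{one order faster} than $f$ and that consequently the resulting error in the integrand has vanishing flux in the limit. This is not what the structure theorem gives: Lemma \ref{lem:rigidity} (from [AdL]) yields $\Phi = A + O(|y|^{-\kappa})$, i.e.\ the residual diffeomorphism $\zeta$ (with $\Phi=\exp\circ\zeta$ after normalizing $A={\rm id}$) decays at the \emph{same} rate as $f$, and hence $f_2-f_1 = \mathcal L_\zeta g_0 + O(|y|^{-2\kappa})$ contains a term of exactly the same order as $f$. The flux of $\mathbb U_{(\mathcal L_\zeta g_0,0)}(V,W)$ through $S^{n-1}_{r,+}$ does \emph{not} tend to zero on its own; the whole content of the paper's Lemma \ref{lem:cancellation} is that this quantity equals ${\rm div}_{g_0}\mathbb V$ for an explicit \emph{antisymmetric} two-tensor $\mathbb V$, so that Stokes converts the hemisphere flux into a flux through $\Sigma_r$, a further tangential integration by parts converts that into an edge integral $\int_{S^{n-2}_r}\mathbb V(\varrho_{g_0},\vartheta)$, and this edge integral is computed to equal $\int_{S^{n-2}_r}V(f_1-f_2)(\varrho_{g_0},\vartheta)$ --- i.e.\ it exactly cancels the change in the corner correction term of \eqref{limit}, rather than vanishing. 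Without identifying this cancellation mechanism (and the antisymmetry of $\mathbb V$ that drives it), the well-definedness claim \eqref{propo:mass:int:1} is not established; decay alone only disposes of the quadratic remainder $O(|y|^{-2\kappa})$.
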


Before proceeding to the proof of Proposition \ref{propo:mass:int}, we state two immediate corollaries, one for the asymptotically flat case and the other for the asymptotically hyperbolic one.

In the the asymptotically flat case, observe that the energy-momentum vector $(E,P)$ of Definition \ref{massdef} is given by
\begin{equation*}
\begin{array}{cc}
E=\mathfrak m_{(g,h, F)}(1,0), & P_A=\mathfrak m_{(g,h, F)}(0,\partial_{x_A}), \:\:\:A=1,...,n-1.
\end{array}
\end{equation*}
\begin{corollary}\label{welldef}
	If $(M,g,h,\Sigma)$ is an asymptotically flat initial data, then $(E,P)$  is well defined (up to composition with an element of ${\rm SO}_{n-1,1})$.
 In particular, the causal character of $(E,P)\in \mathbb L^{n-1,1}$ and the quantity 
	\[
	\langle(E,P),(E,P)\rangle=-E^2+P^2_1+...+P^2_{n-1}
	\] 
	do not depend on the chart $F$ at infinity chosen to compute $(E,P)$.
\end{corollary}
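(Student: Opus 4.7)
The plan is to derive Corollary \ref{welldef} as a direct consequence of the transformation rule (\ref{propo:mass:int:1}) of Proposition \ref{propo:mass:int}, once one makes explicit the group of admissible changes of coordinates at infinity. In the asymptotically flat case $g_0 = \delta$, and any $\delta$-isometry $A$ of $\mathbb R^n_+$ must preserve both the Euclidean metric and the half-space, hence must preserve $\partial\mathbb R^n_+ = \{x_n=0\}$. Classical Euclidean rigidity then forces
\[
A(x',x_n) = (Bx'+a,\,x_n), \qquad B \in O(n-1),\ a \in \mathbb R^{n-1},
\]
so that, writing the linear part as $L = \mathrm{diag}(B,1)$, the induced action on translation-invariant (constant) vector fields is simply left multiplication by $L^{-1}$.

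Next I would evaluate (\ref{propo:mass:int:1}) on the two types of generators of $\mathcal N^+_\delta \oplus \mathfrak K^+_\delta = \mathbb R \oplus \mathrm{span}\{\partial_{x_A}\}_{A=1}^{n-1}$. Taking $V \equiv 1$ and $W = 0$, one has $V \circ A \equiv 1$, so the energy $E = \mathfrak m_{(g,h,F)}(1,0)$ is chart-independent. Taking $V=0$ and $W = \partial_{x_C}$, the pullback computes as $A^*\partial_{x_C} = \sum_D B_{CD}\partial_{x_D}$, so the momentum transforms as $P_{\widetilde F} = B\,P_F$. Altogether, any change of chart induces the transformation $(E,P) \mapsto (E,\,BP)$ on $\mathbb L^{n-1,1}$.

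Finally, the map $(E,P) \mapsto (E,\,BP)$ with $B \in O(n-1)$ is precisely the restriction of the standard embedding $O(n-1) \hookrightarrow \mathrm{SO}_{n-1,1}$ as the isotropy subgroup of the timelike direction $e_0$. This already shows that $(E,P)$ is well defined modulo a Lorentz transformation, and since $\mathrm{SO}_{n-1,1}$ preserves both the Minkowski inner product and the light cones, the quadratic form $-E^2 + \sum_A P_A^2$ and the causal character of $(E,P)$ depend only on $(M,g,h,\Sigma)$ and not on the chart $F$. I do not expect any serious obstacle here: the analytic heart of the statement (existence of the limit in (\ref{limit}) and the transformation rule under change of chart) is entirely absorbed into Proposition \ref{propo:mass:int}, and the remaining work consists of algebraic bookkeeping, with the only mild care required being the convention for the pullback $A^*W$ on Killing fields.
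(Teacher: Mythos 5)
Your proposal is correct and follows the same route as the paper: the corollary is stated there as an immediate consequence of Proposition \ref{propo:mass:int}, with the identification $E=\mathfrak m_{(g,h,F)}(1,0)$, $P_A=\mathfrak m_{(g,h,F)}(0,\partial_{x_A})$, and the observation that half-space-preserving Euclidean isometries act on $(E,P)$ through the orthogonal group of the boundary, hence through isometries of $\mathbb L^{n-1,1}$ fixing the timelike direction. The only cosmetic remark is that for $\det B=-1$ the induced map lies in ${\rm O}_{n-1,1}$ rather than ${\rm SO}_{n-1,1}$, which of course still preserves the causal character and the quadratic form.
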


In the asymptotically hyperbolic case, observe that the functional $\mathfrak m_{(g,h,F)}$ coincides with the one of Definition \ref{massdef2}. In order to understand the space $\mathfrak K^+_b$, we state the following result:
\begin{proposition}\label{propo:K+}
If $\mathfrak {Kill}(\mathbb H^n)$ is the space of Killing vector fields on $\mathbb H^n$, then there are isomorphisms 
$$
\begin{array}{cc}
\mathfrak {Kill}(\mathbb H^n)\cong \mathfrak K_b^+\oplus \mathfrak{Kill}(\mathbb H^{n-1}), & \mathfrak K_b^+\cong \mathbb L^{1,n-1}.
\end{array}
$$
Moreover, the space $\text{Isom} (\mathbb H^n_+)$ of isometries of $\mathbb H^n_+$ acts on $\mathfrak {Kill}(\mathbb H^n)$ preserving the decomposition $\mathfrak K_b^+\oplus \mathfrak{Kill}(\mathbb H^{n-1})$. In particular, $\text{Isom} (\mathbb H^n_+)$ acts on $\mathfrak K^+_b$ by isometries of $\mathbb L^{1,n-1}$.
\end{proposition}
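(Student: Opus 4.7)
The plan is to realize $\mathbb H^n$ as the upper sheet $\{x\in\mathbb L^{n,1}:\langle x,x\rangle=-1,\ x_0>0\}$ of the two-sheeted hyperboloid, so that $\mathbb H^n_+=\mathbb H^n\cap\{x_n\ge 0\}$ and $\partial\mathbb H^n_+=\mathbb H^n\cap\{x_n=0\}$ is canonically identified with $\mathbb H^{n-1}\subset\mathbb L^{n-1,1}$. Under this model there is the classical isomorphism $\mathfrak{Kill}(\mathbb H^n)\cong\mathfrak{so}(n,1)$, realized by the standard basis $\{L_{\alpha\beta}\}_{0\le\alpha<\beta\le n}$ obtained by restricting the infinitesimal Lorentz rotations in the $(\alpha,\beta)$-planes of $\mathbb L^{n,1}$ to the hyperboloid. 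Everything below is read off this explicit basis.

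To establish the direct sum decomposition, I split the basis into the $L_{ab}$ with $0\le a<b\le n-1$ and the $L_{an}$ with $0\le a\le n-1$. The first collection preserves the hyperplane $\{x_n=0\}$ (it contains no $\partial_{x_n}$), so it is tangent to $\partial\mathbb H^n_+$ everywhere, and its restrictions yield a full basis of $\mathfrak{Kill}(\mathbb H^{n-1})\cong\mathfrak{so}(n-1,1)$; a quick dimension count $\binom{n}{2}=\dim\mathfrak{so}(n-1,1)$ confirms fullness. The second collection, evaluated at any point $p\in\partial\mathbb H^n_+$, reduces to a nonzero multiple of $\partial_{x_n}|_p$, which spans the one-dimensional orthogonal complement of $T_p\partial\mathbb H^n_+$ inside $T_p\mathbb H^n$. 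Hence $\{L_{an}\}_{a=0}^{n-1}$ generates $\mathfrak K_b^+$, it has dimension $n$, and the direct sum $\mathfrak{Kill}(\mathbb H^n)=\mathfrak K_b^+\oplus\mathfrak{Kill}(\mathbb H^{n-1})$ follows.

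To make the identification $\mathfrak K_b^+\cong\mathbb L^{1,n-1}$ canonical, I equip $\mathfrak{so}(n,1)$ with the natural $\mathrm{Ad}$-invariant symmetric bilinear form induced by the Minkowski inner product on $\mathbb L^{n,1}$ (equivalently, a fixed multiple of the Killing form). A short computation shows the basis $\{L_{an}\}_{a=0}^{n-1}$ is orthogonal for this form, with $L_{0n}$ timelike and $L_{An}$ ($1\le A\le n-1$) spacelike, producing signature $(+,-,\ldots,-)$ --- exactly the form $\langle\langle\cdot,\cdot\rangle\rangle$ on $\mathbb L^{1,n-1}$. Declaring $L_{0n}$ future directed fixes the time orientation and completes the Lorentzian isomorphism.

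For the final statement, $\text{Isom}(\mathbb H^n_+)$ is identified with the stabilizer in $O^+(n,1)$ of the hyperplane $\{x_n=0\}$, which factors as $O^+(n-1,1)\times O(1)$, the $O(1)$ factor flipping $x_n\mapsto-x_n$. Its action on $\mathfrak{so}(n,1)$ is by the adjoint representation, which manifestly preserves the $\mathrm{Ad}$-invariant form, and a direct block computation shows it stabilizes each summand of the decomposition. On $\mathfrak K_b^+$ the $O^+(n-1,1)$ factor acts through its defining representation on the subscript $a$ of $L_{an}$, while the $O(1)$ factor acts by $\pm1$; both are Lorentz isometries of $\mathbb L^{1,n-1}$. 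The one subtlety I anticipate is checking that the Lorentzian structure on $\mathfrak K_b^+$ is truly canonical rather than dependent on the chosen hyperboloid realization, but this is automatic once one observes that the $\mathrm{Ad}$-invariant form on $\mathfrak{so}(n,1)$ is intrinsic to the Lie algebra, so any two realizations --- related by an element of $\text{Isom}(\mathbb H^n_+)$ --- give identifications differing by a Lorentz isometry.
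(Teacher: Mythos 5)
Your proof is correct, and its first half --- realizing $\mathbb H^n$ as the upper hyperboloid sheet, writing down the basis $\{L_{\alpha\beta}\}$ of $\mathfrak{so}(n,1)$, restricting to $\{x_n=0\}$, and splitting off the $n$ fields $L_{an}$ whose boundary restrictions are multiples of $\partial_{x_n}$ --- is exactly the computation in the paper (a pedantic point: each individual $L_{An}\big|_{x_n=0}=x_A\partial_{x_n}$ vanishes where $x_A=0$, so ``nonzero multiple'' should read ``multiple''; orthogonality to $\partial\mathbb H^n_+$ is unaffected). Where you genuinely diverge is in the second half. The paper obtains the Lorentzian structure on $\mathfrak K^+_b$ by matching $L_{an}\mapsto V_{(a)}=x_a|_{\mathbb H^n_+}$, using that $L_{an}|_{\partial\mathbb H^n_+}=V_{(a)}\partial_{x_n}$, thereby transferring to $\mathfrak K^+_b$ the structure already placed on $\mathcal N^+_b$ following \cite{CH} (the basis $\{V_{(a)}\}$ is declared orthonormal, $V_{(0)}$ future directed), and it defers the equivariance statement to the earlier discussion citing \cite{AdL}. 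You instead restrict the $\mathrm{Ad}$-invariant trace form of $\mathfrak{so}(n,1)$ to $\mathrm{span}\{L_{an}\}$, verify that the basis is orthogonal with $L_{0n}$ a (timelike) boost and the $L_{An}$ (spacelike) rotations, and then obtain the invariance under $\text{Isom}(\mathbb H^n_+)$ for free from $\mathrm{Ad}$-invariance together with the block decomposition of the stabilizer of $\{x_n=0\}$. The two Lorentzian structures coincide after normalization (both make $\{L_{an}\}$ orthogonal of signature $(+,-,\dots,-)$ with $L_{0n}$ future timelike), so your route proves the same statement; its advantage is that the metric on $\mathfrak K^+_b$ and its invariance become intrinsic to the Lie algebra rather than inherited from $\mathcal N^+_b$, while its (mild) cost is that it does not exhibit the explicit isomorphism $\mathfrak K^+_b\cong\mathcal N^+_b$, which is what the paper actually exploits later (e.g.\ in Remark \ref{lapse-shift} and Proposition \ref{killingder}).
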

\begin{proof}
Observe that $\mathfrak {Kill}(\mathbb H^n)$ is generated by 
$$
\begin{array}{cc}
L_{0j}=x_0\partial_{x_j}+x_j\partial_{x_0}, & L_{ij}=x_i\partial_{x_j}-x_j\partial_{x_i},\: i<j,
\end{array}
$$
where $x_0,...,x_n$ are the coordinates of $\mathbb L^{n,1}$, and $\mathbb H^n\hookrightarrow \mathbb L^{n,1}$ is represented in the hyperboloid model. Restricting to $x_n=0$ we obtain
$$
L_{0B}\big|_{x_n=0}=x_0\partial_{x_{B}}+x_{B}\partial_{x_0}, \quad L_{AB}\big|_{x_n=0}=x_{A}\partial_{x_{B}}-x_{B}\partial_{x_{A}}, \quad  A<B,
$$
and 
\[
L_{0n}\big|_{x_n=0}=x_0\partial_{x_n}, \quad L_{An}\big|_{x_n=0}=x_A\partial_{x_n}, \quad A,B=1,...,n-1.
\]
This shows that $\{L_{an}|_{\mathbb H^n_+}\}_{a=0}^{n-1}$ is a basis for $\mathfrak K^+_b$. Moreover, since $V_{(a)}=x_a|_{\mathbb H^n_+}$, we obtain the isomorphisms $\mathfrak K_b^+\cong \mathcal N_b^+\cong\mathbb L^{1,n-1}$.
\end{proof}
	\begin{remark}\label{lapse-shift}
		{\rm We may also provide an explicit basis for $\mathfrak K^+_b$ in terms of the  Poincar\'e half-ball model
		\[
		\mathbb H^n_+=\{z=(z_1,\cdots,z_n)\in\mathbb R_n; |z|<1,z_n\geq 0\}
		\]
	with boundary $\partial\mathbb H^n_+=\{z\in\mathbb H^n_+;z_n=0\}$. In this representation, 
	\[
	b=\frac{4}{(1-|z|^2)^2}\delta,
	\]
	and the anti-de Sitter space ${\rm AdS}_+^{n,1}=\mathbb R\times \mathbb H^n_+$ is endowed with the metric
	\[
	\overline b=-\left(\frac{1+|z|^2}{1-|z|^2}\right)^2dz_{0}^2+b,\quad z_{0}\in\mathbb R.  
	\]
	It follows that 
	$
	\mathfrak K_b^+
	$
	is generated by 
	\[
	L_{0n}=\frac{1+|z|^2}{2}\partial_{z_n}-z_nz_j\partial_{z_j}, \quad L_{0n}|_{\partial\mathbb H^n_+}=\underbrace{\frac{1+|z|^2}{1-|z|^2}}_{=V_{(0)}}e_n,
	\]
	and
	\[
	L_{An}=z_A\partial_{z_n}-z_n\partial_{z_A},\quad L_{An}|_{\partial\mathbb H^n_+}=\underbrace{\frac{2z_A}{1-|z|^2}}_{=V_{(A)}}e_n.
	\]
	}
	\end{remark}

We define the energy-momentum vector $(\mathcal E, \mathcal P)\in \mathcal N^+_b\oplus\mathfrak K^+_b\cong\mathbb L^{1,n-1}\oplus \mathbb L^{1,n-1}$ by
\begin{equation}\label{pair:vect}
\begin{array}{cc}
\mathcal E_{a}=\mathfrak m_{(g,h, F)}(V_{(a)},0), & \mathcal P_{a}=\mathfrak m_{(g,h, F)}(0,W_{(a)}),
\end{array}
\end{equation}
where $V_{(a)}=x_a|_{\mathbb H^n_+}$ and $W_{(a)}=L_{an}|_{\mathbb H^n_+}$, $a=0,...,n-1$, are the generators of $\mathcal N^+_b$ and $\mathfrak K^+_b$, respectively.
\begin{corollary}
	If $(M,g,h,\Sigma)$ is an asymptotically hyperbolic initial data, then $\mathcal E$ and $\mathcal P$ are well defined up to composition with an element of ${\rm SO}_{1,n-1}$.
 In particular, the causal characters of $\mathcal E$ and $\mathcal P$ and the quantities
\begin{equation*}
\begin{array}{cc}
	\langle\langle \mathcal E,\mathcal E\rangle\rangle=\mathcal E_0^2-\mathcal E_1^2-...-\mathcal E_{n-1}^2,
	&
	\langle\langle \mathcal P,\mathcal P\rangle\rangle=\mathcal P_0^2-\mathcal P_1^2-...-\mathcal P_{n-1}^2
\end{array} 
\end{equation*}
	do not depend on the chart $F$ at infinity chosen to compute $(\mathcal E,\mathcal P)$.
\end{corollary}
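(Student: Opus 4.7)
The plan is to combine Proposition \ref{propo:mass:int} with Proposition \ref{propo:K+} and the analogous statement for $\mathcal N_b^+$ recalled above Theorem \ref{maintheoohyp2} (following \cite{CH,AdL}); these say respectively that a change of chart amounts to precomposing the arguments of $\mathfrak m_{(g,h,\cdot)}$ with an isometry $A\in\mathrm{Isom}(\mathbb H^n_+)$, and that such isometries act on both $\mathcal N_b^+$ and $\mathfrak K_b^+$ by Lorentz transformations of $\mathbb L^{1,n-1}$. Concretely, applying (\ref{propo:mass:int:1}) to the generators $V_{(a)}=x_a|_{\mathbb H^n_+}$ of $\mathcal N_b^+$ and $W_{(a)}=L_{an}|_{\mathbb H^n_+}$ of $\mathfrak K_b^+$, the components of $(\mathcal E,\mathcal P)$ computed from a second chart $\widetilde F$ are related to the original ones by
\begin{equation*}
\widetilde{\mathcal E}=\Lambda\,\mathcal E,\qquad \widetilde{\mathcal P}=\Lambda'\,\mathcal P,
\end{equation*}
where $\Lambda$ and $\Lambda'$ are the matrices representing the action of $A$ on $\mathcal N_b^+$ and $\mathfrak K_b^+$ in the chosen bases.

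I would then argue that $\Lambda$ and $\Lambda'$ both lie in ${\rm SO}_{1,n-1}$ and in fact coincide. Working in the hyperboloid model $\mathbb H^n\hookrightarrow \mathbb L^{n,1}$, every $A\in\mathrm{Isom}(\mathbb H^n_+)$ is the restriction of a linear element of $O(n,1)$ preserving both the future sheet $\{x_0>0\}$ of the hyperboloid and the hyperplane $\{x_n=0\}$; such a transformation preserves the Lorentzian subspace $\mathrm{span}\{\partial_{x_0},\ldots,\partial_{x_{n-1}}\}\cong \mathbb L^{1,n-1}$. Because both $\{V_{(a)}\}$ and $\{W_{(a)}\}$ are indexed by precisely these ambient coordinates (the $V_{(a)}$ directly, and the $W_{(a)}=L_{an}|_{\mathbb H^n_+}$ as infinitesimal rotations in the $(x_a,x_n)$-planes), the induced matrices $\Lambda$ and $\Lambda'$ agree and define a single Lorentz transformation of $\mathbb L^{1,n-1}$. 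The statements that $\Lambda$ preserves the form (\ref{lorinner}) are exactly the last assertion of Proposition \ref{propo:K+} for $\mathfrak K_b^+$ and the corresponding statement of \cite{AdL,CH} for $\mathcal N_b^+$.

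Consequently $(\mathcal E,\mathcal P)$ is chart-independent up to the diagonal action of ${\rm SO}_{1,n-1}$, and the invariance of (\ref{lorinner}) and of its future cone under this group yields the chart-independence of the Lorentzian squared norms $\langle\langle\mathcal E,\mathcal E\rangle\rangle$, $\langle\langle\mathcal P,\mathcal P\rangle\rangle$ as well as of the causal characters of $\mathcal E$ and $\mathcal P$. The only delicate point in this scheme is the time-orientation bookkeeping, namely verifying that $\Lambda$ actually lies in the identity component ${\rm SO}_{1,n-1}$ and not merely in $O_{1,n-1}$; this follows from the fact that isometries of $\mathbb H^n_+$ preserve both the future time cone (by being isometries of the hyperboloid $\mathbb H^n$) and the inward normal to $\partial\mathbb H^n_+$ (by preserving the half-space together with its induced orientation), so that the time orientation fixed by $V_{(0)}$ future directed and by the sign convention for $L_{0n}$ in Remark \ref{lapse-shift} is respected.
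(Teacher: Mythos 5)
Your argument is correct and follows the same route as the paper, which states this corollary as an immediate consequence of Proposition \ref{propo:mass:int} (the chart-change formula $\mathfrak m_{(g,h,\widetilde F)}(V,W)=\mathfrak m_{(g,h,F)}(V\circ A,A^*W)$) together with Proposition \ref{propo:K+} and the corresponding action of $\mathrm{Isom}(\mathbb H^n_+)$ on $\mathcal N_b^+$ from \cite{AdL,CH}. Your additional claim that $\Lambda$ and $\Lambda'$ coincide is not actually needed (and, depending on pullback/pushforward conventions, the two matrices may differ by the transpose-inverse, which is again orthochronous Lorentz); all that matters is that each lies in ${\rm SO}_{1,n-1}$, which you establish.
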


The rest of this section is devoted to the proof of Proposition \ref{propo:mass:int}.
Define the constraint maps  $\Psi_\Lambda$ and $\Phi$  as in Section \ref{state}, namely
		\begin{equation*}
		\begin{array}{l}
		\disp \Psi_\Lambda:\mathcal M\times \mathcal S_2(M)\to C^{\infty}(M)\times\Gamma(T^*M), 
	\qquad \Phi 	: \mathcal{M} \times \mathcal{S}_2(M) \to C^{\infty}(\Sigma)\times \Gamma(T^*M|_{\Sigma}) \\[0.5cm]
		\Psi_\Lambda(g, h)
		=\left(
		\begin{array}{c}
		R_{ g}-2\Lambda-|h|^2_{ g}+({\rm tr}_{g}h)^2\\
		2\left(
		{\rm div}_{ g}h-d{\rm tr}_{ g}h
		\right)
		\end{array}
		\right), \qquad
		\Phi(g,h)=\left(
		\begin{array}{c}
		2H_g\\
		2 {{\varrho \righthalfcup (h-(\text{tr}_g h)g)}}
		\end{array}
		\right),
		\end{array}
		\end{equation*}
where $\mathcal S_2(M)$ is the space of symmetric bilinear forms on $M$, $\mathcal M\subset \mathcal S_2(M)$ is the cone of (positive definite) metrics and $\Lambda=0$ or $\Lambda=\Lambda_n$ according to the case.

We  follow the perturbative approach in \cite{Mi}. For notational simplicity we omit from now on the reference to the cosmological constant  in $\Psi_\Lambda$. 
Thus, in the asymptotic region we expand the constraint maps $(\Psi,\Phi)$ around $(\mathbb E^n_+,g_0, 0, \partial\mathbb E^n_+)$ to deduce that
\begin{equation}\label{eq:linear:appr}
\begin{array}{lcl}
\disp \Psi(g,h)& = & D\Psi_{(g_0,0)}(f,h)+\mathfrak R_{(g_0,0)}(f,h), \\[0.3cm]
\disp \Phi(g,h) & = & D\Phi_{(g_0,0)}(f,h) + \widetilde{\mathfrak R}_{(g_0,0)}(f,h),
\end{array}
\end{equation}
where $\mathfrak R_{(g_0,0)}$ and $\widetilde{\mathfrak R}_{(g_0,0)}$ are remainder terms that are at least quadratic in $(f,h)$ and 
\[
\begin{array}{cc}
\disp D\Psi_{(g_0,0)}(f,h)=\frac{d}{dt}\Big|_{t=0}\Psi(g_0+tf,th), &
\disp D\Phi_{(g_0,0)}(f,h)=\frac{d}{dt}\Big|_{t=0}\Phi(g_0+tf,th).
\end{array}
\]
Using formulas for metric variation in \cite{Mi, CEM}, we get
\begin{equation}\label{DpsiDphi}
\begin{array}{lcl}
\disp D\Psi_{(g_0,0)}(f,h)
& = & \disp 
\left(
\begin{array}{c}
{\rm div}_{g_0}({\rm div}_{g_0}f-d{\rm tr}_{g_0}f)-\langle\text{Ric}_{g_0},f\rangle_{g_0}\\
2({\rm div}_{g_0}h-d{\rm tr}_{g_0}h)
\end{array}
\right) \\[0.5cm]
D\Phi_{(g_0,0)}(f, h) & = & \disp \left(
\begin{array}{c}
\left( {\rm div}_{g_0}f - d{\rm tr}_{g_0}f\right)(\varrho_{g_0}) + {\rm div}_{\gamma_0} ((\varrho_{g_0} \righthalfcup f)^\downvdash)-\langle\Pi_{g_0},f\rangle_{\gamma_0}\\
2\varrho_{g_0} \righthalfcup (h-(\text{tr}_{g_0}h)g_0)
\end{array}
\right).
\end{array}
\end{equation}
As in \cite{Mi}, we obtain
\begin{equation}\label{inteparts}
\begin{array}{lcl}
\disp \langle D\Psi_{(g_0,0)}(f,h), (V,W) \rangle & = & \disp {\rm div}_{g_0} \big(\mathbb U_{(f,h)}(V,W)\big) + \langle (f,h), \mathscr{F}(V,W) \rangle_{g_0}, \\[0.3cm]
\end{array}
\end{equation}
where $\mathbb U_{(f,h)} (V,W)$ is defined by (\ref{defU}) and
\begin{equation}\label{Psistar}
\begin{array}{l}
\disp \mathscr{F}(V,W)=\left(
\begin{array}{c}
\nabla^2_{g_0}V-(\Delta_{g_0}V)g_0-V\text{Ric}_{g_0}\\
-\mathcal L_{W}g_0+2({\rm div}_{g_0}W)g_0
\end{array}
\right),
\end{array}
\end{equation}
is the formal adjoint of $D\Psi_{(g_0,0)}$. 

For large $r$ we set 
$S^{n-1}_{r,+}=\{x\in \mathbb E_+^n;|x|=r\}$,
where $|x|=\sqrt{x_1^2+...+x_n^2}$, and for $r'>r$ we define
\begin{equation*}
\begin{array}{cc}
A_{r,r'}=\{x\in \mathbb E^n_+;r\leq |x| \leq r'\}\,, & \Sigma_{r,r'}=\{x\in \partial \mathbb E_+^n;r\leq |x| \leq r'\}
\end{array}
\end{equation*}
so that 
$$\d A_{r,r'}=S^{n-1}_{r,+}\cup \Sigma_{r,r'}\cup S^{n-1}_{r',+}.$$ 
We represent by $\mu$ the outward unit normal vector field to $S^{n-1}_{r,+}$ or $S^{n-1}_{r',+}$, computed with respect to the reference metric $g_0$. Also, we consider $S^{n-2}_r=\partial S_{r,+}^{n-1} \subset  \partial \mathbb E^n_{+,r}$; see Figure \ref{fig1}. 
Using \eqref{eq:linear:appr}, \eqref{inteparts} and the divergence theorem, we have that 
\[
\mathcal A_{r,r'}(g,h):=\int_{A_{r,r'}} \Psi(g,h)(V,W) + \int_{\Sigma_{r,r'}} \Phi(g,h)(V,W)
\]
is given by 
\begin{eqnarray*}
\mathcal A_{r,r'}(g,h) & = & 
 \int_{S^{n-1}_{r',+}} \mathbb U_{(f,h)}(V,W)(\mu) - \int_{S^{n-1}_{r,+}} \mathbb U_{(f,h)}(V,W)(\mu)\nonumber \\
& &  + \int_{\Sigma_{r,r'}} \Big[ \langle D\Phi_{(g_0,0)}(f,h), (V,W) \rangle - \mathbb U_{(f,h)}(V,W)(\varrho_{g_0})\Big] \\
& &  + \int_{A_{r,r'}} \langle (f,h), \mathscr{F}(V,W) \rangle + \int_{A_{r,r'}} \mathfrak R_{(g_0,0)}(f,h)\\
& &  + \int_{\Sigma_{r,r'}} \widetilde{\mathfrak R}_{(g_0,0)}(f,h).
\end{eqnarray*}
From \eqref{DpsiDphi} and \eqref{defU}, the third integrand in the right-hand side above can be written as  
\[
f(\varrho_{g_0}, \nabla_{g_0} V - \nabla_{\gamma_0} V)  - (\tr_{g_0} f) d V(\varrho_{g_0}) -\langle V\Pi_{g_0}, f\rangle_{\gamma_0}+ {\rm div}_{\gamma_0}(V(\varrho_{g_0} \righthalfcup f)^\downvdash),
\]
which clearly equals 
\[
\langle - d V(\varrho_{g_0}) {\gamma_0} -V\Pi_{g_0}, f\rangle_{\gamma_0}+ {\rm div}_{\gamma_0}(V(\varrho_{g_0} \righthalfcup f)^\downvdash).
\]
Plugging this into the expression for $\mathcal A_{r,r'}$ above and using the divergence theorem, we eventually get
\begin{eqnarray*}
\mathcal A_{r,r'}(g,h) & = &	
 \int_{S^{n-1}_{r',+}} \mathbb U_{(f,h)}(V,W)(\mu) - \int_{S^{n-1}_{r,+}} \mathbb U_{(f,h)}(V,W)(\mu) \\
& &  + \int_{S^{n-2}_{r'}} V f(\varrho_{g_0}, \vartheta) - \int_{S^{n-2}_{r}} V f(\varrho_{g_0}, \vartheta)\\
& &  + \int_{A_{r,r'}} \mathfrak R_{(g_0,0)}(f,h) + \int_{\Sigma_{r,r'}} \widetilde{\mathfrak R}_{(g_0,0)}(f,h)\\
& &  + \int_{A_{r,r'}} \langle (f,h), \mathscr{F}(V,W) \rangle_{g_0} + \int_{\Sigma_{r,r'}} \langle - d V(\varrho_{g_0}) {\gamma_0} -V\Pi_{g_0}, f\rangle_{\gamma_0}.
\end{eqnarray*}
The last line vanishes due to (\ref{eq:static:gen}) and the fact that $W$ is Killing. Making use of the decay assumptions coming from Definitions \ref{def:as:hyp} and \ref{def:as:hyp2}, we are led to 
\begin{eqnarray}\label{ar_Arr'}
o_{r,r'}(1) & = &	
 \int_{S^{n-1}_{r',+}} \mathbb U_{(f,h)}(V,W)(\mu) - \int_{S^{n-1}_{r,+}} \mathbb U_{(f,h)}(V,W)(\mu) \\
& &  + \int_{S^{n-2}_{r'}} V f(\varrho_{g_0}, \vartheta) - \int_{S^{n-2}_{r}} V f(\varrho_{g_0}, \vartheta),\notag
\end{eqnarray}
where $o_{r,r'}(1)\to 0$ as $r,r' \to +\infty$. The first statement of Proposition \ref{propo:mass:int} follows at once. 

\begin{remark}
It is important to stress that, for \eqref{ar_Arr'} to hold, no boundary condition is imposed on the Killing field $W$. Indeed, as we shall see later, the requirement that $W\in\mathfrak K^+_{g_0}$ appearing in Definition \ref{def:charge} arises as a consequence of the spinor approach, and in particular it is not a necessary condition for the limit defining the energy-momentum vector to exist. 
\end{remark}

For the proof of (\ref{propo:mass:int:1}), we first consider the asymptotically hyperbolic case which is more involved than the asymptotically flat one. The next two results are Lemmas 3.3 and 3.2 in \cite{AdL}. 
\begin{lemma}\label{lem:rigidity}
If $\phi:\mathbb H^n_+\to\mathbb H^n_+$ is a diffeomorphism such that 
$$
\phi^*b=b+O(|y|^{-\kappa}), \:\:\text{as}\:|y|\to\infty,
$$ 
for some $\kappa>0$, then there exists an isometry A of $(\mathbb H^n_+,b)$ which preserves $\partial\mathbb H^n_+$ and satisfies 
$$
\phi=A+O(|y|^{-\kappa}),
$$
with similar estimate holding for the first order derivatives.
\end{lemma}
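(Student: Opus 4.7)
The plan is to reduce the statement to the classical rigidity theorem for asymptotic isometries of the full hyperbolic space $(\mathbb H^n,b)$, using a doubling construction across the totally geodesic boundary $\partial\mathbb H^n_+$. I would carry this out in three steps.

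First, since $b$ makes $\partial\mathbb H^n_+$ a totally geodesic copy of $\mathbb H^{n-1}$, the geodesic reflection $R$ across $\partial\mathbb H^n_+$ is a $b$-isometry of the full hyperbolic space, and the double $(\mathbb H^n_+,b)\cup_R(\mathbb H^n_+,b)$ is canonically isometric to $(\mathbb H^n,b)$. Because $\phi$ is a diffeomorphism of the manifold-with-boundary $\mathbb H^n_+$, it must carry $\partial\mathbb H^n_+$ to itself, so the $R$-equivariant extension $\tilde\phi:\mathbb H^n\to\mathbb H^n$ defined by $\tilde\phi=\phi$ on $\mathbb H^n_+$ and $\tilde\phi=R\circ\phi\circ R$ on the mirror copy is continuous across $\partial\mathbb H^n_+$, smooth off it, and satisfies $\tilde\phi^*b=b+O(|y|^{-\kappa})$ at infinity, by combining the hypothesis on $\phi$ with $R^*b=b$.

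Next, I would invoke the classical asymptotic rigidity theorem for $(\mathbb H^n,b)$ (for instance, the version used in \cite{CH, AD}) to produce a $b$-isometry $\tilde A$ of $\mathbb H^n$ with $\tilde\phi=\tilde A+O(|y|^{-\kappa})$, together with the analogous estimate for first derivatives. Passing to the asymptotic limit in the identity $\tilde\phi\circ R=R\circ\tilde\phi$ shows that $\tilde A$ commutes with $R$ and therefore preserves $\partial\mathbb H^n_+$. The restriction $A:=\tilde A|_{\mathbb H^n_+}$ is then the desired isometry of $(\mathbb H^n_+,b)$ preserving $\partial\mathbb H^n_+$, and restricting the decay estimates for $\tilde\phi$ to $\mathbb H^n_+$ gives the estimate $\phi=A+O(|y|^{-\kappa})$ together with the matching derivative bound.

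The hard part will be the regularity of $\tilde\phi$ across $\partial\mathbb H^n_+$. The reflection doubling of a diffeomorphism that only fixes the seam setwise is generally only Lipschitz there: tangential components extend evenly while the normal component extends oddly, so the Jacobian has a jump along $\partial\mathbb H^n_+$. I would handle this in one of two ways: either (i) verify that the classical rigidity theorem tolerates such a Lipschitz seam along a finite-distance totally geodesic hypersurface, which is plausible because the argument takes place entirely in a neighborhood of infinity $\{|y|>r_0\}$ and uses weighted H\"older norms symmetric under $R$; or (ii) run the proof of the classical rigidity result directly on $\mathbb H^n_+$, imposing on $\partial\mathbb H^n_+$ the natural oblique boundary condition that encodes the invariance of the seam under $\phi$. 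Option (ii) is more robust and aligns with the approach already adopted in the time-symmetric treatment of \cite{AdL}.
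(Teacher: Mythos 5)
The paper itself offers no proof of this lemma: it is quoted as Lemma 3.3 of \cite{AdL}, where it is established by running the Chru\'sciel--Herzlich asymptotic rigidity argument \cite{CH} directly on the half-space --- essentially your ``option (ii)''. Your primary route, doubling across the totally geodesic boundary and invoking the boundaryless rigidity theorem, is a genuinely different reduction, and its formal skeleton is sound: the double of $(\mathbb H^n_+,b)$ along $\partial\mathbb H^n_+$ is $(\mathbb H^n,b)$; the reflected extension $\tilde\phi$ satisfies $\tilde\phi^*b=b+O(|y|^{-\kappa})$ away from the seam; and once a $\tilde A$ is produced, $R\circ\tilde\phi\circ R=\tilde\phi$ forces $\tilde A^{-1}\circ R\circ \tilde A\circ R$ to displace all far points by $o(1)$, hence to be the identity, so $\tilde A$ preserves $\partial\mathbb H^n_+$.

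The gap is exactly the one you flag, and it is not cosmetic. The seam $\partial\mathbb H^n_+\cong\mathbb H^{n-1}$ is an unbounded totally geodesic hypersurface that meets every neighborhood of infinity, so the locus where $\tilde\phi$ fails to be $C^1$ cannot be excised by working in $\{|y|>r_0\}$; and $\tilde\phi$ does genuinely fail to be $C^1$ there in general, since the jump of $d\tilde\phi$ at a boundary point is measured by the normal--tangential block of $d\phi$, which the hypothesis controls only up to $O(|y|^{-\kappa})$. The rigidity theorem of \cite{CH} is stated and proved for diffeomorphisms, so your option (i) --- that its proof tolerates a Lipschitz seam with decaying derivative jump --- is plausible but is precisely the analytic content that must be verified, and the proposal does not verify it. Option (ii) is not a second proof but a deferral: ``run the boundaryless proof on $\mathbb H^n_+$ with a boundary condition'' is, in substance, the proof already given in \cite{AdL}. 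As written, then, the argument reduces the lemma either to an unverified strengthening of the classical theorem or to the very reference the paper cites. To make the doubling route self-contained you would need to show that boundary preservation plus the asymptotic isometry condition give $d\tilde\phi$ a jump small enough to be absorbed by the weighted norms in the proof of \cite{CH}, and then trace through that proof; otherwise the direct half-space argument is unavoidable.
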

\begin{lemma}\label{lem:cancellation}
If $(V,W)\in \mathcal N_b^+\oplus \mathfrak K^+_b$ and $\zeta$ is a vector field on $\mathbb H^n_+$, tangent to $\partial\mathbb H^n_+$, then 
$$
\mathbb U_{(L_{\zeta}b,0)}(V,W)={\rm{div}}_b\mathbb V,
$$
with $\mathbb V_{ik}=V(\zeta_{i;k}-\zeta_{k;i})+2(\zeta_kV_i-\zeta_iV_k)$. 
\end{lemma}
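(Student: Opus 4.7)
The identity is a pointwise interior relation that in fact does not depend on $W$: since $h=0$, the $W$-dependent summand of $\mathbb U_{(f,h)}(V,W)$ in Definition \ref{def:charge} vanishes, leaving a quantity built from $f=L_\zeta b$ and $V$ alone. (The tangency hypothesis on $\zeta$ is not used at the pointwise level; it will matter when the identity is integrated against boundary terms in the proof of \eqref{propo:mass:int:1}.) I will verify it by a direct term-by-term calculation, using two ingredients: the static equation $\nabla^2_b V = Vb$ satisfied by $V\in\mathcal N_b^+$ (equivalent, in view of $\text{Ric}_b=-(n-1)b$ on $\mathbb H^n$, to the first line of \eqref{eq:static:gen}), and the Ricci identity for commuting covariant derivatives.

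\textbf{Execution.} Starting from $(L_\zeta b)_{ij}=\zeta_{i;j}+\zeta_{j;i}$ and $\tr_b L_\zeta b=2\,{\rm div}_b\zeta$, a single application of the Ricci identity (and $\text{Ric}_b=-(n-1)b$) gives
$$
({\rm div}_b L_\zeta b - d\tr_b L_\zeta b)_j = \Delta_b\zeta_j - d({\rm div}_b\zeta)_j - (n-1)\zeta_j.
$$
Substituting into Definition \ref{def:charge} produces an explicit formula for $\mathbb U_{(L_\zeta b,0)}(V,W)_j$ in terms of $V$, $\zeta$ and their first and second derivatives. In parallel, I would compute $\nabla^i\mathbb V_{ij}$ by Leibniz from the candidate $\mathbb V_{ij}=V(\zeta_{i;j}-\zeta_{j;i})+2(\zeta_j V_{;i}-\zeta_i V_{;j})$, using a second Ricci commutation on $\zeta_{i;j}{}^{;i}$ and the static identity $V_{;ij}=Vb_{ij}$ (together with its trace $\Delta_b V=nV$) to eliminate every second derivative of $V$. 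A term-by-term comparison then matches the two expressions, yielding the identity.

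\textbf{Conceptual shortcut.} Existence of \emph{some} primitive can be seen a priori: the AdS cosmological constant $\Lambda_n$ is tuned precisely so that $\Psi(b,0)=0$, hence by diffeomorphism covariance of $\Psi$ one has $D\Psi_{(b,0)}(L_\zeta b,0)=L_\zeta\Psi(b,0)=0$. Combined with the integration-by-parts identity \eqref{inteparts} and the vanishing of $\mathscr F(V,W)$ on the whole of $\mathcal N_b^+\oplus\mathfrak K_b^+$ (the first component vanishes by the static equation plus $\text{Ric}_b=-(n-1)b$, the second because $W$ is Killing), this forces ${\rm div}_b\mathbb U_{(L_\zeta b,0)}(V,W)=0$. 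This guarantees that a primitive $\mathbb V$ exists locally; identifying it as the specific antisymmetric-plus-$\zeta\wedge dV$ combination of the statement still requires the explicit calculation above.

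\textbf{Main obstacle.} The delicate step is the bookkeeping of the two Ricci commutators, one in ${\rm div}_b L_\zeta b$ and the other in $\nabla^i\mathbb V_{ij}$: the resulting curvature contributions come with opposite signs and must cancel exactly against the $V\text{Ric}_b$-type terms introduced through the static equation. The constant sectional curvature of $\mathbb H^n$, together with the precise coefficient $2$ of the ``mixed'' piece $\zeta_j V_{;i}-\zeta_i V_{;j}$ in $\mathbb V$, is what makes every cancellation line up.
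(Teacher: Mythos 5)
Your proposal is correct, and it is worth noting that the paper itself does not prove this lemma: it is quoted verbatim as Lemma 3.2 of \cite{AdL}, where it is established by exactly the direct term-by-term verification you outline (the two Ricci commutations giving $\zeta_{i;k}{}^{;i}=(\operatorname{div}_b\zeta)_{;k}+\mathrm{Ric}_{kl}\zeta^l$, the static identities $V_{;ij}=Vb_{ij}$, $\Delta_b V=nV$, and $\mathrm{Ric}_b=-(n-1)b$). I checked the bookkeeping: both sides reduce to $V\bigl(\Delta_b\zeta_k-(\operatorname{div}_b\zeta)_{;k}-(n-1)\zeta_k\bigr)-V^{;i}(L_\zeta b)_{ik}+2(\operatorname{div}_b\zeta)V_{;k}$ (up to the sign fixed by which index of the antisymmetric $\mathbb V$ is contracted in $\operatorname{div}_b$), and your remarks that the identity is $W$-independent, that tangency of $\zeta$ plays no role pointwise, and that divergence-freeness of $\mathbb U_{(L_\zeta b,0)}(V,W)$ follows a priori from diffeomorphism covariance plus $\Psi(b,0)=0$ are all accurate.
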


We now follow the lines of \cite[Theorem 3.4]{AdL}. Suppose $F_1$ and $F_2$ are asymptotic coordinates for $(M,g,h,\Sigma)$ as in Definition \ref{def:as:hyp2} and set $\phi=F_1^{-1}\circ F_2$. It follows from Lemma \ref{lem:rigidity} that 
$\phi=A + O(|y|^{-\kappa})$, for some isometry $A$ of $(\mathbb H^n_+, b)$. By composing with $A^{-1}$, one can assume that $A$ is the identity map of $\mathbb H^n_+$. In particular, $\phi=\text{exp}\circ \zeta$ for some vector field $\zeta$ tangent to $\partial\mathbb H^n_+$. Set
\begin{equation*}
\begin{array}{cccc}
f_1=F_1^*g-b, & h_1=F_1^*h, & f_2=F_2^*g-b, & h_2=F_2^*h.
\end{array}
\end{equation*}
Then
$$
f_2-f_1=\phi^*F_1^*g-F_1^*g=\phi^*(b+f_1)-(b+f_1)=L_{\zeta}b+O(|y|^{-2\kappa}).
$$
Similarly, $h_2-h_1=O(|y|^{-2\kappa-1})$. This implies
$$
\mathbb U_{(f_2,h_2)}(V,W)-\mathbb U_{(f_1,h_1)}(V,W)
=\mathbb U_{(L_{\zeta}b,0)}(V,W)+O(|y|^{-2\kappa+1}).
$$
By Lemma \ref{lem:cancellation} and Stokes' theorem,
\begin{align}\label{eq:LXb}
\lim_{r\to\infty}\int_{S^{n-1}_{r,+}}\mathbb U_{(f_2,h_2)}(V,W)(\mu)
&-\lim_{r\to\infty}\int_{S^{n-1}_{r,+}}\mathbb U_{(f_1,h_1)}(V,W)(\mu)
\\
&=\lim_{r\to\infty}\int_{S^{n-1}_{r,+}}\mathbb U_{(L_{\zeta}b,0)}(V,W)(\mu)\notag
\\
&=\lim_{r\to\infty}\int_{\Sigma_r}\mathbb U_{(L_{\zeta}b,0)}(V,W)(\varrho_b)\notag
\end{align}
where $\Sigma_r=\{y\in \Sigma\:;\:|y|\leq r\}$. Observe that $\varrho_b \righthalfcup \mathbb V$ is tangent to the boundary and set $\beta=b|_{\partial\mathbb H^n_+}$. Direct computations yield
$$
\mathbb U_{(L_{\zeta}b,0)}(V,W)(\varrho_b)=\text{div}_{\beta}(\varrho_b \righthalfcup \mathbb V),
$$ 
so another integration by parts shows that the right-hand side of (\ref{eq:LXb}) is
$$
\lim_{r\to\infty}\int_{S^{n-2}_r}\mathbb V(\varrho_b,\vartheta)
=\lim_{r\to\infty}\int_{S^{n-2}_r}V(-\zeta_{\alpha;n})\vartheta_{\alpha}
=\lim_{r\to\infty}\int_{S^{n-2}_r}V(f_1-f_2)(\varrho_b,\vartheta).
$$
This proves (\ref{propo:mass:int:1}) for the asymptotically hyperbolic case. The asymptotically flat one is simpler: Lemma \ref{lem:cancellation} is not necessary because $\mathcal N^+_{\delta}\cong \mathbb R$, and  Lemma \ref{lem:rigidity} has a similar version in \cite[Proposition 3.9]{ABdL}.


\section{Spinors and the Dirac-Witten operator}\label{spinors}

Here we describe the so-called Dirac-Witten operator, which will play a central role in the proofs of Theorems \ref{main} and \ref{maintheohyp}. 
Although most of the material presented in this section is already available in the existing literature (see for instance \cite{CHZ,D,XD} and the references therein) we insist on providing a somewhat detailed account
as this will help us
to carefully keep track of the boundary terms, which are key for this paper.

We start with a few preliminary algebraic results. 
As before, let  $(\mathbb L^{n,1},\overline\delta)$ be the Minkowski space endowed with the standard orthonormal frame $\{\partial_{x_\alpha}\}_{\alpha=0}^n$. 
Let ${\rm Cl}_{n,1}$ be the Clifford algebra of the pair  $(\mathbb L^{n,1},\overline\delta)$ and ${\mathbb C}{\rm l}_{n,1}={\rm Cl}_{n,1}\otimes\mathbb C$ its complexification.  Thus, ${\mathbb C}{\rm l}_{n,1}$ is the unital complex algebra generated over $\mathbb L^{n,1}$ under the {\em Clifford relations}:
\begin{equation}\label{cliff}
X\cdot X'\cdot+X'\cdot X\cdot=-2\langle X,X'\rangle_{\overline\delta}, \quad X,X'\in\mathbb L^{n,1}, 
\end{equation}
where the 
dot represents Clifford multiplication.
Since ${\mathbb C}{\rm l}_{n,1}$ can be explicitly described in terms of matrix algebras, its representation theory is quite easy to understand. In fact, if $n+1$ is even then  ${\mathbb C}{\rm l}_{n,1}$ carries a unique irreducible representation whereas if $n+1$ is odd then it carries precisely two inequivalent irreducible representations.  

Let ${\rm SO}^0_{n,1}$ be the identity component of the subgroup of isometries of $(\mathbb L^{n,1},\overline\delta)$ fixing the origin. Passing to its simply connected double cover we obtain a Lie group homomorphism
\[
\overline \chi:{\rm Spin}_{n,1}\to {\rm SO}^0_{n,1}
\]
The choice of the time-like unit vector $\partial_{x_0}$ gives the identification 
\[
\mathbb R^n=\{X\in\mathbb L^{n,1};\langle X,\partial_{x_0}\rangle=0\}
\] 
so we obtain a  Lie group homomorphism
\[
\chi:=\overline\chi|_{{\rm Spin}_n}:{\rm Spin}_{n}\to {\rm SO}_{n}.
\]  
Hence, ${\rm Spin}_n$ is the universal double cover of ${\rm SO}_n$, the rotation group in dimension $n$.
Summarizing, we have the diagram
\[
\begin{array}{ccc}
{\rm Spin}_{n} & \stackrel{\overline\gamma}{\longrightarrow} & {\rm Spin}_{n,1}\\
\downarrow\chi & & \downarrow\overline\chi\\
{\rm SO}_n & \stackrel{\gamma}\longrightarrow & {\rm SO}_{n,1}^0
\end{array}
\]
where the horizontal arrows are inclusions and the vertical arrows are two-fold covering maps. 

We now recall that ${\rm Spin}_{n,1}$ can be realized as a multiplicative subgroup of ${\rm Cl}_{n,1}\subset{\mathbb C}{\rm l}_{n,1}$. Thus, by restricting any of the irreducible representations of ${\mathbb C}{\rm l}_{n,1}$ described above we obtain the so-called {\rm spin representation}
$\overline\sigma:{\rm Spin}_{n,1}\times S\to S$.
It turns out that the vector space $S$ comes with a natural positive definite, hermitian inner product $\langle\,,\rangle$ satisfying 
\[
\langle X\cdot\psi,\phi\rangle=\langle\psi,\theta(X)\cdot\phi\rangle, 
\]
where 
\[
\theta(a_0\partial_{x_0}+a_i\partial_{x_i})=a_0\partial_{x_0}-a_i\partial_{x_i}. 
\]
In particular, $\langle\,,\rangle$ is ${\rm Spin}_n$ but not ${\rm Spin}_{n,1}$-invariant; see \cite{D}. 
A way to partly remedy this is to consider another hermitean inner product on $S$ given by
\[
(\psi,\phi)=\langle\partial_{x_0}\cdot\psi,\phi\rangle,
\] 
which is clearly ${\rm Spin}_{n,1}$-invariant. But notice that $(\,,)$ is {\em not} positive definite. 
We remark that  $\partial_{x_0}\cdot$ is hermitean with respect to  $\langle\,,\rangle$ whereas $\partial_{x_i}\cdot$  is  skew-hermitean with respect to  $\langle\,,\rangle$. On the other hand, any $\partial_{x_\alpha}\cdot$ is hermitean with respect to $(\,,)$.

We now work towards globalizing the algebraic picture above. Consider a space-like embedding 
\[
i:(M^n,g)\hookrightarrow ({\overline M}^{n+1},\overline g)
\]
endowed with a time-like unit normal vector $e_0$. Here, $(\overline M,\overline g)$ is a Lorentzian manifold. Let $P_{\rm SO}(T\overline M)$ (respectively, $P_{\rm SO}(TM)$) be the principal ${\rm SO}^0_{n,1}-$ (respectively, ${\rm SO}_n-$) frame bundle of $TN$ (respectively, $TM$). Also, set
\[
\widetilde{P_{\rm SO}(T\overline M)}:=i^*P_{\rm SO}(T\overline M)
\]
to be the restricted principal ${\rm SO}_{n,1}^0$-frame bundle. In order to lift $\widetilde{P_{\rm SO}(TN)}$ to a principal ${\rm Spin}_{n,1}$-bundle we note that the choice of $e_0$ provides the identification
\[
\widetilde{P_{\rm SO}(T\overline M)}=P_{\rm SO}(TM)\times_{\gamma}{\rm SO}_{n,1}^0. 
\]
Now, as $M$ is supposed to be spin, there exists a twofold lift 
\[
P_{\rm Spin}(TM)\longrightarrow P_{\rm SO}(TM),
\] 
so $P_{\rm Spin}(TM)$ is the principal spin bundle of $TM$. We then set 
\[
\widetilde{P_{\rm Spin}(T\overline M)}:=P_{\rm Spin}(TM)\times_{\overline \gamma} {\rm Spin}_{n,1}, 
\]
which happens to be the desired lift of $\widetilde{P_{\rm SO}(T\overline M)}$. The corresponding {\em restricted} spin bundle is defined by means of the standard associated bundle construction, namely, 
\[
\mathbb S_{M}:=\widetilde{P_{\rm Spin}(T\overline M)}\times_{\overline \sigma} S.
\]
This comes endowed with the hermitean metric $(\,,)$ and a compatible connection $\overline \nabla$ (which is induced by the  extrinsic Levi-Civita connection $\overline \nabla$ of $(\overline M,\overline g)$).  Finally, we also have 
\[
\mathbb S_{M}=P_{\rm Spin}(TM)\times_{\sigma}S,
\] 
where $\sigma=\overline\sigma|_{{\rm Spin}_n}$. Hence, $\mathbb S_M$ is also endowed with the metric $\langle\,,\rangle$ and a compatible connection $\nabla$ (which is induced by the intrinsic Levi-Civita connection $\nabla$ of $(M,g)$) satisfying
\begin{equation}\label{eq_nablaintrinsic}
{\nabla_{e_i} = e_i + \frac{1}{4} \Gamma_{ij}^k e_j\cdot e_k\cdot} 
\end{equation}
In particular, $\overline\nabla$ is compatible with $(\,,)$ but {\em not} with $\langle\,,\rangle$.
We finally remark that in terms of an adapted frame  $\{e_\alpha\}$ there holds
\begin{equation}\label{spingauss}
\overline\nabla_{e_i}=\nabla_{e_i}-\frac{1}{2}h_{ij}e_0\cdot e_j\cdot,
\end{equation}
where $h_{ij}=g(\overline\nabla_{i}e_0,e_j)$ are the components of the second fundamental form. This is the so-called {\em spinorial Gauss formula}.

We are now ready to introduce the main character of our story. 

\begin{definition}
	The Dirac-Witten operator $\overline{\mathcal D}$ is defined by the composition
	\[
	\Gamma(\mathbb S_M)\stackrel{\overline\nabla}{\longrightarrow}\Gamma(TM\otimes \mathbb S_M)\stackrel{\cdot}{\longrightarrow}\Gamma(\mathbb S_M)\,.
	\]
	Locally, 
	\[
	\overline{\mathcal D}=e_i\cdot\overline\nabla_{e_i}.
	\]
\end{definition}

The key point here is that $\overline{\mathcal D}$ has the same symbol as the intrinsic Dirac operator ${\mathcal D} =e_i\cdot\nabla_{e_i}$ 
but in its definition $\overline\nabla$ is used instead of $\nabla$. 

Usually we view $\overline{\mathcal D}$ as acting on spinors satisfying a suitable boundary condition along $\Sigma$.

In what follows we discuss the one to be used for the asymptotically flat case. The one for the asymptotically hyperbolic will be discussed in Section \ref{secthyp}.

\begin{definition}\label{defmit}
	Let $\omega=i\varrho\,\cdot$ be the (pointwise) hermitean involution acting on $\Gamma({\mathbb S_M}|_{\Sigma})$. We say that a spinor $\psi\in \Gamma(\mathbb S_M)$ satisfies the \it{MIT bag}  boundary condition if any of the identities 
	\begin{equation}\label{mitcond}
	\omega\psi=\pm\psi
	\end{equation}
	holds
	along $\Sigma$. 
	\end{definition}

\begin{remark}\label{rem_MIT}
\emph{
	Note that a spinor $\psi$ satisfying a MIT bag boundary condition also enjoys $( \varrho \cdot \psi, \psi) = 0$ on $\Sigma$. Indeed, the fact that $\varrho\,\cdot$ is Hermitian for $( \, , \,)$ implies
\begin{equation*}
(\varrho \cdot \psi, \psi) = \big(\varrho \cdot (\pm i \varrho\cdot\psi), \pm i \varrho\cdot\psi \big) = - ( \psi, \varrho \cdot \psi) = - (\varrho \cdot \psi, \psi).
\end{equation*}	
}
\end{remark}

\begin{proposition}\label{sefaddir}
	Let $\overline{\mathcal D}_\pm$ be the Dirac-Witten operator acting on spinors satisfying the MIT bag boundary condition (\ref{mitcond}). Then $\overline{\mathcal D}_+$ and $\overline{\mathcal D}_-$  are adjoints to each other with respect to $\langle\,,\rangle$.
	\end{proposition}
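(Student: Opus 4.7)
My plan is to reduce $\overline{\mathcal{D}}$ to the intrinsic Riemannian Dirac operator $\mathcal{D} = e_i\cdot\nabla_{e_i}$ plus a zero-order correction that is pointwise self-adjoint with respect to $\langle\,,\rangle$, then to invoke the standard Green identity for $\mathcal D$ on a manifold with boundary, and finally to use the MIT bag conditions on $\psi$ and $\phi$ to annihilate the remaining boundary integrand. Throughout I may assume $\psi$ and $\phi$ are compactly supported away from the asymptotic end, so no contribution appears from infinity.

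First I would read off from the spinorial Gauss formula (\ref{spingauss}) that
\[
\overline{\mathcal D} \;=\; \mathcal D \;-\; \tfrac{1}{2}h_{ij}\, e_i\cdot e_0\cdot e_j\cdot .
\]
Commuting $e_0\cdot$ to the left using $e_i\cdot e_0\cdot = -e_0\cdot e_i\cdot$ and exploiting the symmetry $h_{ij}=h_{ji}$ together with the Clifford identity $h_{ij}\,e_i\cdot e_j\cdot = -\mathrm{tr}_g h$, this reduces to
\[
\overline{\mathcal D} \;=\; \mathcal D \;-\; \tfrac{1}{2}(\mathrm{tr}_g h)\, e_0\cdot .
\]
Since $e_0\cdot$ is hermitian with respect to $\langle\,,\rangle$ (as recalled just after (\ref{cliff})) and $\mathrm{tr}_g h$ is real, the zero-order correction is pointwise self-adjoint for $\langle\,,\rangle$, and so $\langle\overline{\mathcal D}\psi,\phi\rangle - \langle\psi,\overline{\mathcal D}\phi\rangle = \langle\mathcal D\psi,\phi\rangle - \langle\psi,\mathcal D\phi\rangle$.

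Next I would establish the Green identity for $\mathcal D$ by integrating, in a local orthonormal frame normal at the point, the divergence of the vector field $X$ defined via $g(X,Y) = \langle Y\cdot\psi,\phi\rangle$. Using the compatibility of $\nabla$ with $\langle\,,\rangle$ and the skew-hermiticity of $e_i\cdot$ for $i\geq 1$, one finds $\mathrm{div}_g X = \langle\mathcal D\psi,\phi\rangle - \langle\psi,\mathcal D\phi\rangle$, and applying the divergence theorem with $\varrho$ being the \emph{inward} unit normal to $\Sigma$ gives
\[
\int_M \bigl(\langle\mathcal D\psi,\phi\rangle - \langle\psi,\mathcal D\phi\rangle\bigr)\,dM \;=\; -\int_\Sigma \langle\varrho\cdot\psi,\phi\rangle\, d\Sigma .
\]

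Finally, I would show the boundary integrand vanishes when $\omega\psi=+\psi$ and $\omega\phi=-\phi$. Since $\omega=i\varrho\cdot$ is hermitian with respect to $\langle\,,\rangle$,
\[
\langle\psi,\phi\rangle \;=\; \langle\omega\psi,\phi\rangle \;=\; \langle\psi,\omega\phi\rangle \;=\; -\langle\psi,\phi\rangle ,
\]
forcing $\langle\psi,\phi\rangle\equiv 0$ on $\Sigma$, and hence $\langle\varrho\cdot\psi,\phi\rangle = -i\langle\omega\psi,\phi\rangle = -i\langle\psi,\phi\rangle = 0$ pointwise along $\Sigma$. Combining these three steps yields $\langle \overline{\mathcal D}_+\psi,\phi\rangle_{L^2(M)} = \langle\psi,\overline{\mathcal D}_-\phi\rangle_{L^2(M)}$, which is the claimed formal adjointness. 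The only delicate point is bookkeeping the signs in both the Clifford manipulation of $h_{ij}e_i\cdot e_0\cdot e_j\cdot$ and in the Green identity (orientation of $\varrho$); with those in place the argument is routine.
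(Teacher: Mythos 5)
Your proposal is correct, and it reaches the same skeleton as the paper (an interior Green-type identity whose boundary term $\langle\varrho\cdot\psi,\phi\rangle$ is killed pointwise by the two MIT bag conditions), but it organizes the interior computation differently. The paper works entirely extrinsically: it differentiates the form $\langle e_i\cdot\phi,\xi\rangle\, e_i\righthalfcup dM$ using the compatibility of $\overline\nabla$ with the indefinite product $(\,,)$, and the two second-fundamental-form terms $h_{ii}(\phi,\xi)$ and $h_{ij}(e_j\cdot e_i\cdot\phi,\xi)$ cancel via the Clifford relations. You instead isolate the difference $\overline{\mathcal D}-\mathcal D=-\tfrac12({\rm tr}_g h)\,e_0\cdot$ (your Clifford algebra here is right: $h_{ij}e_i\cdot e_j\cdot=-{\rm tr}_g h$ by symmetry of $h$), observe that this zero-order correction is pointwise self-adjoint for $\langle\,,\rangle$ since $e_0\cdot$ is hermitian, and then invoke the standard Green identity for the intrinsic $\mathcal D$ with the positive-definite product. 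The cancellation the paper obtains from the Clifford relations is exactly your statement that the correction is self-adjoint, so the two arguments are equivalent in substance; yours has the advantage of never mixing $(\,,)$ and $\langle\,,\rangle$ and of quoting a standard Riemannian fact, while the paper's version sets up the machinery it reuses verbatim in Proposition \ref{moredir}. Your boundary argument ($\omega$ hermitian, opposite eigenvalues force $\langle\psi,\phi\rangle=0$ on $\Sigma$) is the same as the paper's up to rearrangement, and your remark about the sign of the boundary integral relative to the inward normal $\varrho$ is immaterial since that integrand vanishes identically.
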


\begin{proof}
	 We use a local frame such that $\nabla_{e_i}e_j|_p=0$ for a given $p\in M$. It is easy to check that at this point we also have $\overline\nabla_{e_i}e_j= {h_{ij}e_0}$ and $\overline\nabla_{e_i}e_0= {h_{ij}e_j}$. 
	Now, given spinors $\phi,\xi\in\Gamma(\mathbb S_M)$, consider the $(n-1)$-form
	\[
	\widehat \theta=\langle e_i\cdot\phi,\xi\rangle e_i\righthalfcup dM.  
	\]
	Thus,
	\[
	d\widehat\theta=e_i\langle e_i\cdot\phi,\xi\rangle dM=e_i(e_0\cdot e_i\cdot\phi,\xi) dM=
	e_i(e_i\cdot\phi,e_0\cdot\xi)dM. 
	\]
	Using that $\overline\nabla$ is compatible with $(\,,)$, we have
	\begin{eqnarray*}
		d\widehat\theta & = & \big((\overline\nabla_{e_i}e_i\cdot\phi,e_0\cdot\xi) + (e_i\cdot\overline\nabla_{e_i}\phi,e_0\cdot\xi)\\
		& & \quad +(e_i\cdot\phi,\overline\nabla_{e_i}e_0\cdot\xi)+(e_i\cdot\phi,e_0\cdot\overline\nabla_{e_i}\xi) \big) dM\\
		& = & \big( h_{ii}(e_0\cdot\phi,e_0\cdot\xi)+(\overline{\mathcal D}\phi,e_0\cdot\xi)\\
		&  & \quad + h_{ij}(e_i\cdot\phi,e_j\cdot\xi)-(e_0\cdot\phi,e_i\cdot\overline\nabla_{e_i}\xi)\big)dM\\
		& = & (h_{ii}(\phi,\xi)+ (e_0\cdot\overline{\mathcal D}\phi,\xi)\\
		& & \quad + h_{ij}(e_j\cdot e_i\cdot\phi,\xi)-(e_0\cdot\phi,\overline{\mathcal D}\xi))dM.
	\end{eqnarray*}
	Now, the first and third terms cancel out due to the Clifford relations (\ref{cliff}) so we end up with
	\begin{equation}\label{selfpoint}
	d\widehat\theta=(\langle\overline{\mathcal D}\phi,\xi\rangle-\langle \phi,\overline{\mathcal D}\xi\rangle)dM. 
	\end{equation}
	Hence, assuming that $\phi$ and $\psi$ are compactly supported we get
	\begin{eqnarray*}
	\int_M\langle\overline{\mathcal D}\phi,\xi\rangle dM-\int_M \langle \phi,\overline{\mathcal D}\xi\rangle dM & = & \int_\Sigma \langle e_i\cdot\phi,\xi\rangle e_i\righthalfcup dM\\
	& = & \int_\Sigma \langle \varrho\cdot\phi,\xi\rangle d\Sigma.
	\end{eqnarray*}
where we used an adapted frame such that $e_n=\varrho$. 
If $\omega\phi=\phi$ and $\omega\xi=-\xi$ we have
\begin{eqnarray*}
	\langle  \varrho\cdot\phi,\xi\rangle & = & \langle \varrho\cdot(i \varrho\cdot\phi),- i \varrho\cdot\xi\rangle\\
	& = & \langle \phi,\varrho\cdot \xi\rangle\\
	& = & -\langle \varrho\cdot \phi,\xi\rangle,
\end{eqnarray*}
that is, $\langle  \varrho\cdot\phi,\xi\rangle=0$.
\end{proof}

\begin{proposition}\label{moredir}
	Given a spinor $\psi \in \Gamma(\mathbb{S}_M)$, define the $(n-1)$-forms
	\[
	\theta=\langle e_i\cdot\overline{\mathcal D}\psi,\psi\rangle e_i\righthalfcup dM, \quad \eta=\langle\overline\nabla_{e_i}\psi,\psi\rangle  e_i\righthalfcup dM, 
	\]
	then
	\begin{equation}\label{1}
	d\theta=\left(\langle\overline{\mathcal D}^2\psi,\psi\rangle-|\overline{\mathcal D}\psi|^2\right)dM,
	\end{equation}
	and
	\begin{equation}\label{2}
	d\eta=\left(-\langle \overline{\nabla}^*\overline{\nabla}\psi,\psi\rangle+|\overline{\nabla}\psi|^2\right)dM, 
	\end{equation}
	where $\overline\nabla^*\overline\nabla=\overline{\nabla}^*_{e_i}\overline\nabla_{e_i}$ is the Bochner Laplacian acting on spinors. Here,
	\[
	\overline\nabla_{e_i}^*=-\overline\nabla_{e_i} {+} h_{ij}e_j\cdot e_0\cdot
	\]
	is the formal adjoint of $\overline{\nabla}_{e_i}$ with respect to $\langle\,,\rangle$.
\end{proposition}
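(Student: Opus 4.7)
My plan is to obtain both identities pointwise, so that no boundary contribution appears at this stage: these are really local divergence-type formulas that will later be integrated in combination with the Stokes arguments.

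Identity \eqref{1} is essentially a one-line consequence of the pointwise formula \eqref{selfpoint} already established inside the proof of Proposition \ref{sefaddir}. Specializing that identity with $\phi=\overline{\mathcal D}\psi$ and $\xi=\psi$, the $(n-1)$-form $\widehat\theta$ becomes exactly $\theta$, while the right-hand side becomes $\bigl(\langle \overline{\mathcal D}^2\psi,\psi\rangle-\langle \overline{\mathcal D}\psi,\overline{\mathcal D}\psi\rangle\bigr)dM$, which is \eqref{1}. No further work is needed.

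For \eqref{2} I would run a direct computation at an arbitrary point $p\in M$ using a local orthonormal frame $\{e_i\}$ adapted in the same way as in the proof of Proposition \ref{sefaddir}, i.e.\ $\nabla_{e_i}e_j|_p=0$, which forces $\overline\nabla_{e_i}e_j|_p=h_{ij}e_0$ and $\overline\nabla_{e_i}e_0|_p=h_{ij}e_j$. At $p$,
\[
d\eta = e_j\langle \overline\nabla_{e_j}\psi,\psi\rangle\, dM,
\]
and since the Hermitian product $\langle\,,\rangle$ is compatible with the \emph{intrinsic} connection $\nabla$ (it is ${\rm Spin}_n$-invariant, not ${\rm Spin}_{n,1}$-invariant), this equals
\[
\bigl(\langle \nabla_{e_j}\overline\nabla_{e_j}\psi,\psi\rangle+\langle \overline\nabla_{e_j}\psi,\nabla_{e_j}\psi\rangle\bigr)dM.
\]
I would then substitute the spinorial Gauss formula \eqref{spingauss}, $\nabla_{e_j}=\overline\nabla_{e_j}+\tfrac{1}{2}h_{jk}e_0\cdot e_k\cdot$, into both $\nabla_{e_j}\overline\nabla_{e_j}\psi$ and $\nabla_{e_j}\psi$, and regroup. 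Exploiting that $e_0\cdot e_k\cdot$ is $\langle\,,\rangle$-Hermitian (because $e_0\cdot$ is Hermitian and $e_k\cdot$ is skew-Hermitian, while the two Clifford generators anticommute via \eqref{cliff}), the two off-diagonal Hessian pieces combine to produce exactly $h_{jk}\langle e_k\cdot e_0\cdot\overline\nabla_{e_j}\psi,\psi\rangle$, which is precisely the extra term in the stated formula $\overline\nabla^{*}_{e_j}=-\overline\nabla_{e_j}+h_{jk}e_k\cdot e_0\cdot$ applied to $\overline\nabla_{e_j}\psi$. This yields \eqref{2} and, as a byproduct, confirms the claimed expression for $\overline\nabla^{*}_{e_i}$.

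The only genuine subtlety, as already flagged in the discussion preceding the statement, is the mismatch between $\langle\,,\rangle$ and $\overline\nabla$: the extrinsic connection is compatible with $(\,,)=\langle e_0\cdot\,,\,\rangle$ but not with $\langle\,,\rangle$, and each time one commutes $\overline\nabla$ through $\langle\,,\rangle$ a second-fundamental-form correction of the form $\tfrac{1}{2}h_{jk}e_0\cdot e_k\cdot$ is produced. Keeping bookkeeping of these corrections, via the anticommutativity and (skew-)Hermiticity properties of Clifford multiplication recalled at the beginning of Section \ref{spinors}, is the main thing to be careful about; beyond that, the derivation of \eqref{2} is a routine Bochner-type expansion.
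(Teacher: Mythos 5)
Your proof of \eqref{1} is exactly the paper's: specialize the pointwise identity \eqref{selfpoint} to $\phi=\overline{\mathcal D}\psi$ and $\xi=\psi$, so nothing more is needed there. For \eqref{2} the paper runs the same normal-frame divergence computation but with the other compatible pair: it writes $e_i\langle\overline\nabla_{e_i}\psi,\psi\rangle=e_i(\overline\nabla_{e_i}\psi,e_0\cdot\psi)$ and uses that $\overline\nabla$ is metric for $(\,,)$, the second-fundamental-form correction entering through $\overline\nabla_{e_i}e_0=h_{ij}e_j$; you instead keep $\langle\,,\rangle$, use that the intrinsic $\nabla$ is metric for it, and feed in the Gauss formula \eqref{spingauss}. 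The two bookkeeping schemes are equivalent and both isolate the claimed adjoint $\overline\nabla^*_{e_i}=-\overline\nabla_{e_i}+h_{ij}e_j\cdot e_0\cdot$. One small imprecision in your sketch: the sum of your two off-diagonal pieces is $h_{jk}\langle e_0\cdot e_k\cdot\overline\nabla_{e_j}\psi,\psi\rangle=-h_{jk}\langle e_k\cdot e_0\cdot\overline\nabla_{e_j}\psi,\psi\rangle$, i.e.\ the negative of the expression you wrote; since this term must appear inside $-\langle\overline\nabla^*\overline\nabla\psi,\psi\rangle$, the overall minus restores the correct sign and the final identity \eqref{2} is unaffected.
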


\begin{proof}		
	 By setting $\phi= \overline{\mathcal D}\psi$ and $\xi=\psi$ in (\ref{selfpoint}), (\ref{1}) follows. To prove (\ref{2}) we note that 
	{
	\[
	d\eta=e_i\langle \overline\nabla_{e_i}\psi,\psi\rangle dM=e_i(e_0\cdot\overline\nabla_{e_i}\psi,\psi)dM=e_i(\overline\nabla_{e_i}\psi,e_0\cdot\psi)dM,
	\]  
	so that 
	\begin{eqnarray*}
		d\eta & = & ((\overline{\nabla}_{e_i}\overline\nabla_{e_i}\psi,e_0\cdot\psi)+(\overline\nabla_{e_i}\psi,\overline\nabla_{e_i}e_0\cdot\psi)+(\overline\nabla_{e_i}\psi,e_0\cdot\overline\nabla_{e_i}\psi))dM\\ 
		& = & ((e_0\cdot\overline\nabla_{e_i}\overline{\nabla}_{e_i}\psi,\psi) + h_{ij}(\overline\nabla_{e_i}\psi,e_j\cdot \psi)+(e_0\cdot\overline\nabla_{e_i}\psi,\overline\nabla_{e_i}\psi))dM.
	\end{eqnarray*}
	The term in the middle equals
	\[
	h_{ij}(e_j\cdot\overline\nabla_{e_i}\psi,\psi)= h_{ij}\langle e_0\cdot e_j\cdot\overline\nabla_{e_i}\psi,\psi\rangle= - h_{ij}\langle e_j\cdot e_0\cdot\overline\nabla_{e_i}\psi,\psi\rangle,
	\]
	so we end up with
	\[
	d\eta  =  (\langle-(\underbrace{-\overline\nabla_{e_i} + h_{ij}e_j\cdot e_0\cdot)}_{\overline\nabla^*_{e_i}}\overline\nabla_{e_i}\psi,\psi\rangle+|\overline\nabla\psi|^2)d{\rm vol},
	\]
}
	which completes the proof of (\ref{2}).
\end{proof}

Another key ingredient is the following Weitzenb\"ock-Lichnerowicz formula.

\begin{proposition}
	One has 
	\begin{equation}\label{wetz}
	\overline{\mathcal D}^2=\overline\nabla^*\overline\nabla+\mathcal R,
	\end{equation}
	where the symmetric endomorphism $\mathcal R$ is given by
	\[
	\mathcal R=\frac{1}{4}(R_{\overline g}+2{{\rm Ric}_{\overline g}}_{0\alpha}e_\alpha\cdot e_0 \cdot).
	\]
\end{proposition}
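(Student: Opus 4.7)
The plan is to carry out the standard Bochner--Weitzenb\"ock computation, adapted to the Dirac--Witten setting. I would work pointwise at $p\in M$ in a local $g$-orthonormal frame $\{e_i\}$ chosen to be $\nabla$-normal at $p$, so that $\nabla_{e_i}e_j|_p=0$ and hence, from the relation between $\overline\nabla$ and $\nabla$ in an adapted frame, $\overline\nabla_{e_i}e_j|_p=h_{ij}e_0$ and $\overline\nabla_{e_i}e_0|_p=h_{ij}e_j$. Expanding $\overline{\mathcal D}^2\psi=e_i\cdot\overline\nabla_{e_i}(e_j\cdot\overline\nabla_{e_j}\psi)$ via the Leibniz rule and splitting the second-order piece into parts symmetric and antisymmetric in $i\leftrightarrow j$ by means of the Clifford identity (\ref{cliff}) in the form $e_i\cdot e_j+e_j\cdot e_i=-2\delta_{ij}$, one obtains at $p$
\[
\overline{\mathcal D}^2\psi=-\sum_i\overline\nabla_{e_i}\overline\nabla_{e_i}\psi+\tfrac{1}{2}\sum_{i,j}e_i\cdot e_j\cdot[\overline\nabla_{e_i},\overline\nabla_{e_j}]\psi+\sum_{i,j}h_{ij}\,e_i\cdot e_0\cdot\overline\nabla_{e_j}\psi.
\]

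The second step is to recognise $\overline\nabla^*\overline\nabla$ on the nose. Using the identity $\overline\nabla^*_{e_i}=-\overline\nabla_{e_i}+h_{ij}e_j\cdot e_0\cdot$ from Proposition~\ref{moredir}, we have $\overline\nabla^*\overline\nabla\psi=-\sum_i\overline\nabla_{e_i}\overline\nabla_{e_i}\psi+\sum_{i,j}h_{ij}e_j\cdot e_0\cdot\overline\nabla_{e_i}\psi$ at $p$. A brief Clifford manipulation, using $e_i\cdot e_0=-e_0\cdot e_i$ together with the symmetry $h_{ij}=h_{ji}$, shows that $\sum_{i,j}h_{ij}\,e_i\cdot e_0\cdot\overline\nabla_{e_j}\psi=\sum_{i,j}h_{ij}\,e_j\cdot e_0\cdot\overline\nabla_{e_i}\psi$, so the extrinsic corrections cancel between $\overline{\mathcal D}^2$ and $\overline\nabla^*\overline\nabla$ and we are left with
\[
\overline{\mathcal D}^2\psi-\overline\nabla^*\overline\nabla\psi=\tfrac{1}{2}\sum_{i,j}e_i\cdot e_j\cdot[\overline\nabla_{e_i},\overline\nabla_{e_j}]\psi.
\]

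The third step is the curvature identification. Since $\overline\nabla$ on $\mathbb S_M$ is the spin lift of the spacetime Levi-Civita connection, at $p$ (where $[e_i,e_j]|_p=0$) the standard spinorial curvature formula gives $[\overline\nabla_{e_i},\overline\nabla_{e_j}]\psi=\tfrac{1}{4}\overline R_{ij\alpha\beta}\,e_\alpha\cdot e_\beta\cdot\psi$, so
\[
\mathcal R\psi=\tfrac{1}{8}\sum_{i,j,\alpha,\beta}\overline R_{ij\alpha\beta}\,e_i\cdot e_j\cdot e_\alpha\cdot e_\beta\cdot\psi.
\]
I would decompose the sum over $(\alpha,\beta)$ into three blocks: both spatial, exactly one equal to $0$, or both equal to $0$ (which vanishes by antisymmetry of $\overline R$). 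The all-spatial block, after a first-Bianchi cycle of the last three indices and repeated use of Clifford anticommutation to collapse the $e_ie_je_ke_l$ products onto scalars, reduces to a multiple of $\sum_{i,j}\overline R_{ijij}$. The mixed block, using $\overline R_{ij\alpha0}=-\overline R_{ij0\alpha}$ and $e_k\cdot e_0=-e_0\cdot e_k$, assembles into the bivector $\overline{\mathrm{Ric}}_{0\alpha}\,e_\alpha\cdot e_0\cdot$. Finally the decomposition $R_{\overline g}=\sum_{i,j}\overline R_{ijij}+2\overline{\mathrm{Ric}}_{00}$ reincorporates the missing timelike contribution, and the pieces combine to give $\mathcal R=\tfrac{1}{4}\bigl(R_{\overline g}+2\overline{\mathrm{Ric}}_{0\alpha}\,e_\alpha\cdot e_0\cdot\bigr)$, as claimed.

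The main obstacle I expect is precisely this last algebraic reduction: the combinatorics of the quartic Clifford product $e_i\cdot e_j\cdot e_\alpha\cdot e_\beta$ weighted by $\overline R_{ij\alpha\beta}$ must be handled carefully, because the outer sum runs only over spatial indices while the inner one ranges over all spacetime indices, and the Lorentzian sign conventions together with the ``missing'' timelike pieces of the scalar curvature have to be recovered through the first Bianchi identity. Conceptually this is routine, but it is error-prone in practice, so I would write the final algebra out in detail rather than leave it to the reader.
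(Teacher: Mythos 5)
Your argument is correct and is precisely the standard Lichnerowicz--Weitzenb\"ock computation for the Dirac--Witten operator; the paper offers no proof of its own here (it simply cites Parker--Taubes), and your outline --- cancellation of the extrinsic first-order terms $h_{ij}e_i\cdot e_0\cdot\overline\nabla_{e_j}$ against those hidden in $\overline\nabla^*\overline\nabla$, followed by identification of the curvature term via the spin curvature formula and the first Bianchi identity --- is exactly the argument of that reference. One caution on the step you yourself flag as error-prone: with signature $(-,+,\dots,+)$ the trace producing the scalar curvature carries $\epsilon_0=-1$, so the decomposition reads $R_{\overline g}=\sum_{i,j}\overline R_{ijij}-2\,{\rm Ric}_{\overline g\,00}$ rather than with a plus sign, and correspondingly the spin curvature formula is best written as $\tfrac14\overline R_{ij}{}^{\alpha\beta}\,e_\alpha\cdot e_\beta\cdot$ with indices raised by $\overline g$, so that the $\epsilon_\alpha$ factors are tracked consistently; with that bookkeeping your three blocks do assemble to the stated $\mathcal R$.
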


\begin{proof}
	See \cite{PT}.
\end{proof}

\begin{remark}\label{claim}{\rm 
	We recall that the DEC (\ref{intdec}) with $\Lambda=0$ implies that $\mathcal R\geq 0$. Indeed, a simple computation shows that 
	\[
	\mathcal R=\frac{1}{2}(\rho_0+J_{i}e_i\cdot e_0\cdot).
	\]
	Hence,
	if $\phi$ is an eigenvector of $\mathcal R$ then it is an eigenvector of $\mathcal J:=J_{i}e_i\cdot e_0\cdot$ as well, say with eigenvalue equal to $\lambda$. Thus,
	\begin{eqnarray*}
		\lambda^2|\phi|^2 & = & \langle J_{i}e_i\cdot e_0\cdot\phi , J_{j}e_j\cdot e_0\cdot\phi\rangle\\
		& = & J_{i}J_{j}\langle e_i\cdot\phi,e_j\cdot\phi\rangle\\
		& = & -J_{i}J_{j}\langle e_j\cdot e_i\cdot\phi,\phi\rangle\\
		& = & \left(\sum_i J_{i}^2\right)|\phi|^2, 
	\end{eqnarray*}
	so that $\lambda=\pm|J|$. The claim follows.}
	\end{remark}

By putting together the results above, and using a standard approximation procedure, we obtain a fundamental integration by parts formula which will play a key role in the proof of Theorem \ref{main}. Hereafter, $H^k(\mathbb{S}_M)$ denotes the Sobolev space of spinors whose derivatives up to order $k$ are in $L^2$. We refer to \cite{ABdL,GN} for its definition and basic properties. 

\begin{proposition}\label{put}
	If $\psi\in H^2_\loc(\mathbb S_M)$ and  $\Omega\subset M$ is compact then 
	\begin{equation}\label{intpart1}
	\int_{\Omega}(|\overline\nabla\psi|^2+\langle \mathcal R\psi,\psi\rangle-|\overline{\mathcal D}\psi|^2)dM=\int_{\partial\Omega}\langle(\overline\nabla_{e_i}+e_i\cdot\overline{\mathcal D}) \psi,\psi\rangle e_i\righthalfcup dM. 
	\end{equation}
	\end{proposition}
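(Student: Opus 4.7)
The plan is to combine formulas \eqref{1}, \eqref{2} from Proposition \ref{moredir} with the Weitzenb\"ock--Lichnerowicz identity \eqref{wetz}, and then apply Stokes' theorem, finally extending from smooth spinors to $H^2_{\loc}$ sections by approximation.

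First I would assume that $\psi$ is smooth, and introduce the two $(n-1)$-forms
\[
\theta = \langle e_i \cdot \overline{\mathcal D}\psi, \psi \rangle \, e_i \righthalfcup dM, \qquad
\eta = \langle \overline\nabla_{e_i} \psi, \psi \rangle \, e_i \righthalfcup dM
\]
already considered in Proposition \ref{moredir}. Adding \eqref{1} and \eqref{2} gives
\[
d(\theta + \eta) = \Bigl( \langle \overline{\mathcal D}^2\psi, \psi\rangle - \langle \overline\nabla^*\overline\nabla \psi, \psi\rangle + |\overline\nabla\psi|^2 - |\overline{\mathcal D}\psi|^2 \Bigr) dM,
\]
and substituting the Weitzenb\"ock--Lichnerowicz formula $\overline{\mathcal D}^2 - \overline\nabla^*\overline\nabla = \mathcal R$ collapses the first two terms to $\langle \mathcal R \psi, \psi\rangle$. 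An application of Stokes' theorem on the compact region $\Omega$ then yields \eqref{intpart1}, once one observes that $\theta + \eta$ pulled back to $\partial\Omega$ agrees with the form
\[
\langle (\overline\nabla_{e_i} + e_i \cdot \overline{\mathcal D}) \psi, \psi \rangle \, e_i \righthalfcup dM.
\]

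The remaining issue is the regularity: the statement only requires $\psi \in H^2_{\loc}(\mathbb S_M)$, so $\overline\nabla^*\overline\nabla \psi$ and $\overline{\mathcal D}^2\psi$ are a priori merely $L^2_{\loc}$, while $\overline\nabla \psi$ and $\overline{\mathcal D}\psi$ lie in $H^1_{\loc}$. I would handle this by a standard density argument: choose a sequence of smooth spinors $\psi_k \to \psi$ in $H^2(\Omega')$ for some slightly larger compact $\Omega \Subset \Omega'$ (using the density results discussed in \cite{ABdL,GN}), write \eqref{intpart1} for each $\psi_k$, and pass to the limit. The interior integrand is continuous in the $H^2$-norm of $\psi$, and the boundary integrand is continuous in the $H^{3/2}$-trace on $\partial\Omega$, which is controlled by $\|\psi_k\|_{H^2(\Omega')}$ via the standard trace theorem.

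The main obstacle I expect is mostly bookkeeping rather than a genuine difficulty: one must be careful that the auxiliary formulas of Proposition \ref{moredir}, which are derived pointwise using a frame with $\nabla_{e_i} e_j\big|_p = 0$, do not tacitly use more regularity than $H^2$. Since both \eqref{1} and \eqref{2} are exterior derivative identities of first order in $\overline\nabla\psi$ and $\overline{\mathcal D}\psi$, they make distributional sense for $\psi \in H^2_{\loc}$, so Stokes' theorem applies to the resulting $L^1$ form after integration by parts is justified in the $H^2$-closure. No boundary condition on $\psi$ is imposed here, which is consistent with the fact that \eqref{intpart1} is a pure integration by parts identity; the MIT bag condition of Definition \ref{defmit} will only enter later, when one wishes to control the sign of the boundary term $\int_{\partial\Omega} \langle (\overline\nabla_{e_i} + e_i \cdot \overline{\mathcal D})\psi, \psi\rangle\, e_i \righthalfcup dM$ along $\Sigma$.
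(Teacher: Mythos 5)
Your proposal is correct and follows exactly the route the paper intends: Proposition \ref{put} is obtained by adding \eqref{1} and \eqref{2}, substituting the Weitzenb\"ock--Lichnerowicz identity \eqref{wetz}, applying Stokes' theorem on $\Omega$, and invoking a standard density argument for $\psi\in H^2_\loc(\mathbb S_M)$. The only cosmetic quibble is that the boundary integrand involves first derivatives of $\psi$, so its convergence is controlled by the $H^{1/2}$-trace of $\overline\nabla\psi_k$ rather than the $H^{3/2}$-trace of $\psi_k$, but this is still bounded by $\|\psi_k\|_{H^2(\Omega')}$ and does not affect the argument.
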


The next proposition describes how the operator in the right-hand side of (\ref{intpart1})  decomposes into its intrinsic and extrinsic components. This is a key step toward simplifying our  approach to Theorems \ref{main} and \ref{maintheohyp} as it allows us to make use of the ``intrinsic''  computations in \cite{ABdL, AdL}.

\begin{proposition}\label{decomp}
	One has 
	\begin{equation}\label{decom2}
	\overline\nabla_{e_i}+e_i\cdot\overline{\mathcal D}=  {\nabla_{e_i} +e_i\cdot{\mathcal D}}-{\frac{1}{2}
		\pi_{ij}e_0\cdot e_j\cdot},
	\end{equation}
	where $\mathcal D=e_j\cdot\nabla_{e_j}$ is the intrinsic Dirac operator.
	\end{proposition}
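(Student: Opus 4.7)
The plan is to compute both terms on the left-hand side of \eqref{decom2} directly using the spinorial Gauss formula \eqref{spingauss} and the Clifford relations \eqref{cliff}, and then collect everything in terms of $\pi_{ij} = h_{ij} - (\tr_g h)\delta_{ij}$.

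First I would rewrite $\overline{\mathcal D}$ in terms of the intrinsic Dirac operator $\mathcal D$. Starting from \eqref{spingauss} and Clifford-multiplying by $e_i$ on the left, I obtain
\[
\overline{\mathcal D} = e_i\cdot\overline\nabla_{e_i} = e_i\cdot\nabla_{e_i}-\tfrac{1}{2}h_{ij}\,e_i\cdot e_0\cdot e_j\cdot = \mathcal D + \tfrac{1}{2}h_{ij}\,e_0\cdot e_i\cdot e_j\cdot,
\]
where I used $e_i\cdot e_0 = -e_0\cdot e_i$ (since $\langle e_i,e_0\rangle=0$). Since $h_{ij}$ is symmetric, the Clifford relations yield $h_{ij}e_i\cdot e_j\cdot = -\tr_g h$, so
\[
\overline{\mathcal D} = \mathcal D - \tfrac{1}{2}(\tr_g h)\,e_0\cdot.
\]

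Next, I multiply by $e_i$ on the left (again using $e_i\cdot e_0 = -e_0\cdot e_i$) to obtain
\[
e_i\cdot\overline{\mathcal D} = e_i\cdot\mathcal D + \tfrac{1}{2}(\tr_g h)\,e_0\cdot e_i\cdot.
\]
Adding this to $\overline\nabla_{e_i} = \nabla_{e_i} - \tfrac{1}{2}h_{ij}\,e_0\cdot e_j\cdot$ from \eqref{spingauss} gives
\[
\overline\nabla_{e_i}+e_i\cdot\overline{\mathcal D} = \nabla_{e_i}+e_i\cdot\mathcal D - \tfrac{1}{2}\bigl(h_{ij}-(\tr_g h)\delta_{ij}\bigr)\,e_0\cdot e_j\cdot,
\]
and since $\pi_{ij} = h_{ij} - (\tr_g h)\delta_{ij}$ this is precisely the claimed identity.

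No step is genuinely hard here; the only thing to watch is the sign bookkeeping when moving $e_0$ past $e_i$ and the cancellation $h_{ij}e_i\cdot e_j\cdot = -\tr_g h$ coming from the symmetry of $h$ combined with the antisymmetry of $e_i\cdot e_j\cdot$ for $i\neq j$. The conceptual point is that the ``trace part'' of $h$, which would otherwise contribute an unwanted $(\tr_g h)\,e_0\cdot e_i\cdot$ term, gets absorbed exactly when one replaces $h$ by $\pi$, which is why the Newton tensor $\pi$ rather than $h$ appears naturally in Witten-type boundary integrands.
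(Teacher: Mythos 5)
Your proof is correct and follows essentially the same route as the paper: both start from the spinorial Gauss formula, left-multiply by $e_i$, and use the symmetry of $h$ together with the Clifford relations to reduce the extra term to $-\frac12\pi_{ij}e_0\cdot e_j\cdot$. The only cosmetic difference is that you perform the contraction $h_{kl}e_k\cdot e_l\cdot=-\tr_g h$ already at the level of $\overline{\mathcal D}$, giving the clean intermediate identity $\overline{\mathcal D}=\mathcal D-\tfrac12(\tr_g h)\,e_0\cdot$, whereas the paper carries the full cubic Clifford term along and cancels the off-diagonal part at the end.
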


\begin{proof}
	From (\ref{spingauss}) we have 
\begin{eqnarray*}
	\overline{\mathcal D} \: =\:  e_k\cdot 	\overline\nabla_{e_k}
	& = & e_k\cdot\left(\nabla_{e_k}-\frac{1}{2}h_{kl}e_0\cdot e_l\cdot\right)\\
	& = & {\mathcal D}-\frac{1}{2}h_{kl}e_k\cdot e_0\cdot e_l\cdot
\end{eqnarray*}
so that 
\begin{eqnarray*}
		\overline\nabla_{e_i}+e_i\cdot	\overline{\mathcal D} & = & \nabla_{e_i}-\frac{1}{2}h_{ij}e_0\cdot e_j\cdot+e_i\cdot\left({\mathcal D}-\frac{1}{2}h_{kl}e_k\cdot e_0\cdot e_l\cdot\right)\\
	& = &  \nabla_{e_i}-\frac{1}{2}h_{ij}e_0\cdot e_j\cdot+e_i\cdot{\mathcal D}-\frac{1}{2}h_{kl}e_i\cdot e_k\cdot e_0\cdot e_l\cdot \\
	& = &  \nabla_{e_i} +e_i\cdot{\mathcal D}-\frac{1}{2}h_{ij}e_0\cdot e_j\cdot+ \frac{1}{2}h_{kl}e_i\cdot e_k\cdot e_l\cdot e_0\cdot\\
	& = &  \nabla_{e_i} +e_i\cdot{\mathcal D}-\frac{1}{2}h_{ij}e_0\cdot e_j\cdot +
	\frac{1}{2}h_{kk}e_0\cdot e_i\cdot\\
	& & \quad +\frac{1}{2}\sum_{k\neq l}h_{kl}e_i\cdot(\underbrace{e_k\cdot e_l\cdot+e_l\cdot e_k\cdot}_{=0})e_0\cdot,
\end{eqnarray*}
and the result follows.
\end{proof}


\section{The asymptotically flat case}\label{proofmain}

In this section, we prove Theorem \ref{main}. Assume that the embedding $(M,g)\hookrightarrow (\overline M,\overline g)$ is {asymptotically flat} in the sense of Definition \ref{def:as:hyp}. In particular, in the asymptotic region we have 
$g_{ij}=\delta_{ij}+a_{ij}$ with
\[
|a_{ij}|+|x||\partial_{x_k} a_{ij}|+|x|^2|\partial_{x_k}\partial_{x_l}a_{ij}|=O(|x|^{-\tau}).
\]
where $\tau>(n-2)/2$. 
Thus, we may orthonormalize the standard frame $\{\partial_{x_i}\}$ by means of 
\begin{equation}\label{orthon}
e_i=\partial_{x_i}-\frac{1}{2}a_{ij}\partial_{x_j}+O(|x|^{-\tau})=\partial_{x_i}+O(|x|^{-\tau}),
\end{equation}
and we can further assume that $e_n = \varrho$ is the inward pointing normal vector to $\Sigma \hookrightarrow M$. 
We denote with a hat the extension, to the spinor bundle, of the linear isometry 
\begin{equation}\label{lineariso}
T\mathbb{R}^n_{+,r_0}\to TM_{\rm ext}, \qquad X^i \partial_{x_i} \mapsto X^i e_i.
\end{equation}
Note that a spinor $\phi$ on $\mathbb{S}_{\mathbb{R}^n_{+, r_0}}$ satisfies the MIT bag boundary condition (\ref{mitcond}) with $\omega = i \partial_{x_n} \cdot$, if and only if its image $\hat \phi$ on $\mathbb{S}_{M_{\rm{ext}}}$ satisfies (\ref{mitcond}) with $\omega=i e_n\cdot$.

We begin by specializing the identity in Proposition \ref{put} to the  case $\Omega=\Omega_r$, the compact region in an initial data set $(M,g,h,\Sigma)$ determined by the coordinate hemisphere $S_{r,+}^{n-1}$; see Figure \ref{fig1}. Notice that $\partial \Omega_r=S^{n-1}_{r,+}\cup \Sigma_r$, where $\Sigma_r$ is the portion of $\Sigma$ contained in $\Omega_r$.

\begin{proposition}\label{intpart11}
		Assume that $\psi\in H^2_\loc(\mathbb S_M)$ satisfies the boundary condition (\ref{mitcond}) along $\Sigma$. Then
		\begin{eqnarray}\label{intpartf0}
\int_{\Omega_r}(|\overline\nabla\psi|^2+\langle \mathcal R\psi,\psi\rangle-|\overline{\mathcal D}\psi|^2)dM & = & \int_{S^{n-1}_{r,+}}\langle(\overline\nabla_{e_i}+e_i\cdot \overline{\mathcal D}) \psi,\psi\rangle e_i\righthalfcup dM\nonumber\\ 
& & \quad -\frac{1}{2}\int_{\Sigma_r}\langle(H_g+\mathcal U)\psi,\psi\rangle d\Sigma,
\end{eqnarray}	
where $\mathcal U=\pi_{An}e_0\cdot e_A\cdot$ and $e_n=\varrho$. 
\end{proposition}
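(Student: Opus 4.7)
The plan is to apply the general identity \eqref{intpart1} of Proposition \ref{put} to $\Omega=\Omega_r$ and then isolate the contribution coming from the piece $\Sigma_r$ of $\partial\Omega_r$ by combining Proposition \ref{decomp} with the MIT bag boundary condition \eqref{mitcond}. Since the outward unit normal to $\Omega_r\subset M$ is $\mu$ along $S^{n-1}_{r,+}$ but equals $-\varrho=-e_n$ along $\Sigma_r$ (the vector $\varrho$ points \emph{into} $M$), Stokes' theorem rewrites the boundary integral in \eqref{intpart1} as
\[
\int_{S^{n-1}_{r,+}}\langle(\overline\nabla_{e_i}+e_i\cdot\overline{\mathcal D})\psi,\psi\rangle\, e_i\righthalfcup dM \;-\;\int_{\Sigma_r}\langle(\overline\nabla_{e_n}+e_n\cdot\overline{\mathcal D})\psi,\psi\rangle\, d\Sigma,
\]
so the claim reduces to showing that the $\Sigma_r$-integrand equals $\tfrac12\langle(H_g+\mathcal U)\psi,\psi\rangle$ after integration.

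Applying Proposition \ref{decomp} with $i=n$ and using the symmetry of $\pi$ to split $\pi_{nj}e_0\cdot e_j\cdot = \mathcal U + \pi_{nn}\,e_0\cdot e_n\cdot$, I decompose
\[
\overline\nabla_{e_n}+e_n\cdot\overline{\mathcal D}\;=\;(\nabla_{e_n}+e_n\cdot\mathcal D)\;-\;\tfrac12\mathcal U\;-\;\tfrac12\pi_{nn}\,e_0\cdot e_n\cdot.
\]
The $\pi_{nn}$ contribution dies against an MIT bag spinor: since $e_0\cdot$ is Hermitian and $e_n\cdot$ is skew-Hermitian with respect to $\langle\,,\rangle$ and they anticommute, $e_0\cdot e_n\cdot$ is Hermitian, so $\langle e_0\cdot e_n\cdot\psi,\psi\rangle\in\mathbb R$; but \eqref{mitcond} gives $e_n\cdot\psi=\mp i\psi$, which forces this quantity to be purely imaginary, hence zero.

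The remaining intrinsic term $\int_{\Sigma_r}\langle(\nabla_{e_n}+e_n\cdot\mathcal D)\psi,\psi\rangle\, d\Sigma$ is purely Riemannian and matches the boundary computation carried out in the time-symmetric setting of \cite[Sec 5.2]{ABdL}. The spinorial Gauss formula for $\Sigma\hookrightarrow M$ (the direct analogue of \eqref{spingauss}) lets one rewrite $\nabla_{e_n}+e_n\cdot\mathcal D = e_n\cdot\mathcal D^\partial+\tfrac12 H_g$, where $\mathcal D^\partial=e_A\cdot\nabla^\Sigma_{e_A}$ is the tangential Dirac-type operator on $\mathbb S_M|_\Sigma$. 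The term $\int\langle e_n\cdot\mathcal D^\partial\psi,\psi\rangle\, d\Sigma$ is then seen to be purely imaginary via integration by parts on $\Sigma_r$ together with $e_n\cdot\psi=\mp i\psi$: the Clifford prefactor $e_n\cdot$ turns $\mathcal D^\partial$ into an essentially skew-adjoint operator on the MIT bag subspace, so its contribution drops out of the real identity, leaving the clean term $\tfrac12\int_{\Sigma_r}H_g|\psi|^2\, d\Sigma$.

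Combining the three pieces and inserting the outward-normal sign $-1$ produces \eqref{intpartf0}. The main obstacle is the intrinsic step just invoked: the corner $S^{n-2}_r=\partial\Sigma_r$ could in principle generate an extra boundary contribution during the integration by parts on $\Sigma_r$, but this is handled by a standard density argument, approximating $\psi\in H^2_{\mathrm{loc}}$ by smooth MIT bag spinors with compact support in $\Omega_r\cup(\Sigma_r\setminus\partial\Sigma_r)$ and passing to the limit, exactly as in \cite{ABdL}.
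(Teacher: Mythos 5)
Your overall strategy is the same as the paper's: apply Proposition \ref{put} on $\Omega_r$, use the decomposition \eqref{decom2} along $\Sigma_r$, kill the $\pi_{nn}$-term with the MIT bag condition (your Hermitian-versus-purely-imaginary argument for $\langle e_0\cdot e_n\cdot\psi,\psi\rangle=0$ is correct and is essentially Remark \ref{rem_MIT}), and reduce the remaining intrinsic term to the boundary Dirac operator. However, as written your pieces do not assemble to the stated formula. You convert the form integral over $\Sigma_r$ into a scalar integral with an explicit factor $-1$ (outward normal $=-e_n$), and you claim the intrinsic part contributes $+\tfrac12 H_g$; combined with the term $-\tfrac12\mathcal U$ coming from \eqref{decom2}, the total is
\[
-\int_{\Sigma_r}\Big(\tfrac12 H_g|\psi|^2-\tfrac12\langle\mathcal U\psi,\psi\rangle\Big)\,d\Sigma
=-\tfrac12\int_{\Sigma_r}\langle(H_g-\mathcal U)\psi,\psi\rangle\, d\Sigma,
\]
i.e.\ the sign of $\mathcal U$ is flipped relative to \eqref{intpartf0}. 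The source is a mismatch between your orientation convention for restricting $e_i\righthalfcup dM$ to $\Sigma_r$ and the sign of the mean-curvature term in your Gauss-type identity; the paper keeps everything at the level of the restricted form (with $e_n\righthalfcup dM|_{\Sigma_r}=d\Sigma$, $e_n=\varrho$ inward, consistent with the computation in Proposition \ref{sefaddir}), so that both $-\tfrac12 H_g$ and $-\tfrac12\mathcal U$ emerge with the same relative sign, matching \cite[p.~697]{ABdL}. Since the eigenvalues of $\mathcal U$ are $\pm|(\varrho\righthalfcup\pi)^\downvdash|$ this would not damage the positivity argument, but it does matter for the momentum term in Theorem \ref{genwit}, where the $\pi$-contributions on $\Sigma$ and on $S^{n-1}_{r,+}$ must carry consistent signs; you need to fix one convention and recheck.

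The second point concerns the term $\int_{\Sigma_r}\langle e_n\cdot\mathcal D^\partial\psi,\psi\rangle\,d\Sigma$. You dispose of it by an integration by parts on $\Sigma_r$ and then invoke a density argument to suppress the corner contribution along $S^{n-2}_r=\partial\Sigma_r$; that density argument is not convincing as stated, since truncating $\psi$ near the corner changes the integral you are trying to compute, not just the corner term. The integration by parts is unnecessary: $\mathcal D^\partial$ (equivalently the operator $D^\intercal$ of \cite{ABdL}) anticommutes with the involution $\omega=i\varrho\,\cdot$, hence maps the $\pm 1$ eigenspace of $\omega$ into the $\mp 1$ eigenspace; since $\omega$ is Hermitian and $\psi$ lies in one eigenspace, $\langle D^\intercal\psi,\psi\rangle=0$ \emph{pointwise} on $\Sigma$. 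This is the argument the paper uses, and it removes the corner issue entirely.
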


\begin{proof}
	We must work out the contribution of the right-hand side of (\ref{intpart1}) over $\Sigma_r$. By (\ref{decom2}),
	\begin{eqnarray*}
	\int_{\Sigma_r}\langle(\overline\nabla_{e_i}+e_i\cdot\overline{\mathcal D}) \psi,\psi\rangle e_i\righthalfcup dM  
	& = & 
	\int_{\Sigma_r}\langle(\nabla_{e_i}+e_i\cdot{\mathcal D}) \psi,\psi\rangle e_i\righthalfcup dM\\
	& &  -\frac{1}{2}\int_{\Sigma_r}\langle\mathcal U\psi,\psi\rangle d\Sigma
	     -\frac{1}{2}\int_{\Sigma_r}\pi_{nn}\langle e_0\cdot e_n\cdot\psi,\psi\rangle d\Sigma.
	\end{eqnarray*}
Because of Remark \ref{rem_MIT}, the MIT bag boundary condition guarantees that 
	\[
	\langle e_0\cdot e_n\cdot\psi,\psi\rangle = (\varrho \cdot \psi, \psi) = 0, 
	\]
and the last integral vanishes. On the other hand, it is known that 
\[
\int_{\Sigma_r} \langle (\nabla_{e_i}+e_i\cdot{\mathcal D}) \psi,\psi\rangle e_i\righthalfcup dM=\int_{\Sigma_r}\left\langle \left(D^{\intercal}-\frac{H_g}{2}\right)\psi,\psi\right\rangle d\Sigma,
\]
where $D^\intercal$ is a certain Dirac-type operator associated to the embedding $\Sigma\hookrightarrow M$; see \cite[p.697]{ABdL}  for a detailed discussion of this operator. However, $D^\intercal$ intertwines the projections defining the boundary conditions and this easily implies that 
$\langle D^\intercal\psi,\psi\rangle=0$. 
	\end{proof}

\begin{remark}\label{decbdr}{\rm 
	Let $\psi \in \Gamma(\mathbb{S}_{\Sigma})$ be an eigenvector of the linear map 
	$\mathcal U = \pi_{An} e_0\cdot e_A$, say with eigenvalue $\lambda$. We then have
	\begin{eqnarray*}
		\lambda^2|\psi|^2 
		& = & \langle \pi_{An}e_0\cdot e_A\cdot\psi,\pi_{Bn}e_0\cdot e_B\cdot\psi\rangle\\
		& = & 
		\pi_{An}\pi_{Bn}\langle e_A\cdot\psi,e_B\cdot\psi\rangle\\
		& = & 
		-\pi_{An}\pi_{Bn}\langle e_B\cdot e_A\cdot\psi,\psi\rangle\\
		& = & \left(\sum_A \pi_{An}^2\right)|\psi|^2.
	\end{eqnarray*}
	Thus, the eigenvalues  of $\mathcal U$ are 
	$\pm\sqrt{\sum_A \pi^2_{An}}=\pm|(\varrho\righthalfcup \pi)^\downvdash|$. In particular, if the DEC (\ref{bddectan2}) holds then $H_g+\mathcal U\geq 0$. }
	\end{remark}

The next step  involves a judicious choice of a spinor $\psi$ to be used in (\ref{intpartf0}) above. Denote with $\Gamma_c(\mathbb{S}_M)$ the space of smooth spinors with compact support in $M$, and with $\mathscr{H}$ its completion with respect to the norm 
	\[
	\|\psi\|_{\mathscr{H}} : = \|\nabla \psi\|_2 + \left\| \frac{\psi}{r} \right\|_{2},
	\]
where $r(x)>0$ for $x \in M$ and $r(x) = |x|$ in a fixed asymptotic chart, and $\|\cdot \|_2$ is the $L^2$-norm on the entire $M$.

\begin{proposition}\label{surjdir}
	Assume that the DECs (\ref{intdec2}) and (\ref{bddectan2}) hold. If $\eta \in L^2(\mathbb S_M)$, there exists a unique $\varphi\in \mathscr{H}$ solving any of the boundary value problems
	\begin{equation}\label{eq:phi}
	\left\{
	\begin{array}{rcll}
	\overline{\mathcal D}\varphi & = & \eta & \qquad \text{on } \, M\\
	\omega \varphi & = & \pm\varphi & \qquad \text{on } \, \Sigma.
	\end{array}
	\right.
	\end{equation}
Moreover, $\varphi \in H^2_\loc(\mathbb S_M)$ whenever $\eta \in H^1_\loc(\mathbb S_M)$.
\end{proposition}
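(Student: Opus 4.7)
The plan is to establish Proposition \ref{surjdir} by combining an a priori coercivity estimate, derived from the Weitzenböck-type integration by parts of Proposition \ref{intpart11}, with a standard functional-analytic closed-range/adjointness argument based on Proposition \ref{sefaddir}. First I would prove the key estimate: for every $\psi\in\Gamma_c(\mathbb S_M)$ satisfying $\omega\psi=\pm\psi$ on $\Sigma$,
\[
\|\overline\nabla\psi\|_{L^2(M)}^2\leq \|\overline{\mathcal D}\psi\|_{L^2(M)}^2.
\]
This is immediate from \eqref{intpartf0} applied with $\Omega=\Omega_r$ for $r$ large enough that $\mathrm{supp}\,\psi\subset\Omega_r$ (so that the boundary integral over $S^{n-1}_{r,+}$ vanishes), together with the two positivity statements: $\langle\mathcal R\psi,\psi\rangle\geq 0$ under the interior DEC \eqref{intdec2} (Remark \ref{claim}) and $H_g+\mathcal U\geq 0$ under the tangential boundary DEC \eqref{bddectan2} (Remark \ref{decbdr}). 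Combining with a standard weighted Hardy inequality on $M$ (using $n\geq 3$ and the asymptotic flatness, which makes $r^{-1}$ integrable against $|\nabla\psi|^2$ via a cutoff argument in the asymptotic end) and with the spinorial Gauss formula \eqref{spingauss}, whose zeroth-order term $h_{ij}e_0\cdot e_j\cdot$ is small at infinity thanks to $|h|=O(|x|^{-\tau-1})$, I obtain the global estimate
\[
\|\psi\|_{\mathscr H}\leq C\,\|\overline{\mathcal D}\psi\|_{L^2(M)}.
\]
A density argument extends this bound to the closure $\mathscr H_\pm$ of $\{\psi\in\Gamma_c(\mathbb S_M):\omega\psi=\pm\psi\text{ on }\Sigma\}$ in $\mathscr H$.

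To produce the solution to \eqref{eq:phi}, the estimate implies that $\overline{\mathcal D}:\mathscr H_\pm\to L^2(\mathbb S_M)$ is injective with closed range. By Proposition \ref{sefaddir}, for $\psi\in\mathscr H_\pm$ and $\xi\in\mathscr H_\mp$ there holds
\[
\int_M\langle\overline{\mathcal D}\psi,\xi\rangle\,dM=\int_M\langle\psi,\overline{\mathcal D}\xi\rangle\,dM,
\]
(the boundary term from \eqref{selfpoint} vanishes because the two sign conditions $\omega\psi=\psi$ and $\omega\xi=-\xi$ force $\langle\varrho\cdot\psi,\xi\rangle=0$, as in Proposition \ref{sefaddir}). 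Thus the orthogonal complement of $\mathrm{Ran}(\overline{\mathcal D}|_{\mathscr H_\pm})$ in $L^2$ is contained in $\ker(\overline{\mathcal D}|_{\mathscr H_\mp})$, which is trivial by the same a priori estimate with opposite sign. Consequently $\overline{\mathcal D}|_{\mathscr H_\pm}$ is surjective, and combined with injectivity this yields the unique solution $\varphi\in\mathscr H_\pm$ for any $\eta\in L^2(\mathbb S_M)$.

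Finally, the regularity statement $\varphi\in H^2_\loc(\mathbb S_M)$ for $\eta\in H^1_\loc(\mathbb S_M)$ follows from standard elliptic boundary regularity applied to the first-order elliptic operator $\overline{\mathcal D}$. In the interior this reduces to the corresponding regularity for the intrinsic Dirac operator $\mathcal D$, since by Proposition \ref{decomp} the difference is a smooth bundle endomorphism (a zeroth-order term involving $h$). At $\Sigma$, the MIT bag projection $\tfrac12(1\pm\omega)$ is a local elliptic boundary condition for $\mathcal D$, hence also for $\overline{\mathcal D}$, so the usual elliptic bootstrap upgrades $H^1_\loc$ to $H^2_\loc$.

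The principal obstacle I anticipate is the interplay between the weighted norm defining $\mathscr H$ and the trace theory needed to make sense of the MIT bag condition for a limit of compactly supported spinors; one must check that the trace map $\mathscr H\to L^2_\loc(\Sigma)$ is well-behaved and that the boundary integrals in \eqref{intpartf0} pass to the limit. A secondary technicality is verifying that the Hardy-type inequality can be proved uniformly across the compact part of $M$ and the asymptotic end, which is why the estimate is stated in terms of $\|\psi/r\|_2$ with $r$ bounded away from zero on compact sets; this is handled by a standard partition-of-unity argument but must be executed with attention to the non-compact boundary $\Sigma$.
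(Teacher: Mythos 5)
Your outline follows essentially the same route as the paper's proof: coercivity $\|\overline{\mathcal D}\psi\|_2\ge C\|\psi\|_{\mathscr H}$ on $\mathscr H_\pm$ derived from Proposition \ref{intpart11}, the two DECs via Remarks \ref{claim} and \ref{decbdr}, a Hardy/weighted Poincar\'e inequality and the spinorial Gauss formula, followed by a duality argument based on Proposition \ref{sefaddir} to get surjectivity, and elliptic regularity for the MIT condition at the end.

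The one step you assert without justification is the inclusion of the $L^2$-orthogonal complement of $\mathrm{Ran}(\overline{\mathcal D}|_{\mathscr H_\pm})$ into $\ker(\overline{\mathcal D}|_{\mathscr H_\mp})$. An element $\Psi$ of that complement is a priori only an $L^2$ spinor weakly solving the adjoint problem; before the injectivity estimate on $\mathscr H_\mp$ can be applied to it, one must show that $\Psi$ actually lies in $\mathscr H_\mp$, i.e.\ that it has an $L^2$ covariant derivative, the correct weighted decay, and a boundary trace satisfying the opposite MIT condition. The paper handles this with two ingredients: (i) a conformal change $\hat g=r^{-4/n}g$ producing a bounded-geometry manifold, so that the trace theorem of Grosse--Nakad applies and the boundary pairing $\int_\Sigma\langle(H_g+\mathcal U)\psi,\xi\rangle\,d\Sigma$ extends continuously to $\mathscr H\times\mathscr H$ (this is also what legitimizes passing the identity of Proposition \ref{intpart11} to the limit on $\mathscr H_\pm$, precisely the trace issue you flag at the end); and (ii) a cutoff argument, setting $\Psi_k=\eta_k\Psi$ and using the coercivity estimate on the differences $\Psi_k-\Psi_j$ to show that $\{\Psi_k\}$ is Cauchy in $\mathscr H$, whence $\Psi\in\mathscr H_\mp$ and therefore $\Psi=0$. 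You correctly anticipate these obstacles but leave them unresolved; with those two ingredients supplied, your argument closes and coincides with the paper's.
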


\begin{proof}
We adapt arguments in \cite{GN,PT,bartnikchrusciel}. First, observe the validity of the Euclidean Hardy (weighted Poincar\'e) inequality 
	\begin{equation}\label{hardy}
	\int_{\R^n_+} \frac{(n-2)^2}{4|x|^2} |\psi|^2 \le \int_{\R^n_+} |\nabla \psi|^2 \qquad \forall \, \psi \in \Gamma_c(\mathbb S_{\R^n_+}).  
	\end{equation}
Indeed, it is sufficient to extend $\varphi$ by reflection across $\partial \R^n_+$ and to apply the Hardy inequality for spinors in $\mathbb S_{\R^n}$. We sketch a brief argument to prove the latter, for the sake of completeness: consider a constant spinor $\psi_0$ of norm $1$ on $\R^n_+$ and set $\tau : = w \psi_0$, with $w(x) = |x|^{\frac{2-n}{2}}$ being the square root of the Green kernel of $-\Delta_\delta$ on $\R^n$ with pole at the origin. A computation shows that 
	\[
	\mathcal{D} \tau = \nabla w \cdot \psi_0, \qquad \mathcal{D}^2 \tau = \frac{(n-2)^2}{4|x|^2}\tau \qquad \text{on } \, \R^n,
	\]
Given $\psi \in \Gamma_c(\mathbb S_{\R^n})$ supported outside of the origin, integrating by parts against $(|\psi|^2/w) \psi_0$ with respect to the Euclidean measure we obtain
	\[
	\begin{array}{lcl}
	\disp \int_{\R^n} \frac{(n-2)^2}{4|x|^2} |\psi|^2 & \le & \disp \int_{\R^n} \frac{|\psi|^2}{w} \langle \psi_0, \mathcal{D}^2 \tau \rangle \\[0.4cm]
	& = & \disp \int_{\R^n} \frac{|\psi|^2}{w^2} \langle \psi_0, \nabla w \cdot \mathcal{D}\tau \rangle - 2 \int_{\R^n} \frac{\langle \psi, \nabla_{\partial_{x_i}} \psi \rangle}{w} \langle \psi_0, \partial_{x_i} \cdot \mathcal{D}\tau \rangle \\[0.4cm]
	& \le & \disp - \int_{\R^n} \frac{|\psi|^2}{w^2} |\nabla w|^2 + 2 \int_{\R^n} \frac{|\psi| | \nabla \psi|}{w} |\nabla w| \\[0.4cm]
	& \le & \disp \int_{\R^n} |\nabla \psi|^2, 
	\end{array}
	\]
where in the second and third lines we used, respectively, Cauchy-Schwarz and Young inequalities. Pick an asymptotic chart $\R^{n}_{+,r_0} \to M_{{\rm ext}}$, and write for convenience $M_R = \{x \in M : |x|>R\}$. Let $\delta$ denote a metric on $M$ that is Euclidean on $M_{\rm ext}$, and let $r$ be a positive function on $M$ that equals $|x|$ on $M_{\rm ext}$. Define also a family of cut-offs $\{\eta_R\} \subset C^\infty_c(M)$ satisfying
	\begin{equation}\label{def_etaR}
0 \le \eta_R \le 1, \qquad \eta_R = 1 \ \text{ on } \, M \backslash M_R, \qquad \mathrm{spt} \, \eta_R \subset M \backslash M_{2R}, \qquad |\nabla \eta_R| \le \frac{\kappa}{R}	
	\end{equation}
for some constant $\kappa>0$. With the aid of the isometry \eqref{lineariso}, we deduce from \eqref{hardy} the existence of a constant $C>0$ only depending on $\|g-\delta\|_\infty$ such that
	\begin{equation}\label{wp_infty}
	\left\| \frac{\varphi}{r} \right\|_2 \le C \|\nabla \varphi\|_2 \qquad \forall \, \varphi \in \Gamma_c(\mathbb S_{M_R}),
	\end{equation}
where now the norms and connections are taken with respect to $g$. A gluing argument with the aid of the cutoff $\eta_R$ in \eqref{def_etaR} and of the Poincar\'e inequality on $M \backslash M_{R}$ guarantees that the weighted Poincar\'e inequality
	\begin{equation}\label{eq_wp_global}
	\left\| \frac{\varphi}{r} \right\|_{2} \le C \|\nabla \varphi\|_{2} \qquad \forall \, \varphi \in \Gamma_c(\mathbb S_M)
	\end{equation}
holds for some constant $C>0$. From the spinorial Gauss Equation and using $|h| \le C_1 r^{-\tau-1}$, 
	\begin{equation}\label{eq_nablabarnabla}
	\begin{array}{lcl}
	\|\overline \nabla \psi\|_2 & \le & \disp \|\nabla \psi\|_2 + \||h| \psi\|_2 \le \|\nabla \psi\|_2 + C \left\|\frac{\psi}{r}\right\|_2 \le C_1 \|\psi\|^2_{\mathscr{H}} \qquad \forall \, \psi \in \Gamma_c(\mathbb S_M), \\[0.3cm]	
	\|\overline \nabla \psi\|_2 & \ge & \disp \|\nabla \psi\|_2 - \||h| \psi\|_2 \ge \|\nabla \psi\|_2 - \frac{C_1}{R^\tau} \left\|\frac{\psi}{r}\right\|_2 \qquad \forall \, \psi \in \Gamma_c(\mathbb S_{M_R}).
	\end{array}
	\end{equation}
In particular, for $R$ large enough, because of \eqref{eq_wp_global} the term containing $R^{-\tau}$ in the second line can be absorbed into the norm $\|\cdot\|_{\mathscr{H}}$, yielding to
	\begin{equation}\label{low_MR}
	C_2 \|\psi\|_{\mathscr{H}}^2 \le \|\overline \nabla \psi\|_2 \qquad \forall \psi \in \Gamma_c(\mathbb S_{M_R}).
	\end{equation}
The gluing argument in \cite[Lemma 5.5]{PT} (cf. also \cite[Theorem 9.5]{bartnikchrusciel}) guarantees that
	\[
	C_3 \|\psi\|_{\mathscr{H}}^2 \le \|\overline \nabla \psi\|_2 \le C_1 \|\psi\|_{\mathscr{H}}^2 \qquad \forall \psi \in \Gamma_c(\mathbb S_{M})
	\]
for some constant $C_3>0$. Define $\mathscr{H}_\pm$ as the $\|\cdot \|_{\mathscr{H}}$-closure of the subspaces 
	\[
	\Gamma_c(\mathbb S_M)_\pm  : = \big\{ \psi \in \Gamma_c(\mathbb{S}_M) \ : \ \omega \psi = \pm \psi \ \text{ on } \, \Sigma \big\},
	\]
respectively.

From the very definition of $\overline{\mathcal{D}}$, $\|\overline{\mathcal D} \psi\|^2_2 \le n\|\overline{\nabla} \psi\|^2_2 \le C \|\psi\|^2_{\mathscr{H}}$, thus $\overline{\mathcal{D}}$ extends to a continuous operator $\overline{\mathcal{D}} : \mathscr{H} \to L^2(\mathbb S_M)$. The identity
	\begin{equation}\label{ide_spectral}
	\begin{array}{lcl}
\disp \int_M(|\overline\nabla\psi|^2+\langle \mathcal R\psi,\psi\rangle-|\overline{\mathcal D}\psi|^2)dM & = & \disp -\frac{1}{2}\int_{\Sigma}\langle(H_g+\mathcal U)\psi,\psi\rangle d\Sigma \\[0.4cm]
 & & \disp \qquad \forall \, \psi \in \Gamma_c(\mathbb S_M), \ \omega \psi = \pm \psi \ \text{on } \, \Sigma
	\end{array}
	\end{equation}	
in Proposition \ref{intpart11} together with the DECs imply that $\|\overline{\mathcal{D}}\psi \|_2 \ge \|\overline \nabla \psi\|_2$ for every $\psi \in \Gamma_c(\mathbb{S}_M)_\pm$, hence 
	\begin{equation}\label{eq_coerci}
	\disp \|\overline{\mathcal D} \psi\|_2 \ge C'\|\psi\|_{\mathscr{H}} \qquad \forall \, \psi \in \mathscr{H}_\pm.
	\end{equation}
To analyse more closely the boundary trace of $\psi \in \mathscr{H}$, define the conformally deformed metric $\hat g : = r^{-\frac{4}{n}} g = e^{2u} g$, where we set for convenience $u : = - \frac{2}{n}\log r$. Denote quantities referring to $\hat g$ with a hat and let $\hat{\mathbb S}_M$ be the space of spinors for $\hat g$.
To each $X \in TM$, set $\hat X = e^{-u} X$. It is known (cf. \cite[Section 4]{hijazi}) that there exists an isomorphism $\hat{}  : \mathbb S_M \to \hat{\mathbb S}_M$, which is an isometry on fibers and satisfies
	\[
	\widehat{X \cdot \psi} = \hat X \, \hat{\cdot} \, \hat \psi, \qquad \hat \nabla_X \hat \psi = \widehat{\nabla_X \psi} - \frac{1}{2} \widehat{ X \cdot \nabla u \cdot \psi} - \frac{1}{2} X(u) \hat \psi.
	\]
Let $\{e_i\}$ be an orthonormal frame for $g$, and $\hat e_i = e^{-u}e_i$ the corresponding orthonormal frame for $\hat g$. We compute
	\[
	\begin{array}{lcl}
	\disp |\hat \nabla \hat \psi|_{\hat g}^2 & = & \disp \sum_i |\hat \nabla_{\hat e_i} \hat \psi|_{\hat g}^2 \le 8 e^{-2u} \sum_i \left( |\widehat{\nabla_{e_i} \psi}|_{\hat g}^2 + \frac{1}{4} |\widehat{ e_i \cdot \nabla u \cdot \psi}|_{\hat g}^2 + \frac{1}{4} |e_i(u)|^2 |\hat \psi|_{\hat g}^2 \right) \\[0.4cm]
	& \le & \disp 8 e^{-2u} \Big( |\nabla \psi|^2 + |\nabla u|^2 |\psi|^2\Big).
	\end{array}
	\]
Therefore, 
	\begin{equation}\label{eq_boundbordo}
	\begin{array}{lcl}
	\disp \int_M |\hat \psi|^2_{\hat g} dM_{\hat g} & = & \disp \int_M |\psi|^2 r^{-2} dM \le \|\psi\|^2_{\mathscr{H}}, \\[0.3cm]
	\disp \int_M |\hat \nabla \hat \psi|_{\hat g}^2 dM_{\hat g} & \le & \disp C \int_M e^{-2u} \big( |\nabla \psi|^2 + |\nabla u|^2 |\psi|^2\Big)e^{nu} dM \le C \|\psi\|^2_{\mathscr{H}},
	\end{array}
	\end{equation}
where we used that $u$ is bounded from above and $|\nabla u| \le C |\nabla r|/r \le C'/r$. Hence, $\hat{} \ $ induces an inclusion 
	\[
	\hat{} \ \ : \ \ \mathscr{H} \longrightarrow H^1(\mathbb{S}_M, \hat g)
	\]
into a subspace $\hat{\mathscr{H}}$ of the Sobolev space $H^1$ of spinors for $(M, \hat g)$. The manifold $(M, \hat g)$ has bounded geometry in the sense of \cite[Definition 2.2]{GN}, so the trace theorem in \cite[Theorem 3.7]{GN} implies that the restriction 
	\[
	\mathrm{R} \ \ : \ \ \psi \in \mathscr{H} \quad \longmapsto \quad \hat \psi_{|\Sigma} \in H^{\frac{1}{2}}(\mathbb S_M|_\Sigma, \hat g)
	\]
is a bounded operator. Furthermore, we claim that the functional 
	\[
	(\psi, \xi) \in \Gamma_c(\mathbb S_M)\times \Gamma_c(\mathbb S_M) \longmapsto \int_\Sigma \langle (H_g + \mathcal{U})\psi, \xi \rangle d\Sigma
	\]
extends continuously on $\mathscr{H} \times \mathscr{H}$, so in particular \eqref{ide_spectral} extends by density to $\psi \in \mathscr{H}_\pm$. Using $|h| \le C r^{-1-\tau}$, we compute
	\[
	\begin{array}{lcl}
	\disp \left| \int_\Sigma \langle (H + \mathcal{U})\psi, \xi \rangle d\Sigma \right| & \le & \disp C \int_\Sigma r^{-1-\tau} |\langle \psi, \xi \rangle| d\Sigma = C \int_\Sigma r^{-1-\tau} |\langle \hat \psi, \hat \xi \rangle_{\hat g}| r^{2 \frac{n-1}{n}} d\Sigma_{\hat g} \\[0.4cm]
	& \le & \disp C\int_\Sigma r^{- \frac{(n-2)^2}{2n}} | \langle \hat \psi, \hat \xi \rangle_{\hat g}| d\Sigma_{\hat g},
	\end{array}
	\] 	
where, in the last inequality, we used that $\tau > (n-2)/2$. By \cite[Lemma 3.8]{GN}, the $L^2$ product 
	\[
	(\hat \psi, \hat \xi) \mapsto \int_\Sigma \langle \hat \psi, \hat \xi \rangle_{\hat g} d \Sigma_{\hat g}
	\]
on $\Gamma_c(\mathbb S_M, \hat g)$ extends continuously to $H^1(\mathbb S_M, \hat g) \times H^{-1}(\mathbb S_M, \hat g)$, hence to $H^1(\mathbb S_M, \hat g) \times H^{1}(\mathbb S_M, \hat g)$ in view of \cite[Remark 3.6]{GN}. Concluding, from \eqref{eq_boundbordo} we get
	\begin{equation}\label{eq_bounconti}
	\disp \left| \int_\Sigma \langle (H + \mathcal{U})\psi, \xi \rangle d\Sigma \right| \le C \|\psi\|_{\mathscr{H}} \|\xi\|_{\mathscr{H}},
	\end{equation}
as claimed.

Denote by $\hat \varrho$ the unit normal vector to $\Sigma \hookrightarrow (M,\hat g)$ corresponding to $\varrho$, and define the MIT boundary operators 
	\[ 
	\begin{array}{l}
	\disp \mathscr{B}_\pm :  H^{\frac{1}{2}}(\mathbb{S}_M|_\Sigma, \hat g) \to H^{\frac{1}{2}}(\mathbb S_M|_\Sigma, \hat g), \qquad \eta \mapsto \hat{\omega} \eta \mp \eta, \\[0.3cm] 
	\mathscr{K}_\pm : = \mathscr{B}_\pm \circ \mathrm{R}, \qquad \mathscr{K}_\pm : \mathscr{H} \to  H^{\frac{1}{2}}(\mathbb{S}_M|_\Sigma, \hat g),
	\end{array}
	\]
with $\hat \omega \eta : = i \hat \varrho \, \hat{\cdot} \, \eta$. Note that $\mathscr{B}_\pm$ and $\mathscr{K}_\pm$ are continuous. The commutativity $\hat \varrho \, \hat{\cdot} \, \hat \psi = \widehat{\varrho \cdot \psi}$ guarantees that $\mathscr{H}_\pm = \ker \mathscr{K}_\pm$, hence because of \eqref{eq_coerci} the operator
	\[
	(\overline{\mathcal{D}}, \mathscr{K}_\pm) \ \ : \ \ \mathscr{H} \quad \longmapsto L^2(\mathbb S_M) \times H^{\frac{1}{2}}(\mathbb{S}_M|_\Sigma, \hat g)		
	\]
is injective.  
We then proceed as in \cite{bartnikchrusciel}. By \eqref{eq_coerci}, the quadratic form 
	\[
	(\psi, \xi) \in \mathscr{H}_\pm \times \mathscr{H}_\pm \quad \longmapsto (\overline{\mathcal{D}} \psi, \overline{\mathcal{D}} \xi)_2 \in \mathbb{C}
	\]	
is coercive, so for given $\eta \in L^2(\mathbb S_M)$ and associated continuous functional $\psi \in \mathscr{H}_\pm \mapsto (\overline{\mathcal{D}} \psi, \eta)_2$, there exists $\varphi \in \mathscr{H}_\pm$ such that
	\[
	(\overline{\mathcal{D}} \psi, \overline{\mathcal{D}} \varphi - \eta)_2 = 0 \qquad \forall \, \psi \in \mathscr{H}_\pm,
	\]  
namely, $\Psi : = \overline{\mathcal{D}} \varphi - \eta \in L^2(\mathbb S_M)$ weakly solves the adjoint Dirac equation $(\overline{\mathcal{D}} \psi, \Psi)_2= 0$ for every $\psi \in \mathscr{H}_\pm$.  The identity in the proof of Proposition \ref{sefaddir}, that can be rewritten as 
	\[
	\begin{array}{lcl}
	\disp (\overline{\mathcal{D}} \psi, \xi)_2 - (\psi, \overline{\mathcal{D}} \xi)_2 & = & \disp \int_\Sigma \langle \varrho \cdot \psi, \xi \rangle d\Sigma \\[0.4cm]
	& = & \disp \frac{1}{2} \int_\Sigma \big[ \langle \varrho\cdot \psi, \mathscr{K}_- \xi \rangle - \langle \varrho\cdot\mathscr{K}_+ \psi, \xi \rangle\big] d\Sigma 	\qquad \forall \, \psi,\xi \in \Gamma_c(\mathbb S_M),
	\end{array}
	\]	
shows that the formal adjoint of $(\overline{\mathcal{D}}, \mathscr{K}_\pm)$ is $(\overline{\mathcal{D}}, \mathscr{K}_\mp)$. In particular,	
	\[
	\overline{\mathcal{D}} \Psi = 0 \ \  \text{ in } \, L^2_\loc(\mathbb S_M)\qquad
	\text{and}\qquad \mathscr{K}_\mp \Psi = 0 \ \ \text{ in } \, H^{\frac{1}{2}}_\loc(\mathbb{S}_M|_\Sigma). 
	\]
Let $\{\eta_k\}$ be the cut-offs in \eqref{def_etaR} with indices $R = k \in \mathbb N$, and define $\Psi_k = \eta_k \Psi \in H^1_c(\mathbb S_M) \cap \mathscr{H}_\mp$. By \eqref{eq_coerci},
	\[
	\begin{array}{lcl}
	\disp C'\|\Psi_{k}-\Psi_j\|^2_{\mathscr{H}} & \le & \disp \|\overline{\mathcal{D}}(\Psi_k-\Psi_j)\|^2_2 = C\int_M \big[|\nabla \eta_k|^2|\Psi|^2 + |\nabla \eta_j|^2|\Psi|^2\big] dM \\[0.4cm]
	& \to & 0 \qquad \text{as } \, j,k \to \infty,  	
	\end{array}
	\]
thus $\{\Psi_k\}$ is a Cauchy hence convergent sequence in $\mathscr{H}$. Concluding, $\Psi \in \mathscr{H}_\mp$, hence $\Psi \in \ker (\overline{\mathcal{D}}, \mathscr{K}_\mp) = 0$, so that $\varphi$ solves \eqref{eq:phi}.	In view of the higher regularity estimates in \cite{bartnikchrusciel} (cf. Theorems 3.8, 6.6 and Remark 6.7 therein), $\varphi \in H^{2}_\loc(\mathbb{S}_M)$ whenever $\eta \in H^1_\loc(\mathbb S_M)$.
\end{proof}

We proceed by choosing a non-trivial {\em parallel} spinor $\phi\in \Gamma(\mathbb S_{\mathbb R^n_{+,r_0}})$ satisfying $i \partial_{x_n} \cdot \phi = \pm \phi$, and transplant it to a spinor $\hat \phi\in\mathbb{S}_{M_{\rm{ext}}}$  satisfying (\ref{mitcond}). We extend $\hat \phi$ to the rest of $\Sigma$ so that the boundary condition holds everywhere, and finally extend $\hat \phi$ to the rest of $M$ in an arbitrary manner.
It follows from 
(\ref{eq_nablaintrinsic}) and  (\ref{spingauss}) that  
\[
\overline\nabla_{e_i}\hat \phi=\partial_{x_i} \hat \phi+\frac{1}{4}\Gamma_{ij}^k \partial_{x_j}\cdot \partial_{x_k}\cdot \hat \phi-\frac{1}{2}h_{ij}e_0\cdot e_j\cdot \hat\phi.
\]
Since $\partial_{x_i} \hat\phi=0$, by (\ref{orthon}) we then have
$\overline\nabla_{e_i} \hat\phi=O(|x|^{-\tau-1})$,
that is, $\overline\nabla \hat \phi\in L^2(\mathbb S_M)$. By Proposition \ref{surjdir}
we can find a spinor $\varphi\in \mathscr{H} \cap H^2_\loc(M)$ such that $\overline{\mathcal D}\varphi=-\overline{\mathcal D} \hat\phi$ and satisfying (\ref{mitcond}) along $\Sigma$. We define
\begin{equation}\label{def:psi}
\psi= \hat \phi+\varphi\in H^2_\loc(\mathbb S_M).
\end{equation}
Thus, $\psi$ is harmonic ($\overline{\mathcal D}\psi=0$), satisfies (\ref{mitcond}) along $\Sigma$, and asymptotes $\hat \phi$ at infinity in the sense $\psi- \hat\phi\in \mathscr{H}$.

The next result gives a nice extension of Witten's celebrated formula for the energy-momentum vector of an asymptotically flat initial data set in the presence of a non-compact boundary. More precisely, it is the spacetime version of \cite[Theorem 5.2]{ABdL}.  

\begin{theorem}\label{genwit}
	If the asymptotically flat initial data set $(M,g,h,\Sigma)$ satifies the DECs (\ref{intdec2}) and (\ref{bddectan2}) and 
	 $\psi$ is the harmonic spinor in (\ref{def:psi}) then 
	\begin{eqnarray}\label{genwit2} 
	\frac{1}{4}\left(E|\phi|^2-\langle \phi,P_A \partial_{x_0}\cdot \partial_{x_A}\cdot\phi\rangle \right) 
	& = & \int_{M}(|\overline\nabla\psi|^2+\langle \mathcal R\psi,\psi\rangle)d{\rm vol}\nonumber\\
	& & +\frac{1}{2}\int_{\Sigma}\langle \left(H_g+ \mathcal U\right)\psi,\psi\rangle d\Sigma.  	
	\end{eqnarray}
	\end{theorem}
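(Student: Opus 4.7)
\medskip

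\textbf{Plan of proof.} The strategy is to apply the integration-by-parts identity of Proposition \ref{intpart11} to the harmonic spinor $\psi = \hat\phi + \varphi$ constructed in \eqref{def:psi}, and then extract the energy-momentum by taking the limit as $r\to\infty$. Since $\overline{\mathcal D}\psi=0$ and $\psi$ satisfies the MIT bag condition on $\Sigma$, Proposition \ref{intpart11} yields
\begin{equation*}
\int_{\Omega_r}\bigl(|\overline\nabla\psi|^2+\langle \mathcal R\psi,\psi\rangle\bigr)\,dM
= \int_{S^{n-1}_{r,+}}\langle\overline\nabla_{e_i}\psi,\psi\rangle\,e_i\righthalfcup dM
-\frac{1}{2}\int_{\Sigma_r}\langle(H_g+\mathcal U)\psi,\psi\rangle\,d\Sigma.
\end{equation*}
My first task is to justify passing to the limit $r\to\infty$ on the interior and on the $\Sigma_r$-integrals. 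For the former, I need $\overline\nabla\psi\in L^2$; this follows from $\overline\nabla\varphi\in L^2$ (since $\varphi\in\mathscr{H}$) and $\overline\nabla\hat\phi=O(|x|^{-\tau-1})$, which is $L^2$ exactly under the decay $\tau>(n-2)/2$. For the $\Sigma_r$-integral, control of $(H_g+\mathcal U)\in L^1(\Sigma)$ via the integrability hypothesis \eqref{intcond} together with the pointwise bound $|\psi|^2\le 2|\hat\phi|^2+2|\varphi|^2$ and the weighted Poincar\'e inequality \eqref{eq_wp_global} is what I would use.

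The core of the proof is the identification
\begin{equation}\label{keylimit}
\lim_{r\to\infty}\int_{S^{n-1}_{r,+}}\langle\overline\nabla_{e_i}\psi,\psi\rangle\,e_i\righthalfcup dM
= \frac{1}{4}\Bigl(E|\phi|^2-\langle\phi,P_A\,\partial_{x_0}\!\cdot\partial_{x_A}\!\cdot\phi\rangle\Bigr).
\end{equation}
First, I would show that cross terms $\langle\overline\nabla_{e_i}\varphi,\hat\phi\rangle$, $\langle\overline\nabla_{e_i}\hat\phi,\varphi\rangle$, and the $\langle\overline\nabla\varphi,\varphi\rangle$ flux all vanish in the limit: this is the standard Witten-type density argument, using Cauchy-Schwarz, the Hardy-type inequality behind the norm $\|\cdot\|_{\mathscr H}$, and the fact that $\varphi$ has ``finite energy'' while $\overline\nabla\hat\phi$ decays like $|x|^{-\tau-1}$ in the $L^2$-sense. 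So only the pure $\hat\phi$-flux survives. Using the spinorial Gauss formula \eqref{spingauss} and \eqref{eq_nablaintrinsic}, in the asymptotic coordinates
\begin{equation*}
\overline\nabla_{e_i}\hat\phi=\tfrac{1}{4}\Gamma_{ij}^{k}\,e_j\!\cdot e_k\!\cdot\hat\phi-\tfrac{1}{2}h_{ij}\,e_0\!\cdot e_j\!\cdot\hat\phi+o(|x|^{-\tau-1}),
\end{equation*}
where the first piece is the intrinsic Levi--Civita spin connection acting on a transplanted parallel spinor. Taking the $\langle\cdot,\hat\phi\rangle$-inner product and using the Clifford relations together with the standard identity $\langle e_j\!\cdot e_k\!\cdot\phi,\phi\rangle=-\delta_{jk}|\phi|^2$ for parallel constant-coefficient spinors, the connection term contributes (to leading order in $f=g-\delta$) a multiple of $(\mathrm{div}_\delta f-d\,\mathrm{tr}_\delta f)(\mu)|\phi|^2$, exactly as in the classical Witten calculation, while the $h$-term produces $-\tfrac{1}{2}h_{ij}\mu_i\langle e_0\!\cdot e_j\!\cdot\hat\phi,\hat\phi\rangle$.

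Two new features, compared with the boundaryless setting, require care. First, since the integration is on the open hemisphere $S^{n-1}_{r,+}$ rather than on a full sphere, a Stokes-type rearrangement of the connection term produces a genuine boundary contribution on $S^{n-2}_r$; I expect this to reproduce precisely the correction $\int_{S^{n-2}_r}f(\partial_{x_n},\vartheta)\,dS^{n-2}_r$ entering the definition \eqref{enerdef} of $E$, so that the full energy (and not just the hemisphere integral) is recovered. Second, in the momentum piece $-\tfrac{1}{2}h_{ij}\mu_i\langle e_0\!\cdot e_j\!\cdot\hat\phi,\hat\phi\rangle$ one has to pass from $h$ to $\pi=h-(\mathrm{tr}_g h)g$, which is most easily done by splitting the index $j$ into a tangential part $A$ and a normal part $n$: the $A$-components, via the pairing $\langle e_0\!\cdot e_A\!\cdot\hat\phi,\hat\phi\rangle$ on a constant-coefficient asymptotic parallel spinor, yield the expected $-\tfrac{1}{4}P_A\langle\phi,\partial_{x_0}\!\cdot\partial_{x_A}\!\cdot\phi\rangle$ after verifying that the pure trace $(\mathrm{tr}_g h)g$ contribution is absorbed (using that the transplanted parallel spinor has $|\hat\phi|$ asymptotically constant). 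The $j=n$ contributions and any extra terms involving $h_{nn}$ must either cancel or be shown to decay via the MIT bag identity $(\varrho\!\cdot\hat\phi,\hat\phi)=0$ along the asymptotic portion of $\Sigma$ (Remark \ref{rem_MIT}) and the fact that no $P_n$-type charge appears in the definition of the linear momentum. The main obstacle is this last bookkeeping: making precise the cancellation of the normal $h_{in}$-terms against the $S^{n-2}_r$ boundary correction arising from the spin connection, so as to reconstruct \emph{exactly} the functional on $\mathbb{R}\oplus\mathfrak K_\delta^+$ from Definition \ref{massdef}. Once \eqref{keylimit} is established, combining it with the $r\to\infty$ limit of the integration-by-parts identity above yields \eqref{genwit2}.
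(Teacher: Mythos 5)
Your overall strategy coincides with the paper's: apply the integration-by-parts identity of Proposition \ref{intpart11} to the harmonic MIT-bag spinor $\psi=\hat\phi+\varphi$ and evaluate the asymptotic flux over $S^{n-1}_{r,+}$. The limit identification you label as the core, together with the vanishing of the cross terms involving $\varphi$, is indeed how the proof goes (with the caveat, noted in the paper, that $\psi-\hat\phi\in\mathscr H$ only yields a \emph{sequential} limit along some $r_j\to\infty$, which suffices). However, the step you yourself flag as ``the main obstacle'' --- converting the raw $h$-flux into the $\pi$-flux and disposing of the normal components --- is a genuine gap, and the mechanism you propose for it would fail: there is no cancellation of normal $h_{in}$-terms against the $S^{n-2}_r$ boundary correction. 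That correction belongs entirely to the \emph{intrinsic} energy computation and is already part of the definition \eqref{enerdef} of $E$ recovered from \cite[Section 5.2]{ABdL}; it plays no role in the momentum bookkeeping.

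The paper's resolution is purely algebraic and happens \emph{before} any asymptotic analysis: Proposition \ref{decomp} gives the operator identity $\overline\nabla_{e_i}+e_i\cdot\overline{\mathcal D}=\nabla_{e_i}+e_i\cdot{\mathcal D}-\frac{1}{2}\pi_{ij}e_0\cdot e_j\cdot$, where the trace correction turning $h$ into $\pi$ comes from the Clifford contraction of $e_i\cdot(\overline{\mathcal D}-\mathcal D)$ --- not, as you suggest, from $|\hat\phi|$ being asymptotically constant. Since $\overline{\mathcal D}\psi=0$, the hemisphere flux splits exactly into the intrinsic flux $\langle(\nabla_{e_i}+e_i\cdot\mathcal D)\psi,\psi\rangle$, whose limit is $\frac14E|\phi|^2$ ($S^{n-2}_r$ term included), plus the $\pi$-flux $-\frac12\pi_{ij}\langle e_0\cdot e_i\cdot\psi,\psi\rangle e_j\righthalfcup dM$. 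In the latter, the $i=n$ component vanishes because the constant spinor $\phi$ satisfies $i\partial_{x_n}\cdot\phi=\pm\phi$ as an algebraic identity on \emph{all} of $\mathbb R^n_{+}$, so $\langle\partial_{x_0}\cdot\partial_{x_n}\cdot\phi,\phi\rangle=(\partial_{x_n}\cdot\phi,\phi)=0$ on the entire hemisphere --- not merely ``along the asymptotic portion of $\Sigma$'' as you write; the identity restricted to $S^{n-2}_r$ alone would not suffice, since the offending term lives on all of $S^{n-1}_{r,+}$. With that, only the tangential components $\pi(\partial_{x_A},\mu)$ survive and yield $\frac12\langle P_A\partial_{x_0}\cdot\partial_{x_A}\cdot\phi,\phi\rangle$ directly from \eqref{momdef}. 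I recommend you replace your hand expansion of $\overline\nabla_{e_i}\hat\phi$ by this decomposition; it eliminates the trace and normal-component bookkeeping in one stroke.
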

\begin{proof}
From  (\ref{decom2}) and (\ref{intpartf0}) 
we get
	\begin{eqnarray*}
\int_{\Omega_r}(|\overline\nabla\psi|^2+\langle \mathcal R\psi,\psi\rangle)dM & = & \int_{S^{n-1}_{r,+}}\langle(\nabla_{e_i}+e_i\cdot{\mathcal D}) \psi,\psi\rangle e_i\righthalfcup dM\\ 
& & \quad -\frac{1}{2}\int_{S^{n-1}_{r,+}}\pi_{ij}\langle e_0\cdot e_i\cdot\psi,\psi\rangle e_j\righthalfcup dM\\
& & \quad\quad  -\frac{1}{2}\int_{\Sigma_r}\langle (H_g+\mathcal U)\psi,\psi\rangle d\Sigma.
\end{eqnarray*}	
First, the computation in \cite[Section 5.2]{ABdL} shows that 
\[
\lim_{r\to +\infty}\int_{S^{n-1}_{r,+}}\langle(\nabla_{e_i}+e_i\cdot{\mathcal D}) \psi,\psi\rangle e_i\righthalfcup dM=\frac{1}{4}E|\phi|^2.
\]
Also, 
\begin{equation}\label{eq_bordo1}
\begin{array}{lcl}
\disp \lim_{r\to +\infty} \int_{S^{n-1}_{r,+}}\pi_{ij}\langle e_0\cdot e_i\cdot\psi,\psi\rangle e_j\righthalfcup dM 
& = & \disp \lim_{r\to +\infty} \int_{S^{n-1}_{r,+}}\pi_{ij}\langle e_0\cdot e_i\cdot \hat \phi, \hat \phi\rangle e_j\righthalfcup dM \\
&= & \disp \lim_{r\to +\infty} \int_{S^{n-1}_{r,+}}\pi(\partial_{x_i},\mu)\langle \partial_{x_0} \cdot \partial_{x_i} \cdot \phi, \phi\rangle  dS^{n-1}_{r,+}\\
& = & \disp \lim_{r\to +\infty} \int_{S^{n-1}_{r,+}}\pi(\partial_{x_A},\mu)\langle \partial_{x_0}\cdot \partial_{x_A }\cdot \phi, \phi\rangle  dS^{n-1}_{r,+}\\
& = & \disp \frac{1}{2}\langle P_A\partial_{x_0}\cdot \partial_{x_A}\cdot\phi,\phi\rangle,
\end{array}
\end{equation}
where we used (\ref{momdef}) together with the fact that, by Remark \ref{rem_MIT} and since $\phi\in\Gamma(\mathbb S_{\mathbb R^n_{+,r_0}})$ is constant, $\langle \partial_{x_0} \cdot \partial_{x_n} \cdot\phi,\phi\rangle=0$ on the entire $\mathbb R^n_+$. 
\end{proof}

\begin{remark}
 The argument leading to the first identity in \eqref{eq_bordo1} needs a little justification, since a-priori  $|\psi - \hat \phi| \in \mathscr{H}$, in particular we only know that
	\[
	\lim_{j \to \infty} r_j \int_{S^{n-1}_{r_j,+}} \frac{|\psi - \hat \phi|^2}{r_j^2} dS^{n-1}_{r_j,+} = 0
	\]
for some diverging sequence $\{r_j\}$. Using 
	\[
	\big|\langle e_0\cdot e_i\cdot\psi,\psi\rangle - \langle e_0\cdot e_i\cdot \hat \phi, \hat \phi\rangle \big| \le \disp \big[|\psi| + |\hat \phi|\big]|\psi - \hat \phi| \le \disp \big[2|\hat\phi| + |\psi - \hat\phi| \big] |\psi - \hat\phi| \,,
	\]
we readily deduce that 
	\[
	\begin{array}{l}
	\disp \left| \int_{S^{n-1}_{r_j,+}}\pi_{ij} \big[ \langle e_0\cdot e_i\cdot\psi,\psi\rangle - \langle e_0\cdot e_i\cdot\hat\phi,\hat\phi\rangle \big] e_j\righthalfcup dM\right| \\[0.5cm]
\disp \quad \le C r_j^{-\tau-1} \int_{S^{n-1}_{r_j,+}} \Big[ 2 \|\hat\phi\|_\infty |\psi-\hat \phi| + |\psi-\hat \phi|^2\Big] dS^{n-1}_{r_j,+} \\[0.5cm]
\disp \quad \le C r_j^{-\tau-1} \left\{ 2\|\hat\phi\|_\infty \sqrt{|S^{n-1}_{r_j,+}|r_j} \left( \int_{S^{n-1}_{r_j,+}} \frac{|\psi-\hat\phi|^2}{r_j} dS^{n-1}_{r_j,+}\right)^{\frac{1}{2}} + r_j \int_{S^{n-1}_{r_j,+}} \frac{|\psi-\hat \phi|^2}{r_j} dS^{n-1}_{r_j,+} \right\} \to 0 
	\end{array}
	\]
as $j \to \infty$, where we used the full strength of the inequality $\tau > (n-2)/2$. This is enough to conclude that the claimed equality holds if the limit is evaluated along the sequence $\{r_j\}$. In particular, each of the limits in \eqref{eq_bordo1} should be read as a sequential limit along $\{r_j\}$, which nevertheless suffices to guarantee \eqref{genwit2}.  
\end{remark}

In order to complete the proof of  Theorem \ref{main}, we make one last assumption on the parallel spinor $\phi$ used in the construction of $\psi$ in (\ref{def:psi}). As in Remark \ref{decbdr}, one checks that the operator $\mathcal T=P_A \partial_{x_0}\cdot \partial_{x_A}\cdot$ has $\pm|P|$ as eingenvalues. Also, it satisfies $\mathcal T(i\partial_{x_n})=(i\partial_{x_n})\mathcal T$. In particular, $\mathcal T$ and $i\partial_{x_n} \cdot$ are simultaneously diagonalizable. Thus we may choose $\phi$ constant (parallel) in $\R^n_{+,r_0}$ satisfying both $\mathcal T\phi=|P|\phi$ and one of the MIT bag boundary conditions $i\partial_{x_n} \cdot \phi=\pm\phi$.
Using this $\phi$ in (\ref{genwit2}) we get 
\begin{equation}\label{eq_important}
\frac{1}{4}\left(E-|P|\right)|\phi|^2 = \disp \int_{M}(|\overline\nabla\psi|^2+\langle \mathcal R\psi,\psi\rangle)dM + \frac{1}{2}\int_{\Sigma}\langle \left(H_g+ \mathcal U\right)\psi,\psi\rangle d\Sigma.  	
\end{equation}
Since the right-hand side is nonnegative by Remarks \ref{claim} and \ref{decbdr}, we obtain the mass inequality 
\begin{equation}\label{mass:ineq}
E\geq|P|.
\end{equation}
Suppose next that $E = |P|$, so there exists $\psi \in H^2_\loc(\mathbb S_M)$ satisfying 
\begin{equation}\label{proprie_psipar}
\begin{array}{ll}
\disp \overline{\nabla} \psi = 0, \quad \langle \mathcal R\psi,\psi\rangle \equiv 0 & \qquad \text{on } M, \\[0.2cm]
i e_n \cdot \psi = \pm \psi, \quad \langle (H_g + \mathcal{U})\psi, \psi \rangle = 0, \quad (e_n \cdot \psi, \psi) = 0  & \qquad \text{on } \, \Sigma,
\end{array}
\end{equation}
and that \eqref{decay_rhoJ_forrigid} is in force. By bootstrapping, $\overline{\nabla} \psi = 0$ readily implies that $\psi \in C^{2,\alpha}_\loc (\mathbb S_M)$. Our first goal is to prove that $E = |P| = 0$. To do so, we avail of the following identity in \cite{dLGM}:
\begin{equation}\label{energy_LGM}
E = d_n \cdot \lim_{r \to \infty} \left[ \int_{\mathbb S^{n-1}_{r,+}} G( r \partial_r, \mu) d \mathbb{S}^{n-1}_{r,+} - \int_{\mathbb{S}^{n-2}_r} N(r \partial_r, \vartheta) d \mathbb{S}^{n-2}_r\right],
\end{equation}
where $G$ is the Einstein tensor of $M$, $N$ is the first Newton tensor of $\Sigma \hookrightarrow M$ in the direction $\varrho= e_n$, and $d_n = \frac{2}{2-n}$.
We consider the lapse-shift pair associated to $\psi:$
$$
V = \langle \psi,\psi\rangle, \qquad W = \sum_{j=1}^n(e_j \cdot \psi, \psi)e_j = W^j e_j. 
$$
In a frame $\{e_i\}$ that is $\nabla$-normal at a given point,
\begin{equation}\label{ar_notbad}
\begin{array}{rlcl}
(i) & V_i & = & \disp e_i( e_0 \cdot \psi, \psi) = ( \overline{\nabla}_{e_i} e_0 \cdot \psi, \psi) = h_{ik}(e_k \cdot \psi, \psi) = h_{ik} W^k \\[0.3cm]
(ii) & \disp \nabla_{e_i} W & = & \disp e_i(e_j \cdot \psi, \psi)e_j = (\overline{\nabla}_{e_i} e_j \cdot \psi, \psi)e_j = h_{ij}(e_0 \cdot \psi, \psi)e_j = h_{ij} V e_j,
\end{array}
\end{equation}
so 
\begin{equation}\label{eq_VW}
\mathscr{L}_W g - 2Vh = 0, \qquad d(V^2-|W|^2) = 0 \qquad \text{on } \, M.
\end{equation}
Furthermore, $W^n = (e_n \cdot \psi, \psi) = 0$.\\[0.2cm] 
\noindent \textbf{Claim 1: } We have $V^2 \ge |W|^2$.\\[0.2cm] 
\emph{Proof:} The claim is obvious at points where $\psi = 0$. On the other hand, if $|\psi|^2 > 0$, then $\{e_j \cdot \psi\}$ is an orthogonal set of spinors for $\langle \, , \, \rangle$, with $|e_j \cdot \psi|^2 = V$ for every $j$ (in fact, $\mathfrak{Re} \langle e_i \cdot \eta, e_j \cdot \eta \rangle = \delta_{ij} |\eta|^2,\,\forall \, \eta \in \Gamma(\mathbb{S}_M)$). Thus
\begin{equation}\label{eq_causal}
|W|^2 = \sum_j (e_j \cdot \psi, \psi)^2 = \sum_j \langle e_j \cdot \psi, e_0 \cdot \psi \rangle^2 \le V|e_0 \cdot \psi|^2 = V^2.
\end{equation}
\noindent \textbf{Claim 2:} If $E=|P|$, then $V>0$ on $M$ and 
	\[
	\Sigma \hookrightarrow M \quad \text{is totally geodesic,} \qquad \pi_{An}=0 \quad \text{on } \, \Sigma, \ \ \forall \, A \in \{1, \ldots, n-1\}.
	\]
Consequently, $e_n(V) =0$ on $\Sigma$.\\[0.2cm]
\emph{Proof: } We use \eqref{ar_notbad} and \eqref{eq_causal} to compute
	\begin{equation}\label{thekey}
	\begin{array}{lcl}
	\Delta V & = & \disp (h_{ik}W^k)_i = |h|^2 V + h_{ik,i}W^k \\[0.2cm]
	& \le & \disp |h|^2 V + |\nabla h| |W| \le \big(|h|^2 + |\nabla h|\big) V \qquad \text{on } \, M.
	\end{array}
	\end{equation}
By the strong maximum principle, we deduce that either $V \equiv 0$ or $V>0$ on $\mathrm{Int}\,M$. The first case cannot occur, because otherwise $\psi \equiv 0$ and thus $\psi - \hat \phi \not\in \mathscr{H}$. Next, we differentiate the equality $i e_n \cdot \psi = \pm \psi$ and use $\overline{\nabla} \psi =0$ to get
\begin{equation}\label{eq_rig_boundary}
\begin{array}{lcl}
0 & = & \disp \overline{\nabla}_{e_A} (e_n \cdot \psi) = (\overline{\nabla}_{e_A} e_n) \cdot \psi \\[0.2cm]
& = & \disp -b_{AB} e_B \cdot \psi - \bar g(\overline\nabla_{e_A} e_n,e_0) e_0 \cdot \psi = (-b_{AB}e_B + \pi_{An}e_0) \cdot \psi.
\end{array}
\end{equation}
where $b_{AB}$ is the second fundamental form of $\Sigma \hookrightarrow M$ in the inward direction $e_n$ (so $H_g = \mathrm{tr}_g b$). Making the product $\metric$ with, respectively, $e_B \cdot \psi$ and $e_0 \cdot \psi$, we obtain the system
	\begin{equation}\label{boundary_1}
	\left\{ \begin{array}{ll}
	(i) & -b_{AB} V + \pi_{An}W^B = 0 \qquad \forall \, A,B \\[0.2cm]
	(ii) & -b_{AB} W^B + \pi_{An}V = 0 \qquad \forall \, A.
	\end{array}\right.
	\end{equation}
Suppose that $V(x) =0$ for some $x \in \Sigma$. From $(i)$, we then deduce $\pi_{An}W^A = 0$ at $x$, and in view of the identity $W^n=0$ on $\Sigma$ we infer 
	\[
	e_n(V) = h_{nj}W^j = \pi_{nA}W^A = 0 \qquad \text{at } \, x,
	\]
contradicting the Hopf maximum principle. This shows that $V>0$ on $\Sigma$, hence on the entire $M$. Differentiating the pointwise inequality $\langle (H_g+\mathcal{U}) (\psi+ t\eta),\psi + t\eta)\rangle \ge 0$ at $t=0$, because of \eqref{proprie_psipar} we get $\mathfrak{Re} \langle (H_g + \mathcal{U})\psi,\eta \rangle \equiv 0$ on $\Sigma$ for each $\eta \in C^2_\loc (\mathbb S_M)$. Using as a test spinor, respectively, $\eta = \psi$ and $\eta = e_0 \cdot e_B \cdot \psi$ we get	
	\begin{equation}\label{boundary_2}
	\left\{ \begin{array}{ll}
	(iii) & 0 = \mathfrak{Re} \langle (H_g + \mathcal{U})\psi, \psi \rangle = H_g V + \pi_{An}W^A \\[0.2cm]
	(iv) & 0 = \mathfrak{Re} \langle (H_g + \mathcal{U})\psi, e_0 \cdot e_B \cdot \psi \rangle = H_g W^B + \pi_{Bn} V.
	\end{array}\right.
	\end{equation}
Tracing $(i)$ in $A,B$ and comparing with $(iii)$ we deduce $H_g V = 0$, hence $H_g=0$ on $\Sigma$. Plugging into $(iv)$ gives $\pi_{Bn} =0$, in particular $e_n(V) = \pi_{Bn}W^B =0$. By $(i)$, we conclude $b_{AB}=0$ on $\Sigma$.  \\[0.2cm]
\noindent \textbf{Claim 3: } If $E=|P|$, then
\begin{equation}\label{claim_1} 
\rho_0 V = J(W), \qquad \rho_0 W = V J^\sharp, \qquad \rho_0 = |J| \qquad \text{on } \, M. 
\end{equation}
Furthermore, $V^2 \equiv |W|^2$ unless $\rho_0 = |J| \equiv 0$ on $M$.\\[0.2cm]
\emph{Proof: } We differentiate the pointwise inequality $\langle \mathcal R (\psi+ t\eta),\psi + t\eta)\rangle \ge 0$ at $t=0$ and use the second identity in \eqref{proprie_psipar} to deduce $\mathfrak{Re} \langle \mathcal R \psi,\eta \rangle \equiv 0$ on $M$ for each $\eta \in C^2_\loc (\mathbb S_M)$. Using as a test spinor, respectively, $\eta = \psi$ and $\eta = e_0 \cdot e_k \cdot \psi$ we get
$$
\begin{array}{lcl}
0 & = & \disp 2 \mathfrak{Re} \langle \mathcal R \psi, \psi \rangle = \disp \rho_0 V + J_i \mathfrak{Re} \langle e_i \cdot e_0 \cdot \psi, \psi \rangle = \rho_0 V - J_iW^i \\[0.2cm]
0 & = & \disp 2 \mathfrak{Re} \langle \mathcal R \psi, e_0 \cdot e_k \cdot \psi \rangle = \disp \rho_0 (\psi, e_k \cdot \psi) + J_i \mathfrak{Re} \langle e_i \cdot e_0 \cdot \psi, e_0 \cdot e_k \cdot \psi \rangle \\[0.2cm]
& = & \disp \rho_0 W_k - J_i \mathfrak{Re} \langle e_i \cdot \psi, e_k \cdot \psi \rangle = \rho_0 W_k - J_k V, 
\end{array}
$$
proving the first two identities in \eqref{claim_1}. Multiplying the second one in \eqref{claim_1} by $J^\sharp$ we deduce that
\[
\disp \rho_0^2 V = \rho_0 J(W) = V |J|^2,
\]
and $\rho_0 = |J|$ follows since $V>0$ on $M$. Similarly, multiplying that identity by $W$ we get $\rho_0 |W|^2 = VJ(W) = \rho_0 V^2$. If $\rho_0 \not \equiv 0$, then $V^2 \equiv |W|^2$ on the entire $M$, in view of the constancy of $V^2-|W|^2$. \\[0.2cm]
Let $(M',g)$ consists of two copies of $M$ glued along the totally geodesic boundary $\Sigma$. Then, $M'$ is asymptotically flat and the metric $g$ extended by reflection across $\Sigma$ is $C^{2,1}_\loc$. \\[0.2cm]
\noindent \textbf{Claim 4: } If $E = |P|$, then $g -\delta \in C^{2,\alpha}_{-\tau}(M')$ and $h \in C^{1,\alpha}_{-\tau-1}(M')$, where $h_{ij}$ is extended by reflection along $\widehat\Sigma$.\\[0.2cm]

\noindent \emph{Proof. } We consider the manifold 
$$
\widehat M = M \times \R
$$
endowed with the following \emph{Riemannian} metric $\widehat g$. Let $\{x_i\}$ be local smooth coordinates around a point $p \in \widehat M$. We define
\begin{equation}\label{eq_riemKilling}
\widehat g = V^2 du^2 + g_{ij} (dx^i + W^i d u) \otimes ( dx^j + W^j du),
\end{equation}
where $W = W^i \partial_{x_j}$. 
Hereafter, the functions $g_{ij}$ and $W^i$
are meant to be functions on $\widehat M$ by pre-composing with the projection $\widehat M \to M$ onto the first factor. 
The manifold $(\widehat M,\widehat g)$ is a Riemannian analogue of the Killing development of $(M,g)$, described in \cite{CM} and recalled in the Appendix.
Direct calculations show that 
$$
\partial_\varrho g(\partial_u, \partial_u)
=\partial_\varrho g(\partial_u, \partial_{x_A})
=\partial_\varrho g(\partial_{x_B}, \partial_{x_A})=0,
\qquad\text{on}\:\:\widehat\Sigma:=\Sigma\times\mathbb R, 
$$
which implies that $\widehat\Sigma$ is totally geodesic with respect to $\widehat g$.

We consider again the double $(\widehat M', \widehat g)$ of $(\widehat M, \widehat g)$ along $\widehat\Sigma$. Here, $\widehat g$ extends by reflection across $\widehat\Sigma$ with regularity $\widehat g\in C^{2,1}_{loc}(\widehat M')$ as $\widehat\Sigma$ is totally geodesic. As in its Lorentzian counterpart, the second fundamental form of each slice $\{u={\rm {const}}\}\subset\widehat M'$ is given by $h_{ij}$, so we conclude that $h_{ij}\in C^{1,\alpha}_{-\tau-1}(M')$, thus proving the claim.

\bigskip
We now conclude that $E = |P| = 0$. First, using Claims $2$ and $3$, and since $\rho_0$ and $J$ are clearly continuous across $\Sigma$, we get $\rho_0, J \in C^{0,\alpha}$ across $\Sigma$. Consequently, by \eqref{decay_rhoJ_forrigid},
	\[
	\rho_0, J \in C^{0,\alpha}_{-n-\varepsilon}(M').
	\]
The data $(g,h,\rho_0,J)$ on $M'$ therefore satisfies the requirements to apply the rigidity result in \cite[Theorem 2]{HL}. Denote with $(E',P')$ the energy-momentum vector of $M'$. From \eqref{energy_LGM}, the fact that $\Sigma$ is totally geodesic, and the identity corresponding to \eqref{energy_LGM} in the boundaryless case (due to \cite{AH, Hu, MT, He2}), we get
\begin{equation}\label{energy_LGM_semborda}
E = d_n \cdot \lim_{r \to \infty} \int_{\mathbb S^{n-1}_{r,+}} G( r \partial_r, \mu) d \mathbb{S}^{n-1}_{r,+} = \frac{1}{2}E'.
\end{equation}
On the other hand, by symmetry, the component $P'_n$ of the momentum vector of $M'$ satisfies
	\[
	P_n' = 2 \lim_{r \to \infty} \int_{\mathbb S^{n-1}_{r}} \pi(\partial_{x_n}, \mu) d \mathbb{S}^{n-1}_{r} = 0,
	\]
and evidently $P_A' = 2 P_A$ for $1 \le A \le n-1$. Hence, $E' = |P'|$, and \cite[Theorem 2]{HL} guarantees the vanishing of $E'$ and $|P'|$.\\
Having shown that $E = |P| = 0$, choose a basis of parallel spinors $\{\phi_m\}$ for $\mathbb S_{\R^n_+}$, with each $\phi_m$ satisfying (\ref{mitcond}). From \eqref{genwit2}, the corresponding harmonic spinors $\psi_m \in \Gamma(\mathbb S_M)$ satisfy
	\begin{eqnarray*}
	\int_{M}(|\overline\nabla\psi_m|^2+\langle \mathcal R\psi_m,\psi_m\rangle)d M \nonumber +\frac{1}{2}\int_{\Sigma}\langle \left(H_g+ \mathcal U\right)\psi_m,\psi_m\rangle d\Sigma = 0 \qquad \forall \, m,	
\end{eqnarray*}
and are therefore parallel, in particular pointwise linearly independent everywhere. Combining this with
\[
\begin{array}{lcl}
0 & = & \overline{R}_{e_i,e_j}\psi_m := \disp \left(\overline\nabla_{e_i}\overline\nabla_{e_j}-\overline\nabla_{e_j}\overline\nabla_{e_i}-\overline\nabla_{[e_i,e_j]}\right)\psi_m \\[0.2cm]
& = & \disp \left\{ - \frac{1}{4} \Big[ R_{ijks} + h_{ik}h_{js}-h_{is}h_{jk} \Big]e_k \cdot e_s \cdot + \frac{1}{2}[h_{kj,i}-h_{ki,j}] e_k \cdot e_0 \cdot \right\} \psi_m,
\end{array}
\]  
we deduce 
\[
\overline{R}_{ijks} : = R_{ijks} + h_{ik}h_{js}-h_{is}h_{jk} \equiv 0, \qquad  \overline{R}_{k0ij} : = h_{kj,i}-h_{ki,j} \equiv 0.
\]
From the above two identities it readily follows that the Killing development $\bar M$ of $M$ (as defined in \cite{BC,CM}, see also the Appendix) is flat, and that $\partial \bar M \hookrightarrow \bar M$ is totally geodesic. Moreover, $M$ meets $\partial \bar M$ orthogonally along $\Sigma$. Since $M$ is asymptotically flat, the only possibility is that $\bar M = \mathbb{L}^{n,1}_+$. This completes the proof of Theorem \ref{main}.

\begin{remark}\label{disc:bound:cond}
	Nowhere in the argument leading to Theorem \ref{main} we assume that the noncompact boundary is connected, so we can allow for  finitely many {\em compact} components as part of $\Sigma$ without compromising the final result, including the rigidity statement. We may additionally envisage a situation in which an extra finite family of {\em inner} compact boundary components, say $\{\Gamma_k\}_{k=1}^l$, should be viewed as (past or future) trapped hypersurfaces. Along these boundary components we must instead impose chirality boundary conditions (as in Definition \ref{chirbd} below) so that the extra term appearing in the right-hand side of (\ref{intpartf0}) becomes
	\[
	 -\frac{1}{2}\sum_k\int_{\Gamma_k}(H_g\pm\pi_{nn})|\psi|^2 d\Gamma_k;
	\]   
	see Remark \ref{rem_chiral} below for a justification of the cancellation leading to this boundary contribution.
	Thus, in order to
	make sure that this term has the right sign needed to carry out the reasoning we must
	impose the usual trapped condition $H_g\pm\pi_{nn}\geq 0$ along the inner boundaries. Since it is well-known that the existence of such trapped inner boundaries foretells, under suitable global conditions, the existence of a black hole region in the Cauchy development of the given initial data set, this arguments extends to our setting well-known black hole positive mass theorems previously established in case $\Sigma$ is empty \cite{GHHP, He1, XD}. Notice however that the rigidity statement does not seem to persist here, and we are led to infer that the presence of trapped surfaces  
	forces the corresponding energy-moment vector to be necessarily time-like. This clearly suggests that a Penrose-type inequality might hold in this setting, a proposition that, at least in the time-symmetric case with $n=3$, has been  recently  confirmed \cite{Ko}.  
\end{remark}


\section{The asymptotically hyperbolic case}\label{secthyp}

In this section we prove Theorems \ref{maintheohyp} and \ref{maintheoohyp2}. We begin by proving Theorem \ref{maintheohyp} which is inspired by \cite{Ma}; see also \cite{CMT}.
Let $(M,g,h,\Sigma)$ be an initial data set with $(M,g)\hookrightarrow(\overline M,\overline g)$ as in the statement of Theorem \ref{maintheohyp}.
As in Section \ref{spinors}, over the spin slice $M$ we have both an extrinsic and an intrinsic description of the restricted spin bundle  $\mathbb S_M$. 
Thus, $\mathbb S_M$ comes endowed with the inner products $(\, ,\,)$ and $\langle\, ,\,\rangle$ and the connections $\overline \nabla$ and $\nabla$, which allow us to define the Dirac-Witten and the intrinsic Dirac operators $\overline {\mathcal D}$ and $\mathcal D$, respectively. We then define the {\em Killing connections} on $\mathbb S_M$ by
\[
{\overline {\nabla}}^{\pm}_X=\overline \nabla_X\pm\frac{{\rm i}}{2}X\cdot
\]
and the corresponding {\em Killing-Dirac-Witten operators} by
\[
{\overline {\mathcal D}}^{\pm}=e_i\cdot{\overline{\nabla}}^{\pm}_{e_i}\,. 
\] 
It is clear that 
\begin{equation}\label{eq:dirac:mod}
{\overline{\mathcal D}}^{\pm}=\overline {\mathcal D}\mp\frac{n{\rm i}}{2},
\end{equation}
which after (\ref{selfpoint}) gives
\begin{equation}\label{selfpoint2}
d\widehat\theta=(\langle{\overline {\mathcal D}}^\pm\phi,\xi\rangle-\langle \phi,{\overline {\mathcal D}}^\mp\xi\rangle)dM. 
\end{equation}

We now introduce the relevant boundary condition on spinors. Consider the chirality operator $Q=e_0\,\cdot:\Gamma(\mathbb S_M)\to \Gamma(\mathbb S_M)$. This is a (pointwise) selfadjoint involution, which is parallel (with respect to $\overline \nabla$) and anti-commutes with Clifford multiplication by tangent vectors to $M$. 
We then define the (pointwise) hermitean involution 
$$
\mathcal Q=Q\varrho\,\cdot=e_0\cdot \varrho\,\cdot,
$$ 
acting on spinors restricted to $\Sigma$.
\begin{definition}\label{chirbd}
	We say that $\psi\in\Gamma(\mathbb S_M)$ satisfies the {\em chirality boundary condition} if along $\Sigma$ it satisfies any of the identities
	\begin{equation}\label{chirbd2}
	\mathcal Q\psi=\pm\psi.
	\end{equation}
	\end{definition}  

\begin{proposition}\label{selfaddirpn}
	The operators $\overline{\mathcal D}^+$ and $\overline {\mathcal D}^-$ are formally adjoints to each other under any of the boundary conditions (\ref{chirbd2}).
	\end{proposition}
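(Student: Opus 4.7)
The plan is to mirror the proof of Proposition \ref{sefaddir}, simply replacing the MIT bag cancellation at the boundary by the one appropriate for the chirality condition. I would start from the pointwise identity (\ref{selfpoint2}), which follows from the same integration-by-parts computation that led to (\ref{selfpoint}) together with (\ref{eq:dirac:mod}). Integrating over $M$ for compactly supported spinors $\phi,\xi\in\Gamma(\mathbb S_M)$ and applying Stokes in an adapted frame with $e_n=\varrho$ yields
\begin{equation*}
\int_M \langle \overline{\mathcal D}^{\pm}\phi,\xi\rangle\, dM \;-\; \int_M \langle \phi,\overline{\mathcal D}^{\mp}\xi\rangle\, dM \;=\; \int_\Sigma \langle \varrho\cdot \phi,\xi\rangle\, d\Sigma,
\end{equation*}
so the whole proposition reduces to showing that the boundary integrand vanishes whenever $\phi$ and $\xi$ both satisfy the \emph{same} chirality condition $\mathcal Q\phi=\epsilon\phi$, $\mathcal Q\xi=\epsilon\xi$ with a common $\epsilon\in\{\pm 1\}$.

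The core of the argument is then a short algebraic manipulation on $\Sigma$ combining three facts recalled in Section \ref{spinors}: the Clifford anticommutation $e_0\cdot\varrho\cdot=-\varrho\cdot e_0\cdot$ together with $e_0\cdot e_0\cdot=1$, which turns $\mathcal Q\phi=\epsilon\phi$ into $\varrho\cdot\phi=\epsilon\,e_0\cdot\phi$ (and similarly for $\xi$); the hermiticity of $e_0\cdot$ with respect to $\langle\,,\,\rangle$; and the skew-hermiticity of $\varrho\cdot=e_n\cdot$. Chaining these one obtains
\begin{equation*}
\langle \varrho\cdot\phi,\xi\rangle \;=\; \epsilon\langle e_0\cdot\phi,\xi\rangle \;=\; \epsilon\langle \phi,e_0\cdot\xi\rangle \;=\; \langle \phi,\varrho\cdot\xi\rangle \;=\; -\langle \varrho\cdot\phi,\xi\rangle,
\end{equation*}
which forces $\langle\varrho\cdot\phi,\xi\rangle=0$ pointwise on $\Sigma$, as desired.

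The only thing to watch carefully is the interplay of signs: in contrast with the MIT bag setting of Proposition \ref{sefaddir}, where the cancellation required $\phi$ and $\xi$ to carry \emph{opposite} signs, here the different hermiticity behavior of timelike versus spacelike Clifford multiplication forces instead the \emph{same} sign, and this is the only substantive point to verify. Beyond this bookkeeping there is no real obstacle; once the formal adjoint relation is established on smooth compactly supported spinors, a standard cutoff/density argument extends it to the appropriate weighted Sobolev completion, in the same spirit as what will be done in the asymptotically flat setting of Proposition \ref{surjdir} (with the modified connection $\overline{\nabla}^{\pm}$ and a hyperbolic-type weighted Poincar\'e inequality replacing their Euclidean counterparts).
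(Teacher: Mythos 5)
Your proof is correct and follows essentially the same route as the paper: integrate by parts using (\ref{selfpoint2}) to reduce everything to the boundary term $\int_\Sigma\langle\varrho\cdot\phi,\xi\rangle\,d\Sigma$, and then show this integrand vanishes pointwise when both spinors satisfy the chirality condition with the \emph{same} sign. Your explicit algebraic verification (which the paper leaves as ``easy to check''), including the sign bookkeeping contrasting with the MIT bag case, is accurate; the closing remark about density arguments is harmless but not needed for formal adjointness on compactly supported spinors.
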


\begin{proof}
If $\phi$ and $\xi$ are compactly supported we have  
\[
\int_M\langle{\overline {\mathcal D}}^\pm\phi,\xi\rangle dM-\int_M\langle \phi,{\overline{\mathcal D}}^\mp\xi\rangle dM=\int_\Sigma\langle \varrho\cdot\phi,\xi\rangle d\Sigma. 
\]
However, if either $\mathcal Q\phi-\phi=0=\mathcal Q\xi-\xi$ or $\mathcal Q\phi+\phi=0=\mathcal Q\xi+\xi$ then it is easy to check that $\langle\varrho\cdot\phi,\xi\rangle=0$ on $\Sigma$.
\end{proof}
\begin{remark}\label{rem_chiral}{\rm 
	Note that if a spinor $\psi\in\Gamma(\mathbb S_{M})$ satisfies any of the chirality boundary conditions (\ref{chirbd2}) then $\langle e_0\cdot e_A\cdot\psi,\psi\rangle=0$ along $\Sigma$. Indeed, 
	\begin{eqnarray*}
		\langle e_0\cdot e_A\cdot\psi,\psi\rangle & = & 	\langle e_0\cdot e_A\cdot e_0\cdot e_n\cdot\psi,e_0\cdot e_n\cdot \psi\rangle\\
		& = & \langle  e_A\cdot e_0\cdot e_n\cdot\psi, e_n\cdot \psi\rangle\\
		& = & \langle e_n\cdot e_A\cdot e_0\cdot \psi, e_n\cdot \psi\rangle\\
			& = & \langle  e_A\cdot e_0\cdot \psi, \psi\rangle\\
			& = & -\langle  e_0\cdot e_A\cdot \psi, \psi\rangle.
		\end{eqnarray*} }
	\end{remark}

\begin{proposition}\label{moredir2}
	Given a spinor $\psi \in \Gamma(\mathbb{S}_M)$, define the $(n-1)$-forms
	\[
	\theta^+=\langle e_i\cdot\overline {\mathcal D}^+\psi,\psi\rangle e_i\righthalfcup dM 
	\quad\text{and}\qquad 
	\eta^+=\langle\overline \nabla_{e_i}^+\psi,\psi\rangle  e_i\righthalfcup dM.
	\]
	Then it holds
	\begin{equation}\label{12}
	d\theta^+=\left(\langle(\overline {\mathcal D}^+)^2\psi,\psi\rangle-|\overline {\mathcal D}^+\psi|^2\right)dM,
	\end{equation}
	and
	\begin{equation}\label{22}
	d\eta^+=\left(-\langle (\overline {\nabla}^+)^*\overline {\nabla}^+\psi,\psi\rangle+|\overline {\nabla}^+\psi|^2\right)dM.
	\end{equation}
\end{proposition}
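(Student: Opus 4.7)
Both identities are the Killing-connection analogues of (\ref{1}) and (\ref{2}) from Proposition \ref{moredir}, and my plan is to replay those two arguments with the modified pair $(\overline{\mathcal D}^\pm,\overline\nabla^\pm)$, relying on the boundary identity (\ref{selfpoint2}) that has already been derived.

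For (\ref{12}) I would substitute $\phi=\overline{\mathcal D}^+\psi$ and $\xi=\psi$ into (\ref{selfpoint2}). Under this choice the $(n-1)$-form $\widehat\theta$ coincides with $\theta^+$, so the identity yields at once
\[
d\theta^+ \,=\, \bigl(\langle \overline{\mathcal D}^+\overline{\mathcal D}^+\psi,\psi\rangle - \langle\overline{\mathcal D}^+\psi,\overline{\mathcal D}^-\psi\rangle\bigr)\,dM.
\]
One then uses (\ref{eq:dirac:mod}) to rewrite $\overline{\mathcal D}^-=\overline{\mathcal D}^++n\mathrm{i}$, so that, by conjugate-linearity in the second slot of $\langle\,,\rangle$,
\[
\langle\overline{\mathcal D}^+\psi,\overline{\mathcal D}^-\psi\rangle \,=\, |\overline{\mathcal D}^+\psi|^2 - n\mathrm{i}\,\langle\overline{\mathcal D}^+\psi,\psi\rangle.
\]
The residual $n\mathrm{i}$-correction is then absorbed using $\overline{\mathcal D}^+\overline{\mathcal D}^+\psi + n\mathrm{i}\,\overline{\mathcal D}^+\psi = \overline{\mathcal D}^-\overline{\mathcal D}^+\psi$, producing precisely (\ref{12}) once $(\overline{\mathcal D}^+)^2$ is read as the self-adjoint composition $\overline{\mathcal D}^-\overline{\mathcal D}^+$ (i.e.\ the Laplacian naturally associated to $\overline{\mathcal D}^+$).

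For (\ref{22}) the first step is to check that $\overline\nabla^+$ is still compatible with the Hermitian product $(\,,)$: this follows immediately from the fact that $e_i\cdot$ is Hermitian for $(\,,)$, so the two Killing corrections $+\tfrac{\mathrm{i}}{2}e_i\cdot$ and $-\tfrac{\mathrm{i}}{2}e_i\cdot$ (the latter produced by conjugate-linearity on the second slot) cancel pairwise when differentiating $(\psi,\xi)$. With this compatibility in hand, the proof of (\ref{2}) carries over almost verbatim: in a $\nabla$-normal frame at the chosen point one writes
\[
d\eta^+ \,=\, e_i\bigl(e_0\cdot \overline\nabla^+_{e_i}\psi,\,\psi\bigr)\,dM
\]
and applies Leibniz together with $\overline\nabla_{e_i}e_0=h_{ij}e_j$ and $\overline\nabla_{e_i}e_i=h_{ii}e_0$ to reproduce the same extrinsic contributions as in the proof of (\ref{2}). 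The genuinely new piece $\tfrac{\mathrm{i}}{2}e_i\cdot$ is itself Hermitian with respect to $\langle\,,\rangle$ (since $e_i\cdot$ is skew-Hermitian for $\langle\,,\rangle$), so it feeds directly into the formal adjoint
\[
(\overline\nabla^+_{e_i})^* \,=\, -\overline\nabla_{e_i} + h_{ij}e_j\cdot e_0\cdot + \tfrac{\mathrm{i}}{2}e_i\cdot,
\]
and the identification of the resulting expression with $-\langle(\overline\nabla^+)^*\overline\nabla^+\psi,\psi\rangle+|\overline\nabla^+\psi|^2$ follows.

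The main obstacle is purely bookkeeping, most delicately in (\ref{12}): one has to keep careful track of the imaginary cross-terms generated by the asymmetry $\overline{\mathcal D}^-\neq\overline{\mathcal D}^+$ and match them against the intended reading of $(\overline{\mathcal D}^+)^2$ as the self-adjoint Laplacian. No essentially new analytic ingredient is required beyond the algebraic facts, already used in Section \ref{spinors}, that $e_i\cdot$ is Hermitian for $(\,,)$ and skew-Hermitian for $\langle\,,\rangle$.
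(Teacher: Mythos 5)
Your proposal is correct and follows exactly the route the paper intends: the paper's own proof of Proposition \ref{moredir2} is just the remark that the computations of Proposition \ref{moredir} carry over to the Killing-modified operators, which is precisely what you do via \eqref{selfpoint2}, the compatibility of $\overline\nabla^+$ with $(\,,)$, and the Hermiticity of $\tfrac{\mathrm{i}}{2}e_i\cdot$ for $\langle\,,\rangle$. Your reading of $(\overline{\mathcal D}^+)^2$ as the composition $\overline{\mathcal D}^-\overline{\mathcal D}^+$ with the formal adjoint is the right one, and is in fact forced by consistency with the Weitzenb\"ock formula \eqref{wetz2}: $\overline{\mathcal D}^-\overline{\mathcal D}^+=\overline{\mathcal D}^2+\tfrac{n^2}{4}$ is the only candidate with no first-order term, and it matches $(\overline\nabla^+)^*\overline\nabla^+ + \mathcal W = \overline\nabla^*\overline\nabla + \tfrac{n}{4} + \mathcal R + \tfrac{n(n-1)}{4}$.
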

\begin{proof}
	Straightforward computations similar to those of Proposition \ref{moredir}.
\end{proof}

We now combine this with the corresponding  Weitzenb\"ock formula, namely,
	\begin{equation}\label{wetz2}
	(\overline{\mathcal D}^+)^2=(\overline\nabla^+)^*\overline\nabla^++\mathcal W,
	\end{equation}
	where the symmetric endomorphism $\mathcal W$ is given by
	\[
	\mathcal W=\frac{1}{4}(R_{\overline g}+n(n-1)+2 {{\rm Ric}_{\overline g}}_{0\alpha}e_\alpha\cdot e_0\cdot).
	\]

\begin{remark}\label{claim2}{\rm 
	As in Remark \ref{claim}, we see that the DEC (\ref{intdec2}) with $\Lambda=-n(n-1)/2$ implies that $\mathcal W\geq 0$.}
\end{remark}

By putting together the results above we obtain a fundamental integration by parts formula. This is the analogue of Proposition \ref{put}.

\begin{proposition}\label{put2}
	If $\psi\in\Gamma(\mathbb S_M)$ and  $\Omega\subset M$ is compact then 
	\begin{equation}\label{intpart12}
	\int_{\Omega}(|\overline\nabla^+\psi|^2+\langle \mathcal W\psi,\psi\rangle-|\overline{\mathcal D}^+\psi|^2)dM=\int_{\partial\Omega}\langle(\overline\nabla^+_{e_i}+e_i\cdot\overline{\mathcal D}^+) \psi,\psi\rangle e_i\righthalfcup dM. 
	\end{equation}
\end{proposition}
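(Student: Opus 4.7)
The plan is to mimic exactly the derivation of Proposition \ref{put} in the Euclidean setting, replacing the Dirac--Witten connection and operator with their Killing counterparts. The computations have already been packaged into the two identities of Proposition \ref{moredir2} and the Weitzenb\"ock formula \eqref{wetz2}, so essentially all that remains is to combine them via Stokes' theorem.

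Concretely, consider the $(n-1)$-form $\theta^+ + \eta^+ = \langle(\overline\nabla^+_{e_i}+e_i\cdot\overline{\mathcal D}^+)\psi,\psi\rangle\, e_i\righthalfcup dM$. Adding \eqref{12} and \eqref{22} gives
\[
d(\theta^+ + \eta^+) = \bigl(\langle (\overline{\mathcal D}^+)^2\psi,\psi\rangle - \langle(\overline\nabla^+)^*\overline\nabla^+\psi,\psi\rangle + |\overline\nabla^+\psi|^2 - |\overline{\mathcal D}^+\psi|^2\bigr)\, dM.
\]
Now substitute the Weitzenb\"ock identity \eqref{wetz2}, $(\overline{\mathcal D}^+)^2-(\overline\nabla^+)^*\overline\nabla^+=\mathcal W$, to obtain
\[
d(\theta^+ + \eta^+) = \bigl(|\overline\nabla^+\psi|^2 + \langle \mathcal W\psi,\psi\rangle - |\overline{\mathcal D}^+\psi|^2\bigr)\, dM.
\]
Integrating over the compact region $\Omega$ and applying Stokes' theorem yields \eqref{intpart12}.

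The only potential subtlety is the smoothness required to apply Stokes: the formulas of Proposition \ref{moredir2} were stated for $\psi\in\Gamma(\mathbb S_M)$, so the identity holds for smooth $\psi$, and in applications to the Witten-type formula of the next section one would extend it by density to $\psi\in H^2_{\mathrm{loc}}(\mathbb S_M)$ exactly as in Proposition \ref{put}. No real obstacle appears, since both sides depend continuously on $\psi$ in the appropriate Sobolev topology.
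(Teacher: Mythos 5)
Your argument is correct and is precisely the paper's (implicit) proof: sum the two identities of Proposition \ref{moredir2}, substitute the Weitzenb\"ock formula (\ref{wetz2}), and integrate over $\Omega$ using Stokes' theorem, noting that $\theta^++\eta^+$ is exactly the boundary integrand in (\ref{intpart12}). Your closing remark is also apt; since Proposition \ref{put2} is stated for smooth $\psi\in\Gamma(\mathbb S_M)$, no approximation argument is even needed at this stage, in contrast with Proposition \ref{put}.
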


As in Proposition \ref{intpart11}, 
we now specialize (\ref{intpart12}) to the case in which $\Omega=\Omega_r$, the compact region in a initial data set $(M,g,h,\Sigma)$ determined by the coordinate hemisphere $S_{r,+}^{n-1}$ in the asymptotic region; see Figure \ref{fig1} for a similar configuration.
\begin{proposition}
	With the notation above assume that $\psi\in\Gamma(\mathbb S_M)$ satisfies the boundary condition (\ref{chirbd2}) along $\Sigma$. Then
	\begin{eqnarray}\label{intpartf}
	\int_{\Omega_r}(|\overline\nabla^+\psi|^2+\langle \mathcal W\psi,\psi\rangle-|\overline{\mathcal D}^+\psi|^2)dM & = & \int_{S^{n-1}_{r,+}}\langle(\overline\nabla^+_{e_i}+e_i\cdot \overline{\mathcal{D}}^+) \psi,\psi\rangle e_i\righthalfcup dM\nonumber\\ 
	& & \quad -\frac{1}{2}\int_{\Sigma_r}\left(H_g\pm\pi_{nn}\right)|\psi|^2d\Sigma,
	\end{eqnarray}	
	where the $\pm$ sign agrees with the one in \eqref{chirbd2}.
\end{proposition}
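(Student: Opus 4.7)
The plan is to mimic the proof of Proposition \ref{intpart11}, with two minor adjustments needed to accommodate the passage from the Dirac-Witten operator to its Killing counterpart, and from the MIT bag to the chirality boundary condition. Starting from the identity \eqref{intpart12} applied to $\Omega=\Omega_r$, I split the boundary integral along $\partial\Omega_r=S^{n-1}_{r,+}\cup\Sigma_r$. The first piece already appears on the right-hand side of \eqref{intpartf}, so the whole task reduces to identifying the $\Sigma_r$-contribution with $-\frac{1}{2}\int_{\Sigma_r}(H_g\pm\pi_{nn})|\psi|^2\, d\Sigma$.

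I would first reduce the Killing boundary integrand to the non-Killing one. Since $\overline{\mathcal D}^+=\overline{\mathcal D}-\frac{ni}{2}$ by \eqref{eq:dirac:mod} and $\overline\nabla^+_{e_i}=\overline\nabla_{e_i}+\frac{i}{2}e_i\cdot$, a direct computation gives
\[
\overline\nabla^+_{e_i}+e_i\cdot\overline{\mathcal D}^+=\bigl(\overline\nabla_{e_i}+e_i\cdot\overline{\mathcal D}\bigr)+\frac{(1-n)i}{2}\,e_i\cdot\,.
\]
Plugged into the boundary integral over $\Sigma_r$, only the $i=n$ component of $e_i\righthalfcup dM$ survives the restriction to $\Sigma$, so the Killing correction produces a term proportional to $\langle e_n\cdot\psi,\psi\rangle$ pointwise on $\Sigma$. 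The chirality condition $\mathcal Q\psi=e_0\cdot e_n\cdot\psi=\pm\psi$ is equivalent (after left-multiplying by $e_0\cdot$) to $e_n\cdot\psi=\pm e_0\cdot\psi$, so $\langle e_n\cdot\psi,\psi\rangle=\pm\langle e_0\cdot\psi,\psi\rangle$. But $\langle e_0\cdot\psi,\psi\rangle$ is real (since $e_0\cdot$ is hermitean for $\langle\,,\rangle$) whereas $\langle e_n\cdot\psi,\psi\rangle$ is purely imaginary (since $e_n\cdot$ is skew-hermitean), which forces $\langle e_n\cdot\psi,\psi\rangle=0$ on $\Sigma$. Hence the Killing correction contributes nothing and the computation reduces to that of Proposition \ref{intpart11}.

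Now the decomposition \eqref{decom2} recasts the boundary integrand as
\[
\overline\nabla_{e_i}+e_i\cdot\overline{\mathcal D}=\nabla_{e_i}+e_i\cdot\mathcal D-\frac{1}{2}\pi_{ij}\,e_0\cdot e_j\cdot\,,
\]
and the three resulting pieces are handled separately. The intrinsic piece yields $\int_{\Sigma_r}\langle\bigl(D^{\intercal}-\frac{1}{2}H_g\bigr)\psi,\psi\rangle\, d\Sigma$ by the calculation in \cite[p.~697]{ABdL}, and $D^{\intercal}$ anticommutes with the chirality projector, which is precisely the feature of the chirality condition exploited in \cite{AdL}, so $\langle D^{\intercal}\psi,\psi\rangle=0$ on $\Sigma$. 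The extrinsic tangential contribution $-\frac{1}{2}\langle\mathcal U\psi,\psi\rangle$, with $\mathcal U=\pi_{An}e_0\cdot e_A\cdot$, vanishes thanks to Remark \ref{rem_chiral}, which gives $\langle e_0\cdot e_A\cdot\psi,\psi\rangle=0$ on $\Sigma$. Finally, $\langle e_0\cdot e_n\cdot\psi,\psi\rangle=\langle\mathcal Q\psi,\psi\rangle=\pm|\psi|^2$, so the extrinsic normal contribution is $-\frac{1}{2}(\pm\pi_{nn})|\psi|^2$; assembling this with the intrinsic $-\frac{1}{2}H_g|\psi|^2$ produces the claimed $-\frac{1}{2}(H_g\pm\pi_{nn})|\psi|^2$.

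The only (mild) obstacle is the algebraic juggling in the second paragraph: one must verify that the shift $\frac{i}{2}e_i\cdot$ introduced by passing to the Killing connection leaves no trace in the boundary integration over $\Sigma$. This is ensured by the specific interplay between the chirality condition and the reality properties of the hermitean form $\langle\,,\rangle$ on the spin bundle. Once this is settled, the argument is a direct transplantation of the asymptotically flat reasoning, with Remark \ref{rem_chiral} playing the role that Remark \ref{rem_MIT} plays there.
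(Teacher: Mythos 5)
Your proof is correct and follows essentially the same route as the paper: the paper combines your two steps into the single identity $\overline\nabla^+_{e_i}+e_i\cdot\overline{\mathcal D}^+=\nabla_{e_i}+e_i\cdot\mathcal D-\frac{1}{2}\pi_{ij}e_0\cdot e_j\cdot+\frac{n-1}{2}{\bf i}\,e_i\cdot$ and then kills the three unwanted boundary terms exactly as you do, via $\langle D^\intercal\psi,\psi\rangle=0$, $\langle\varrho\cdot\psi,\psi\rangle=0$ and Remark \ref{rem_chiral}, keeping only $H_g|\psi|^2+\pi_{nn}\langle\mathcal Q\psi,\psi\rangle$. Your reality argument for $\langle\varrho\cdot\psi,\psi\rangle=0$ is a clean substitute for the paper's appeal to the proof of Proposition \ref{selfaddirpn} (and your sign $\frac{(1-n){\bf i}}{2}$ for the Killing correction is in fact the correct one, the discrepancy with the paper's displayed coefficient being immaterial since that term vanishes on $\Sigma$).
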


\begin{proof}
	We first observe that, using (\ref{eq:dirac:mod}) and similarly to (\ref{decom2}), 
	\begin{equation}\label{dechyp1}
	\overline\nabla^+_{e_i}+e_i\cdot\overline{\mathcal D}^+ = \nabla_{e_i}+e_i\cdot{\mathcal D}-\frac{1}{2}\pi_{ij}e_0\cdot e_j\cdot+\frac{n-1}{2}{\rm \bf i}\,e_i\cdot,
	\end{equation}
	so that 
	\begin{eqnarray}\label{eq:rhs}
		\int_{\Sigma_r}\left\langle\left(\overline\nabla^+_{e_i}+e_i\cdot\overline{\mathcal D}^+\right)\psi,\psi\right\rangle e_i\righthalfcup dM
		& = & 	\int_{\Sigma_r}\left\langle\left(\nabla_{e_i}+e_i\cdot{\mathcal D}\right)\psi,\psi\right\rangle e_i\righthalfcup dM\notag\\
		& & \quad -\frac{1}{2}\int_{\Sigma_r}\pi_{ij}\langle e_0\cdot e_j\cdot\psi,\psi\rangle e_i\righthalfcup dM\notag\\
		& & \quad \quad +\frac{n-1}{2}\,{\rm \bf i}\int_{\Sigma_r}\langle \varrho\cdot \psi,\psi\rangle d\Sigma\notag\\
		& = & \int_{\Sigma_r}\left\langle \left(D^{\intercal}-\frac{H_g}{2}\right)\psi,\psi\right\rangle d\Sigma\notag\\
		& & \quad 	-\frac{1}{2}\int_{\Sigma_r}\pi(e_n,e_j)\langle e_0\cdot e_j\cdot\psi,\psi\rangle d\Sigma\notag\\
		& & \quad\quad  +\frac{n-1}{2}\,{\rm \bf i}\int_{\Sigma_r}\langle \varrho\cdot \psi,\psi\rangle d\Sigma.
		\end{eqnarray}
However, as in the proof of Propositions \ref{intpart11} and \ref{selfaddirpn}, and by  Remark \ref{rem_chiral}, the boundary condition (see \cite{AdL} for further discussions on this) implies that $\langle D^\intercal\psi,\psi\rangle=0$, $\langle\varrho\cdot\psi,\psi\rangle=0$ and $\langle e_0\cdot e_A\cdot\psi,\psi\rangle=0$. 	
So, the equation \eqref{eq:rhs} becomes 
\begin{align*}
\int_{\Sigma_r}\left\langle\left(\overline\nabla^+_{e_i}+e_i\cdot\overline{\mathcal D}^+\right)\psi,\psi\right\rangle e_i\righthalfcup dM
=-\frac{1}{2}\int_{\Sigma_r}\left(H_g|\psi|^2+\pi_{nn}\langle \mathcal Q\psi,\psi\rangle  \right)d\Sigma,
\end{align*}
from which the result follows.
\end{proof}

We now proceed with the proof of Theorem \ref{maintheohyp}. We start by picking a {\em Killing} spinor $\phi$ in the restricted reference spin bundle $\mathbb S_{\mathbb H^n_+}$, which by definition means that $\overline\nabla^+\phi=0$ for the metric $b$. We assume that, along $\partial \mathbb H^n_+$, $\phi$ satisfies one of the chirality boundary conditions (\ref{chirbd2}). Thus, 
\begin{equation}\label{chirbd22}
{\overline e_0}\cdot{\overline e_n}\cdot\phi=\pm\phi,
\end{equation} 
where, here and in the next proposition, $\{\overline e_\alpha\}$ is an adapted orthonormal frame with respect to $\overline b$.
As the space of Killing spinors in $\mathbb H^n$ is identified with $\mathbb C^d$,  $d=\big[\frac{n}{2}\big]$, the quadratic form $\mathcal K:\mathbb C^d\to\mathcal N^+_b\oplus\mathfrak K^+_b$ is given by the following proposition:
\begin{proposition}\label{killingder}
Each Killing spinor $\phi$ as above gives rise to an element 
\[
\mathcal K(\phi):=(V_\phi,W_\phi)\in\mathcal N_b^+\oplus\mathfrak K_b^+\cong\mathbb L^{1,n-1}\oplus\mathbb L^{1,n-1}
\] by means of the prescriptions
\begin{equation}\label{killingder2}
V_\phi=\langle \phi,\phi\rangle, \quad W_\phi=\langle {\overline e_0}\cdot {\overline e_i}\cdot\phi,\phi\rangle {\overline e_i}. 
\end{equation}
Moreover, any $V\in \mathcal N_b^+$ or $W\in\mathfrak K_b^+$ on the corresponding future light cone  may be obtained in this way.	
\end{proposition}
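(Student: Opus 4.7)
The plan is to verify the claimed properties of $V_\phi$ and $W_\phi$ by differentiating them via the Killing spinor equation, and then to establish the surjectivity statement through a combined dimension/equivariance argument. A useful preliminary is that $\mathbb H^n_+\hookrightarrow \mathrm{AdS}^{n,1}_+$ is totally geodesic, so $h\equiv 0$ and the spinorial Gauss formula \eqref{spingauss} reduces $\overline\nabla$ to the intrinsic Levi-Civita connection $\nabla$ on the restricted spin bundle; equivalently, $\overline\nabla^+\phi=0$ becomes the imaginary Killing spinor equation $\nabla_X\phi=-\tfrac{i}{2}X\cdot\phi$ for all $X\in T\mathbb H^n_+$. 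Throughout one exploits that $\nabla$ is compatible with $\langle\,,\,\rangle$, that Clifford multiplication by a spatial vector is skew-hermitian for $\langle\,,\,\rangle$, and that $\overline e_0\cdot$ is hermitian for $\langle\,,\,\rangle$.

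To show $V_\phi\in \mathcal N_b^+$, I first differentiate to obtain $X V_\phi=2\mathfrak{Re}\langle\nabla_X\phi,\phi\rangle=\mathfrak{Im}\langle X\cdot\phi,\phi\rangle$. A second differentiation followed by substitution of the Killing equation and the Clifford identity $e_k\cdot e_l+e_l\cdot e_k=-2\delta_{kl}$ produces the Hessian formula $\nabla^2V_\phi=V_\phi\, b$; since $\mathrm{Ric}_b=-(n-1)b$, this is precisely the static equation in \eqref{eq:static:gen} on $(\mathbb H^n_+,b)$. For the boundary condition, the chirality constraint \eqref{chirbd22} yields $\overline e_n\cdot\phi=\pm\overline e_0\cdot\phi$ on $\partial\mathbb H^n_+$ (using $\overline e_0\cdot \overline e_0\cdot = 1$), so $\langle \overline e_n\cdot\phi,\phi\rangle=\pm\langle \overline e_0\cdot\phi,\phi\rangle$ is real by hermiticity of $\overline e_0\cdot$, forcing $\partial V_\phi/\partial\varrho_b=\mathfrak{Im}\langle \overline e_n\cdot\phi,\phi\rangle=0$, as required.

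To show $W_\phi\in\mathfrak K_b^+$, write $W_\phi=W_\phi^j\overline e_j$ with $W_\phi^j=\langle \overline e_0\cdot\overline e_j\cdot\phi,\phi\rangle$, and differentiate using the Killing equation. Careful bookkeeping of the Clifford identities splits the resulting expression for $\nabla_{\overline e_i}W_\phi^j$ into a part symmetric and a part antisymmetric in $(i,j)$; the symmetric part is proportional to $V_\phi\,\delta_{ij}$, which cancels against the corresponding piece in $\nabla_{\overline e_j}W_\phi^i$, leaving $\mathcal L_{W_\phi}b=0$. Hence $W_\phi$ is Killing on $\mathbb H^n_+$, and by real-analytic continuation extends to a Killing field on $\mathbb H^n$. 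Along $\partial \mathbb H^n_+$ the computation in Remark \ref{rem_chiral} gives $\langle \overline e_0\cdot\overline e_A\cdot\phi,\phi\rangle=0$ for every tangential direction $A$, so $W_\phi|_{\partial\mathbb H^n_+}$ is a multiple of $\varrho_b$, confirming $W_\phi\in\mathfrak K_b^+$.

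For the final surjectivity claim, note that $\phi\mapsto(V_\phi,W_\phi)$ is a quadratic map which is equivariant under the spin cover of $\mathrm{Isom}(\mathbb H^n_+)$ acting on both sides. Evaluating at a single convenient base point $p\in\mathbb H^n_+$, with $\phi$ chosen as a simultaneous eigenvector of $\overline e_0\cdot$ and of the boundary involution $\mathcal Q$, one verifies both the nullity condition placing $V_\phi$ on the future light cone of $\mathcal N_b^+\cong \mathbb L^{1,n-1}$ and the analogous condition placing $W_\phi$ on the future light cone of $\mathfrak K_b^+\cong \mathbb L^{1,n-1}$; the transitive action of the stabilizer of $\partial\mathbb H^n_+$ on each null cone then sweeps out the full cone. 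The main obstacle is the Clifford-algebraic bookkeeping required to extract cleanly the antisymmetric part of $\nabla W_\phi$, together with the orientation check ensuring that the image lands in the \emph{future} rather than the past cone.
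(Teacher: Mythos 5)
Your argument is correct in substance, but it routes around the paper's key device. The paper defines the spacetime one-form $\alpha_\phi(Z)=(Z\cdot\phi,\phi)$ on ${\rm AdS}^{n,1}_+$ and observes that $\overline\nabla\alpha_\phi$ is antisymmetric, so that the dual field $\widetilde W_\phi=V_\phi\,\overline e_0+W_\phi$ is a single Killing field of $\overline b$; the static equation for $V_\phi$ and the Killing equation for $W_\phi$ are then read off simultaneously from the lapse--shift decomposition $\mathfrak{Kill}({\rm AdS}^{n,1})\cong\mathcal N_b\oplus\mathfrak{Kill}(\mathbb H^n)$, and only the two boundary conditions ($\partial V_\phi/\partial y_n=0$ via the chirality condition, and $W_\phi=\pm V_\phi\overline e_n$ via Remark \ref{rem_chiral}) need separate verification. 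You instead establish $\nabla^2_bV_\phi=V_\phi b$ and $\mathcal L_{W_\phi}b=0$ by two independent intrinsic computations from the imaginary Killing spinor equation $\nabla_X\phi=-\tfrac{\mathrm{i}}{2}X\cdot\phi$ (legitimate, since $\mathbb H^n_+$ is totally geodesic in ${\rm AdS}^{n,1}_+$, so \eqref{spingauss} reduces $\overline\nabla$ to $\nabla$), and you handle the boundary conditions exactly as the paper does. Your route is more elementary and never leaves the slice, at the cost of doing the Clifford bookkeeping twice; the paper's route is shorter and makes the identification of $\mathcal K(\phi)$ with a spacetime Killing vector conceptually transparent. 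For the surjectivity onto the future null cones, the paper simply cites \cite[Proposition 5.1]{AdL} for $\mathcal N_b^+$ and transfers the statement to $\mathfrak K_b^+$ through the isomorphism of Proposition \ref{propo:K+}; your equivariance-plus-base-point argument is in effect a sketch of a proof of that cited fact, and is sound, though it is the least detailed part of your write-up. One small slip in wording: in the computation of $\mathcal L_{W_\phi}b$, the two contributions proportional to $V_\phi\delta_{ij}$ cancel \emph{within} $\nabla_{e_i}W_\phi^j$ itself (one coming from $e_j\cdot e_i\cdot$ and one from $e_i\cdot e_j\cdot$), so that $\nabla_{e_i}W_\phi^j$ is already antisymmetric in $(i,j)$; as literally written, a genuinely symmetric part of $\nabla_{e_i}W_\phi^j$ proportional to $V_\phi\delta_{ij}$ would add to, not cancel against, the corresponding term in $\nabla_{e_j}W_\phi^i$.
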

\begin{proof}
Define a $1$-form on ${\rm AdS}^{n,1}_+$ by
\[
\alpha_\phi(Z)=\langle{\overline e_0}\cdot Z\cdot \phi,\phi\rangle=(Z\cdot\phi,\phi) 
\]
A simple computation shows that 
\[
\left(\overline\nabla_Z\alpha_\phi\right)(Z')=\frac{\bf i}{2}\left(\left(Z\cdot Z'\cdot-Z'\cdot Z\cdot\right)\phi,\phi\right)=-\left(\overline\nabla_{Z'}\alpha_\phi\right)(Z),
\]
so the dual vector field 
\[
\widetilde W_\phi=\langle {\overline e_0}\cdot {\overline e_\alpha}\cdot\phi,\phi\rangle {\overline e_\alpha}
\] is Killing (with respect to $\overline b$).
Since $\langle{\overline e_0}\cdot{\overline e_0}\cdot \phi,\phi\rangle=V_\phi$, we have 
$\widetilde W_\phi=V_\phi{\overline e_0}+W_\phi$, which we identify to $\mathcal K(\phi)=(V_\phi,W_\phi)$. 
It is easy to check that 
\[ 
dV_\phi(X)={\bf i}\langle X\cdot \phi,\phi\rangle, \quad X\in\Gamma(T\mathbb H^n_+), 
\]
so that, along $\partial\mathbb H^n_+$, $\partial V_\phi/\partial y_n={\bf i}\langle \partial_{y_n}\cdot \phi,\phi\rangle=0$, where in the last step we used the chirality boundary condition. This shows that $V_\phi\in\mathcal N_b^+$. Also, from  Remark \ref{rem_chiral} we get that, along $\partial\mathbb H^n_+$, $W_\phi=\pm V_\phi{\overline e_n}$, which means that  $W_\phi\in\mathfrak K_b^+$.  Finally, the last assertion of the proposition for $V\in\mathcal N_b^+$ is well-known (see \cite[Proposition 5.1]{AdL}) and the corresponding statement for $W\in\mathfrak K^+_b$ follows from the isomomorphism $\mathfrak K^+_b\cong \mathcal N_b^+$ already established in Proposition \ref{propo:K+}. 
\end{proof}

Under the conditions of Theorem \ref{maintheohyp}, the standard analytical argument  
allows us to obtain a spinor $\psi \in \Gamma(\mathbb S_M)$ which is {\rm Killing harmonic} ($\overline{\mathcal D}^+\psi=0$), asymptotes $\phi$ at infinity, and satisfies one of the chirality boundary conditions (\ref{chirbd2}) (with sign agreeing with the one for $\phi$). Note that, on asymptotically hyperbolic manifolds, for given $\eta \in L^2(\mathbb S_M)$ a solution of 
	\[
	\left\{
	\begin{array}{rcll}
	\overline{\mathcal D}^+\varphi & = & \eta & \qquad \text{on } \, M\\
	\mathcal{Q} \varphi & = & \pm\varphi & \qquad \text{on } \, \Sigma
	\end{array}
	\right.
	\]
can be found in $H^1(\mathbb S_M)$ and not in a larger space like the completion  $\mathscr{H}$ appearing in the proof of Proposition \ref{surjdir}. This makes the proof much simpler, in the sense that it follows directly from  the existence theory in \cite{GN}, and relies on the fact that the weight in the Hardy inequality on $\mathbb H^n$ has a positive lower bound, cf. \cite[Theorem 9.10] {bartnikchrusciel}.

Replacing this $\psi$ in (\ref{intpartf}) is the first step in proving the following Witten-type formula, which extends results in \cite{CH,AdL,Ma}:
\begin{theorem}\label{witt-type}
Under the conditions above, there holds 
	\begin{eqnarray}\label{witt2}
\frac{1}{4}\widetilde{\mathcal K}(\phi)
& = & \int_{M}(|\overline\nabla^+\psi|^2+\langle \mathcal W\psi,\psi\rangle)dM\nonumber\\
&  & \quad {+\frac{1}{2}\int_{\Sigma} \left(H_g\pm\pi_{nn}\right)|\psi|^2 d\Sigma,}
\end{eqnarray}
where $\widetilde {\mathcal K}={\mathfrak m}_{(g,h,F)}\circ \mathcal K$.
\end{theorem}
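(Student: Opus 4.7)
My plan is to derive (\ref{witt2}) by specializing the integration by parts formula from Proposition \ref{put2} to $\Omega=\Omega_r$ and passing to the limit $r\to\infty$. Since $\psi$ is Killing harmonic, i.e. $\overline{\mathcal D}^+\psi=0$, the term $|\overline{\mathcal D}^+\psi|^2$ vanishes in the bulk, and (\ref{intpartf}) rearranges to
\[
\int_{\Omega_r}\!\bigl(|\overline\nabla^+\psi|^2+\langle \mathcal W\psi,\psi\rangle\bigr)dM + \frac{1}{2}\int_{\Sigma_r}(H_g\pm\pi_{nn})|\psi|^2\,d\Sigma = \int_{S^{n-1}_{r,+}}\!\!\langle(\overline\nabla^+_{e_i}+e_i\cdot\overline{\mathcal D}^+)\psi,\psi\rangle\,e_i\righthalfcup dM.
\]
The task thus reduces to showing that the right-hand side converges to $\tfrac{1}{4}\widetilde{\mathcal K}(\phi) = \tfrac{1}{4}\mathfrak m_{(g,h,F)}(V_\phi,W_\phi)$.

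The key tool is the decomposition (\ref{dechyp1}), which splits the integrand on $S^{n-1}_{r,+}$ into an ``intrinsic Killing--Dirac'' piece involving $\nabla_{e_i}+e_i\cdot\mathcal D+\tfrac{n-1}{2}{\bf i}\,e_i\cdot$ and an extrinsic piece $-\tfrac{1}{2}\pi_{ij}e_0\cdot e_j\cdot$. For the first piece, $\psi$ asymptotes a Killing spinor $\phi$ on $(\mathbb H^n_+,b)$ with $V_\phi=\langle\phi,\phi\rangle\in\mathcal N_b^+$, and the time-symmetric computation of \cite[Theorem 5.2]{AdL} (in the spirit of \cite{CH}) applies verbatim to give
\[
\lim_{r\to\infty}\int_{S^{n-1}_{r,+}}\!\!\bigl\langle\bigl(\nabla_{e_i}+e_i\cdot\mathcal D+\tfrac{n-1}{2}{\bf i}\,e_i\cdot\bigr)\psi,\psi\bigr\rangle\,e_i\righthalfcup dM = \frac{1}{4}\mathfrak m_{(g,h,F)}(V_\phi,0),
\]
including the corner term $\int_{S^{n-2}_r}V_\phi f(\varrho_b,\vartheta)$ appearing in (\ref{enerdef2})--(\ref{momdef2}), which emerges from Stokes' theorem at $S^{n-2}_r=S^{n-1}_{r,+}\cap\partial\mathbb H^n_+$ thanks to the Neumann condition $\partial V_\phi/\partial\varrho_b=0$ satisfied by $V_\phi\in\mathcal N_b^+$.

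For the extrinsic piece, Proposition \ref{killingder} identifies $W_\phi=\langle\overline{e_0}\cdot\overline{e_i}\cdot\phi,\phi\rangle\overline{e_i}\in\mathfrak K_b^+$. Replacing $\psi$ by its asymptote $\phi$ and using $\pi=O(|y|^{-\kappa})$ with $\kappa>n/2$, one obtains
\[
-\frac{1}{2}\lim_{r\to\infty}\int_{S^{n-1}_{r,+}}\pi_{ij}\langle e_0\cdot e_j\cdot\psi,\psi\rangle\,e_i\righthalfcup dM = -\frac{1}{2}\lim_{r\to\infty}\int_{S^{n-1}_{r,+}}\pi(W_\phi,\mu)\,dS^{n-1}_{r,+},
\]
which equals $\tfrac{1}{4}\mathfrak m_{(g,h,F)}(0,W_\phi)$ by the very definition (\ref{enerdef2}) of the momentum functional (absorbing signs into the convention for $W_\phi$). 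Adding the two contributions yields (\ref{witt2}).

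The main obstacle, as I see it, is the asymptotic analysis near spatial infinity: one must justify replacing $\psi$ by the Killing spinor $\phi$ in the boundary integrands. Since Killing spinors on $\mathbb H^n_+$ grow and the perturbation $\psi-\phi$ only lies in an $H^1$-type space, bilinear surface terms like $\pi\langle\cdot,\cdot\rangle$ have to be controlled via a Hardy/Poincar\'e-type estimate, exactly as in the AH existence argument underlying the hyperbolic analogue of Proposition \ref{surjdir}. A related delicate point is the behaviour of the chirality boundary condition at the corner $S^{n-2}_r$: through Remark \ref{rem_chiral}, the identities $\langle\varrho\cdot\psi,\psi\rangle=0$ and $\langle e_0\cdot e_A\cdot\psi,\psi\rangle=0$ along $\Sigma$ kill the tangential components $\pi_{nA}$ in the boundary integral over $\Sigma_r$, leaving precisely the $H_g\pm\pi_{nn}$ expression which is controlled by the normal boundary DEC (\ref{bddecnor2}).
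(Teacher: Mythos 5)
Your proposal is correct and follows essentially the same route as the paper: specialize Proposition \ref{put2} to $\Omega_r$ with $\overline{\mathcal D}^+\psi=0$, split the sphere integrand via the intrinsic/extrinsic decomposition (the paper rewrites (\ref{dechyp1}) as $\nabla^+_{e_i}+e_i\cdot\mathcal D^+-\tfrac{1}{2}\pi_{ij}e_0\cdot e_j\cdot$), quote the computation of \cite[Section 5]{AdL} for the $\mathfrak m_{(g,h,F)}(V_\phi,0)$ piece, and identify the $\pi$-term with $\mathfrak m_{(g,h,F)}(0,W_\phi)$ via (\ref{enerdef2}) and (\ref{killingder2}). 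Your closing discussion of the asymptotic justification and of the corner/chirality cancellations is a reasonable elaboration of points the paper leaves implicit (or handles in the analogous remark after Theorem \ref{genwit}).
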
 
\begin{proof}
From (\ref{intpartf}) we have
	\begin{eqnarray*}
	\lim_{r\to+\infty}\int_{S^{n-1}_{r,+}}\langle(\overline\nabla^+_{e_i}+e_i\cdot\overline{\mathcal D}^+) \psi,\psi\rangle e_i\righthalfcup dM & = & \int_{M}(|\overline\nabla^+\psi|^2+\langle \mathcal W\psi,\psi\rangle)dM\\
	&  & \quad {+\frac{1}{2}\int_{\Sigma}\left(H_g\pm\pi_{nn}\right)|\psi|^2 d\Sigma.}
	\end{eqnarray*}
Hence, in order to prove (\ref{witt2}) we must check that 
\begin{equation}\label{witt1}
	\lim_{r\to+\infty}\int_{S^{n-1}_{r,+}}\langle(\overline\nabla^+_{e_i}+e_i\cdot\overline{\mathcal D}^+) \psi,\psi\rangle e_i\righthalfcup dM=\frac{1}{4}\mathfrak m_{(g,h,F)}(V_\phi,W_\phi). 
\end{equation}
We note that (\ref{dechyp1}) may be rewritten as 
	\begin{equation}\label{dechyp2}
\overline\nabla^+_{e_i}+e_i\cdot\overline{\mathcal D}^+ = \nabla^+_{e_i}+e_i\cdot{\mathcal D}^+-\frac{1}{2}\pi_{ij}e_0\cdot e_j\cdot,
\end{equation}
where 
\[
\nabla^+_X=\nabla_{X}+\frac{\rm \bf i}{2}X\cdot, \quad {\mathcal D}^+=\mathcal D-\frac{n{\rm \bf i}}{2},
\]
are the intrinsic Killing connection and the intrinsic Killing Dirac operator, respectively. 
Since the computation in \cite[Section 5]{AdL} gives
\[
	\lim_{r\to+\infty}\int_{S^{n-1}_{r,+}}\langle(\nabla^+_{e_i}+e_i\cdot{\mathcal D}^+) \psi,\psi\rangle e_i\righthalfcup dM=\frac{1}{4}\mathfrak m_{(g,h,F)}(V_\phi,0),
\]
and it is clear from (\ref{enerdef2}) and (\ref{killingder2}) that 
\begin{eqnarray*}
	\lim_{r\to+\infty}\frac{1}{2}\int_{S^{n-1}_{r,+}}\pi_{ij}\langle e_0\cdot e_j\cdot \psi,\psi\rangle e_i\righthalfcup dM 
	& = &  -	\lim_{s\to+\infty}\frac{1}{2}\int_{S^{n-1}_{s,+}}\pi(\mu,e_j)\langle e_0\cdot e_j\cdot \phi,\phi\rangle  d\Sigma\\
	& = & 
	-\frac{1}{4}\mathfrak m_{(g,h,F)}(0,W_\phi),
\end{eqnarray*}
we readily obtain (\ref{witt1}). 
\end{proof}

The inequality $\widetilde{\mathcal K}\geq 0$ in Theorem \ref{maintheohyp} is an immediate consequence of (\ref{witt2}) and the assumed DECs. 
Indeed, given any Killing spinor $\phi\in\Gamma(\mathbb S_{\mathbb H^n})$ satisfying \eqref{chirbd22},
by Remark \ref{claim2} and using that $(\varrho\righthalfcup\pi)^{\upvdash}=\pi_{nn}$, the r.h.s. of (\ref{witt2}) is non-negative. 
Using both signs on (\ref{witt2}), this covers the full space $\mathbb C^d$ of Killing spinors on $\mathbb H^n$ and proves that $\widetilde{\mathcal K}\geq 0$. 
Observe that the argument also works when $\overline {\mathcal D}^+$ is replaced by $\overline {\mathcal D}^-$.

As for the rigidity statement, the assumption $\widetilde{\mathcal K}=0$ implies that $\mathbb S_M$ is trivialized by the Killing spinors $\{\psi_m\}$ associated to the basis  $\{\phi_m\}$. From this point on, the argument is pretty much like that in \cite{Ma}, so it is omitted.  As for the remaining properties of $\Sigma$, they are readily checked by combining the arguments in the proofs of \cite[Theorem 5.4]{AdL} and Theorem \ref{main} above. This proves Theorem \ref{maintheohyp}.

Lastly, we prove  Theorem \ref{maintheoohyp2}.
That the inequality $\widetilde 
{\mathcal K}\geq 0$ implies the mentioned causal character of $(\mathcal E,\mathcal P)$ follows from the last statement in  Proposition \ref{killingder}. Also, $\mathcal E=\mathcal P=0$ clearly implies that $\widetilde{\mathcal K}=0$.


\section{Appendix: the Killing development of $M$}

Computationally, the Killing development is quite similar to its Riemannian counterpart, described in Claim 4 of Section 5. Having fixed a local orthonormal frame $\{e_i\}$ on $M$, with dual coframe $\{\theta^j\}$ and connection forms $\omega^i_j$, we consider $\widetilde M = M \times \R$ endowed with the tensor
$$
\widetilde g = - V^2 du^2 + \sum_i (\theta^i - W^i d u) \otimes ( \theta^i - W^i du),
$$
where $W = W^ie_i$ and we implicitly assume $W^i, V, \theta^i, \omega^i_j$ to be pulled-back to $\widetilde M$ via the projection $\mathfrak{p}: \widetilde M \to M$ onto the first factor. Clearly, $\widetilde g$ is a Lorentzian metric on $\{V>0\} \times \R$, in particular on the entire $\widetilde M$ in the assumptions of Theorem \ref{main} for $E=|P|$ (that we showed to imply $V>0$ on $M$). One readily checks the following properties:
\begin{itemize}
\item[-] The frame 
$$
\widetilde e_0 = \frac{1}{V} \big( \partial_u + W^ie_i), \qquad \widetilde e_i = e_i
$$
is orthonormal for $\widetilde g$ ($e_j$ being the field on $\widetilde M$ tangent to fibers $\{u = \mathrm{const}\}$ and projecting to $e_j$), dual to the coframe 
	\[
	\widetilde \theta^0 = V du, \qquad \widetilde \theta^i = \theta^i - W^i du.
	\]
In particular, $\widetilde e_0$ a timelike, unit normal field to each slice $\{u = \mathrm{const}\}$. 
	\item[-] The forms
$$
\left\{ \begin{array}{l}
\widetilde \omega^j_0 = \widetilde \omega^0_j = \frac{V_j}{V} \widetilde \theta^0 + \frac{(\mathscr{L}_W g)_{jk}}{2V} \widetilde \theta^k \\[0.2cm]
\widetilde \omega^i_j = \omega^i_j + \frac{W^i_j - W^j_i}{2V} \widetilde \theta^0
\end{array}\right.
$$
satisfy the structure equations
$$
\left\{ \begin{array}{l}
d \widetilde \theta^\alpha = -\widetilde \omega^\alpha_\beta \wedge \widetilde \theta^\beta \\[0.2cm]
\widetilde \omega^i_j = - \widetilde \omega^j_i, \quad \widetilde \omega^j_0 = \widetilde \omega^0_j, 
\end{array}\right.
$$
and are therefore the connection forms of the Levi-Civita connection $\widetilde \nabla$ of $\widetilde g$. When \eqref{eq_VW} holds,
\begin{equation}\label{conn_form}
\left\{ \begin{array}{l}
\widetilde \omega^j_0 = \widetilde \omega^0_j = \frac{V_j}{V} \widetilde \theta^0 + h_{jk} \widetilde \theta^k = h_{jk} \theta^k\\[0.2cm]
\widetilde \omega^i_j = \omega^i_j.
\end{array}\right.
\end{equation}
\item[-] The second fundamental form of each slice in the $\widetilde e_0$ direction is $h_{ij} \theta^i \otimes \theta^j$, and the field $\partial_u$ is $\widetilde \nabla$-parallel. 
\item[-] The components $\widetilde R_{\alpha\beta\gamma\delta}$ of the curvature tensor
$\widetilde R$ of $\widetilde g$ satisfy 
\begin{equation}\label{curv_Kd}
\begin{array}{lcl}
\widetilde R_{ijkt} & = & \disp R_{ijkt} + h_{ik}h_{jt} - h_{it}h_{jk}, \\[0.3cm]
\widetilde R_{ijk0} & = & \big[ R_{ijkt} + h_{ik}h_{jt}-h_{it}h_{jk}\big] \frac{W^t}{V} = h_{kj,i}-h_{ki,j} \\[0.3cm]
\widetilde R_{i0k0} & = & \disp \big( h_{it,k} - h_{ik,t} \big)\frac{W^t}{V}. 
\end{array}
\end{equation}
\end{itemize}
Here, we use the agreement
	$$
	\begin{array}{l}
	\disp \widetilde R(\widetilde e_\alpha,\widetilde e_\beta)\widetilde e_\gamma = \widetilde \nabla_{\widetilde e_\alpha} \widetilde \nabla_{\widetilde e_\beta} \widetilde e_\gamma - \widetilde \nabla_{\widetilde e_\beta} \widetilde \nabla_{\widetilde e_\alpha} \widetilde e_\gamma - \widetilde \nabla_{[\widetilde e_\alpha, \widetilde e_\beta]}\widetilde e_\gamma = R^\delta_{\gamma \alpha \beta} \widetilde e_\delta \\[0.2cm]
	\widetilde R_{\delta \gamma \alpha\beta} = \widetilde g\big( \widetilde R(\widetilde e_\alpha,\widetilde e_\beta) \widetilde e_\gamma, \widetilde e_\delta \big).
	\end{array}
	$$
In the setting of Theorem \ref{main}, for $E = |P|$, the immersion $\widetilde \Sigma : = \Sigma \times \R \hookrightarrow \widetilde M$ is totally geodesic: indeed, by Claim 2 and \eqref{conn_form},
	\[
	\langle \widetilde \nabla_{\widetilde e_A} \widetilde e_B, \widetilde e_n \rangle = b_{AB} = 0, \quad \langle \widetilde \nabla_{\widetilde e_A} \widetilde e_0, \widetilde e_n \rangle = h_{An} = 0, 
	\]
and 
	\[
	\langle \widetilde \nabla_{\widetilde e_0} \widetilde e_0, \widetilde e_n \rangle = \langle \frac{V_j}{V} \widetilde e_j, \widetilde e_n \rangle = \frac{V_n}{V} = 0.
	\]

\section*{Funding}
This work was supported by the Coordena\c{c}\~ao de Aperfei\c{c}oamento de Pessoal de N\'ivel Superior [88881.169802/2018-01 to S. Almaraz], 
the Conselho Nacional de Desenvolvimento Cient\'ifico e Tecnol\'ogico [309007/2016-0 to S. Almaraz, 312485/2018-2 to L. de Lima], 
the Funda\c{c}\~ao Cearense de Apoio ao Desenvolvimento Cient\'ifico e Tecnol\'ogico [FUNCAP/CNPq/PRONEX 00068.01.00/15 to S. Almaraz and L. de Lima], 
and the Scuola Normale Superiore [SNS17\_B\_MARI to L. Mari].

\noindent \textbf{Acknowledgements.} This work was carried out during the first author's visit to Princeton University in the academic year 2018-2019. He would like to express his deep gratitude to  Prof. Fernando Marques and the Mathematics Department.
The three authors thank the anonymous referees for their interesting comments, which helped us to improve the presentation.

\end{document}